\newtheorem{anyprop}{Anyprop}[section]
\newtheorem{theorem}[anyprop]{Theorem}
\newtheorem{lemma}[anyprop]{Lemma}
\newtheorem{prop}[anyprop]{Proposition}
\newtheorem{corollary}[anyprop]{Corollary}
\newtheorem{condition}[anyprop]{(FC)}
\newtheorem*{resulta}{\textbf{Theorem A}}  
\newtheorem*{resultb}{\textbf{Theorem B}}  
\newtheorem*{resultc}{\textbf{Corollary C}}
\newtheorem*{resultd}{\textbf{Theorem D}}
\newtheorem*{resulte}{\textbf{Proposition E}}
\theoremstyle{definition}
\newtheorem{definition}[anyprop]{Definition}
\newtheorem{example}[anyprop]{Example}
\newtheorem{counterexample}[anyprop]{Counterexample}
\newtheorem{construction}[anyprop]{Construction}
\newtheorem{remark}[anyprop]{Remark}
\newtheorem*{acknowledgement}{Acknowledgement}
\theoremstyle{remark}
\numberwithin{equation}{section}
\newcommand{\isoto}{\overset{\sim}{\to}}
\newcommand{\isotomap}[1]{\underset{\sim}{\overset{#1}{\to}}}
\newcommand{\isoeq}{\cong}
\newcommand{\lie}{\mathrm{Lie}~}
\newcommand{\im}{\mathrm{Im}~}
\newcommand{\nato}{\mathbb{N}_0}
\newcommand{\nat}{\mathbb{N}}
\newcommand{\Dim}[1]{\underset{#1}{\mathrm{dim}}~}
\newcommand{\lgth}{\mathrm{lg}~}
\newcommand{\rank}[1]{\underset{#1}{\mathrm{rank}}~}
\newcommand{\zz}{\mathbb{Z}}
\newcommand{\zd}{\mathcal{Z}}
\newcommand{\ez}{E_{\mathcal{Z}}}
\newcommand{\ezchi}{E_{G, \chi}}
\newcommand{\fq}{\mathbb{F}_{q}}
\newcommand{\fp}{\mathbb{F}_{p}}
\newcommand{\qp}{\mathbb{Q}_{p}}
\newcommand{\os}{\mathcal{O}_{S}}
\newcommand{\ofun}[1]{\mathcal{O}_{#1}}
\newcommand{\Spec}{\mathrm{Spec}~}
\newcommand{\neured}[1]{{#1}_{\mathrm{red}}^{0}}
\newcommand{\red}[1]{ ({#1})_{\mathrm{red}}}
\newcommand{\intt}[1]{\mathrm{int}\left ( #1 \right )}
\newcommand{\multgrp}{\mathbb{G}_{m,k}}
\newcommand{\GLn}{\mathrm{GL}_{n,k}}
\newcommand{\GL}[1]{\mathrm{GL}_{#1}}
\newcommand{\aff}[1]{\mathbb{A}_{#1}}
\newcommand{\ezn}{E_{\mathcal{Z}_{n}}}
\newcommand{\urad}{\mathcal{R}_{u}}
\newcommand{\conj}[2]{\leftidx{^{#1}}{#2}{}}
\newcommand{\fiber}[3]{#1 \underset{#3}{\times} #2}
\newcommand{\ten}[3]{#1 \underset{#3}{\otimes} #2}
\newcommand{\orb}{\mathfrak{o}} 
\newcommand{\hdr}[2]{H_{\mathrm{dR}}^{#1}(#2)}
\newcommand{\kd}[3]{\Omega^{#1}_{#2/#3}}
\newcommand{\hdss}[2]{\leftidx{_{H}}{E_{1}^{#1#2}}{}}
\newcommand{\hdssr}[3]{\leftidx{_{H}}{E_{#1}^{#2#3}}{}}
\newcommand{\hdssinf}[2]{\leftidx{_{H}}{E_{\infty}^{#1#2}}{}}
\newcommand{\conjss}[2]{\leftidx{_{conj}}{E_{2}^{#1#2}}{}}
\newcommand{\conjssinf}[2]{ \leftidx{_{conj}}{E_{\infty}^{#1#2}}{}}
\newcommand{\Orb}{\mathfrak{O}}
\newcommand{\Fr}{\mathrm{Frob}}
\newcommand{\codim}[2]{\mathrm{codim}(#1,#2)}
\newcommand{\stab}[2]{\mathrm{Stab}_{#1}(#2)}
\newcommand{\pr}[2]{\pi_{#1}(#2)}
\newcommand{\qstack}[2]{\left[#1 \backslash #2 \right]}
\newcommand{\btsm}{\mathcal{BT}^{n,d}_{m}}
\newcommand{\Frq}[1]{#1^{(q)}}
\newcommand{\Frqb}[1]{\left(#1\right)^{(q)}}
\newcommand{\Frp}[1]{#1^{(p)}}
\newcommand{\Frpb}[1]{\left(#1\right)^{(p)}}
\newcommand{\gzipdata}[2]{(#1, #1_{+}, #1_{-}, #2)}
\newcommand{\gzipcat}{G\text{-}\mathrm{Zip}_{k}^{\chi}}
\newcommand{\glnzipcat}{\mathrm{GL}_{n}\text{-}\mathrm{Zip}_{k}^{\chi}}
\newcommand{\weili}{{}^IW} 
\newcommand{\iweilj}{{}^IW^{J}} 
\newcommand{\siw}{S_{\underline{I}}^{[w]}}
\newcommand{\siwcl}[1]{S_{\underline{I}}^{#1}}
\newcommand{\kscheme}{(\mathrm{\textbf{Sch}}/k)}
\newcommand{\catsch}{(\mathrm{\textbf{Sch}})}
\newcommand{\fzipdatanew}[2]{(#1,C^{\bullet},D_{\bullet}, #2_{\bullet})}
\newcommand{\fzipnew}[1]{\underline{#1}}
\newcommand{\tatezip}[1]{\underline{\mathbb{1}}(#1)}
\newcommand{\fzipdata}[2]{(\mathcal{#1},C^{\bullet},D_{\bullet}, #2_{\bullet})}
\newcommand{\gra}[1]{\mathrm{gr}^{D}_{#1}}
\newcommand{\grd}[1]{\mathrm{gr}_{C}^{#1}}
\newcommand{\fzip}[1]{\underline{\mathcal{#1}}}
\newcommand{\fzipcat}{F\text{-}\mathrm{Zip}}
\newcommand{\fzipcatn}{F\text{-}\mathrm{Zip}_{k}^{\underline{n}}}
\newcommand{\aut}[1]{\mathrm{Aut}\left( #1 \right)}
\newcommand{\autm}[1]{\underline{\mathrm{Aut}}\left( #1 \right)}
\newcommand{\isom}[2]{\underline{\mathrm{Iso}}\left( #1, #2 \right)}
\newcommand{\verein}[1]{\underset{#1}{\bigcup}}
\newcommand{\osum}[1]{\underset{#1}{\bigoplus}}
\newcommand{\nsum}[1]{\underset{#1}{\sum}}
\newcommand{\diedonne}{\mathbb{D}}
\newcommand{\mx}{\mathcal{M}(X)}
\newcommand{\grpk}{\mathcal{K}}
\newcommand{\grpi}{\mathcal{I}}
\newcommand{\gmodcat}[1]{{}_{\mathcal{#1}} \mathcal{M}}
\newcommand{\rmodart}[1]{#1\text{-}\mathrm{MOD_{fl}}}
\newcommand{\markme}[2]{\opt{nb}{\textcolor{red}{\textbf{#1}} \marginpar{\begin{flushleft}\textcolor{red}{\textbf{\textit{#2}}}\end{flushleft}}}} 
\newcommand{\basechg}[2]{(#1)_{#2}}
\begin{document} 

\definecolor{drkgreen}{rgb}{0,0.3,0}
\definecolor{drkblue}{rgb}{0.1,0,0.4}

\hypersetup{linkcolor=drkblue, citecolor=drkgreen}
\title[Purity of $G$-zips]
{Purity of $G$-zips}

\author[Yaroslav Yatsyshyn]{Yaroslav Yatsyshyn}
\thanks{This work was partially supported by the German Research Foundation (DFG)}


\email{yatsyshyn@gmx.de}




%
%
%
%
%
%
%
%
%
%
%
%

\maketitle

\makeatletter
\@setabstract
\makeatother
\renewcommand{\abstractname}{Abstract}
\begin{abstract}
Let $k$ be a perfect field of characteristic $p>0$, and $S$ an scheme over $k$.
 An $F$-zip is basically  a locally free $\os$-module of finite rank endowed with two filtration and an Frobenius-linear isomorphism between their graded pieces.
  The natural generalization of this notion for a reductive algebraic group  $G/k$ is an ``$F$-zip with $G$-structure'', a so-called $G$-zip introduced in \cite{fzips}.
A $G$-zip $\underline{I}$ over $S$ yields the stratification of the base scheme $S= \verein{w} \siw$ in loci, where  $\underline{I}$ has  locally a constant isomorphism class for the fppf topology.
We show that $\siw \hookrightarrow S$ are affine and give a number of geometric applications of this purity result.
 \end{abstract}
\makeatletter
\@setabstract
\makeatother

\makeatletter
\def\Links{\tagsleft@true}\def\Rechts{\tagsleft@false}
\makeatother 
\Rechts
\section*{Introduction}
\subsection*{Background and motivation}
\sloppy
Let $k=\fp$ for the sake of simplicity unless stated otherwise.

Giving a short historical account of purity problems in the algebraic and arithmetic geometry one should mention the  Purity Theorem (2000) of de Jong-Oort \cite{oort}:
 \begin{theorem} \label{jo}
 Let $S$ be an integral, excellent scheme in characteristic $p$. Let $\mathfrak{X} \to S$ be a Barsotti-Tate group over $S$. 
Further let $U \subset S$ be the largest (open dense) \footnote{these properties are automatically satisfied for a such set, see \cite[Thm. 2.3.1, p. 143]{katzsl}} set
 on which the Newton polygon is constant. Then, either $U=S$, or  $S-U$ has codimension one in $S$. 
 \end{theorem}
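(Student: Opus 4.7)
The plan is to translate the statement into a question about $F$-crystals and exploit an extension principle for reflexive sheaves across codimension-two subsets of a normal excellent scheme.

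\textbf{Reduction.} The assertion is local on $S$, so I may assume $S = \Spec R$ with $R$ a local excellent domain. By excellence the normalization $\tilde{R}$ of $R$ is finite over $R$, and both the constancy of the Newton polygon and the codimension of the jumping locus are preserved under the finite surjection $\Spec \tilde{R}\to\Spec R$; hence I may further assume $R$ is normal. Set $Z := S\setminus U$ and assume toward a contradiction that $\codim{Z}{S}\geq 2$ while $Z\neq\emptyset$.

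\textbf{Dieudonn\'e translation.} Via crystalline Dieudonn\'e theory (Grothendieck--Messing, or equivalently Zink's theory of displays), $\mathfrak{X}$ corresponds to an $F$-crystal $(M,F,\nabla)$ on $S$, with the Newton polygon at a geometric point $\bar{s}$ given by that of the $F$-isocrystal $M_{\bar{s}}\otimes\qp$. Since the Newton polygon is constant on $U$ with slopes $\lambda_1<\cdots<\lambda_r$, the Grothendieck--Katz slope filtration theorem provides a canonical filtration $0=V_0\subset V_1\subset\cdots\subset V_r=M|_U\otimes\qp$ by sub-isocrystals with $V_i/V_{i-1}$ isoclinic of slope $\lambda_i$.

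\textbf{Extension across $Z$.} Saturating each $V_i$ inside $M|_U$ produces a coherent Frobenius-stable subsheaf $N_i\subset M|_U$. Writing $j:U\hookrightarrow S$ for the inclusion, normality of $R$ together with $\codim{Z}{S}\geq 2$ forces $j_*\ofun{U} = \os$ and, more generally, yields each $j_*(N_i)$ as a reflexive coherent subsheaf of $j_*(M|_U)=M$. The Frobenius and connection on the $N_i$ extend to $j_*(N_i)$ since morphisms between reflexive sheaves are determined on a dense open. Evaluating the extended filtration at the closed point $s\in S$ produces a decomposition of $M_{\bar{s}}\otimes\qp$ whose associated slopes are precisely $\lambda_1,\dots,\lambda_r$, contradicting $s\notin U$.

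\textbf{Main obstacle.} The delicate point is not the bare existence of the extended filtration but verifying that it really computes the same Newton polygon at $s$: a priori, reflexive saturation along $Z$ could shift slopes or introduce spurious ones. Controlling this requires a careful analysis of the interaction between the extension and the full crystal structure (divided-power connection included), which in the original de Jong--Oort argument is accomplished by reducing to two-dimensional complete local rings and invoking Mazur's inequality to force equality of the Hodge and Newton polygons on the extension. This is where the bulk of the technical labour of the proof is concentrated.
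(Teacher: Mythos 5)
This theorem is not proved in the paper at all: it is quoted as background from de Jong--Oort \cite{oort}, and the route the paper actually advocates (following Vasiu) is to establish the \emph{stronger} statement that $U \hookrightarrow S$ is an affine open immersion and then deduce the codimension-one assertion by the Hartogs-type argument of Lemma \ref{pureimm}. Your proposal instead attacks the weak form directly, in the spirit of the original de Jong--Oort argument, so there is no proof in the paper to compare it with step by step; it has to stand on its own.

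As it stands it does not. The reductions and the Dieudonn\'e/slope-filtration translation are fine, and the extension of the saturated Frobenius-stable subsheaves $N_i$ to reflexive coherent sheaves $j_*(N_i)$ across the codimension-$\geq 2$ set $Z$ is standard over a normal excellent base. But the concluding sentence of your ``Extension across $Z$'' step --- that evaluating the extended filtration at the closed point produces a decomposition of $M_{\bar{s}}\otimes\qp$ with slopes exactly $\lambda_1,\dots,\lambda_r$ --- is precisely the assertion of the theorem, not a consequence of what precedes it. A Frobenius-stable reflexive subsheaf whose generic fibre is isoclinic of slope $\lambda_i$ can perfectly well have a special fibre on which Frobenius degenerates and the Newton polygon lies strictly above the isoclinic one; Grothendieck's specialization theorem only guarantees that polygons go \emph{up} under specialization, and nothing in the reflexive-extension formalism by itself rules out exactly such a jump at $s$. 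You acknowledge this in the ``Main obstacle'' paragraph and then defer the entire verification to ``the original de Jong--Oort argument'' (reduction to two-dimensional complete local rings, Mazur's inequality, and so on). That deferred step is the whole content of the theorem, so what you have written is an accurate road map of the known strategy rather than a proof: the gap is genuine and sits exactly where you located it.
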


Let us remark that  one could require some other regularity/finiteness conditions instead of ``excellence'' of $S$.

This kind of result is referred by  A. Vasiu in \cite{vasiu2} as ``the weaker variant of purity''. 
 In fact, he shows a stronger version of the above theorem which implies the de Jong-Oort's result by applying the standard Hartogs-like yoga:

 \begin{theorem}
 Let $\mathfrak{X}$, $U \subset S$ are as in Theorem \ref{jo}.
 Then the open inclusion $U \hookrightarrow S$ is affine. 
 \end{theorem}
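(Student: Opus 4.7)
The plan is to prove that $U \hookrightarrow S$ is affine by constructing, locally on $S$, explicit sections of line bundles whose principal non-vanishing opens cover $U$ and are affine. Since affineness of a morphism is local on the target, one may replace $S$ by an affine open and assume $S = \Spec R$ for a noetherian integral $k$-algebra $R$. By upper semicontinuity of the Newton polygon (Grothendieck--Katz) the subset $U$ is already known to be open; the content of the theorem is to realize the closed complement as being cut out, locally, by enough regular functions so that $U$ becomes affine in $S$.

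The core construction is of \emph{generalized Hasse invariants}. Let $\nu_{0}$ be the constant Newton polygon on $U$. For each Newton polygon $\nu > \nu_{0}$ lying between $\nu_{0}$ and the Hodge polygon of $\mathfrak{X}$, I would produce a section $h_{\nu}$ of a suitable line bundle on $S$ whose vanishing locus is the stratum where the Newton polygon dominates $\nu$. The starting point is the contravariant Dieudonn\'e crystal $\mathbb{D}(\mathfrak{X})$: after inverting $p$, its $F$-isocrystal admits a slope filtration at every point of $U$, and the obstruction to extending this filtration as a filtration by sub-$F$-crystals over all of $S$ is measured by a determinant of a Frobenius-twisted map between integral lattices representing successive graded pieces. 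This determinant would be the required $h_{\nu}$. Since finitely many Newton polygons lie between $\nu_{0}$ and the Hodge polygon (a classical boundedness fact), only finitely many such invariants are needed, and the joint non-vanishing of the $h_{\nu}$ identifies $U$ with the intersection of finitely many principal affine opens in $S$, hence an affinely embedded subscheme. By gluing, affineness of the immersion follows for general $S$.

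The main obstacle is the construction of the invariants $h_{\nu}$ in families. Generically the slope filtration exists, but extending the slope sub-objects to integral sub-crystals over $R$ requires a boundedness principle controlling denominators in Frobenius-twisted morphisms, and one must then identify the resulting determinants with genuine sections of line bundles rather than of higher-rank coherent sheaves. A secondary difficulty is verifying that $h_{\nu}$ has no spurious zeros outside the intended stratum, so that joint non-vanishing really characterizes $U$ set-theoretically. Once $U \hookrightarrow S$ is established to be affine, the original de Jong--Oort purity theorem follows by the standard Hartogs-type principle, namely that an open affine subscheme of a noetherian scheme has complement of pure codimension one (or else is the whole scheme).
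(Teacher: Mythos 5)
This statement is quoted in the paper's introduction as a known result of Vasiu (with an alternative proof by Oort); the paper itself gives no proof of it, and the paper's own purity machinery (Theorem A: affineness of $E_{\mathcal{Z}}$-orbits via exactness of unipotent subgroups and the Lang--Steinberg theorem) cannot be transported to this setting, since the Newton stratification is not the orbit stratification of a group action with finitely many orbits. So your proposal has to be judged on its own, and as it stands it has a genuine gap rather than being a complete alternative proof.

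The gap is precisely the construction of the sections $h_{\nu}$, which you flag as ``the main obstacle'' but do not resolve; unfortunately that construction \emph{is} the theorem. Concretely: (a) the slope filtration of the $F$-isocrystal over the generic point does not extend to a filtration by integral sub-$F$-crystals over $S$ (this failure is exactly what the non-open Newton strata record), so there are no ``integral lattices representing successive graded pieces'' over all of $S$ whose Frobenius-twisted comparison map you could take a determinant of. (b) The natural substitute, Katz's sharp slope estimates applied to $\wedge^{i}F^{n}$, produces a morphism $\wedge^{i}\mathbb{D}(\mathfrak{X})^{(p^{n})}\to\wedge^{i}\mathbb{D}(\mathfrak{X})$ between two \emph{different} locally free sheaves, i.e.\ a section of a higher-rank Hom-sheaf, not of a line bundle; its vanishing locus is then cut out by many equations and gives no affineness. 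Extracting from it a section of an honest line bundle whose zero locus is set-theoretically the closed stratum $\{\mathrm{NP}\geq\nu\}$, with a power $n$ of Frobenius that works \emph{uniformly} over $S$ and with control of the $p$-adic denominators, is exactly Vasiu's crystalline boundedness principle -- the main theorem of the paper you would be reproving -- together with a nontrivial argument excluding spurious zeros. Your ``secondary difficulty'' is likewise not secondary: without it, joint non-vanishing of the $h_{\nu}$ only contains $U$ and need not equal it. The surrounding bookkeeping in your write-up is fine (affineness is local on the target; there are finitely many Newton polygons between $\nu_{0}$ and the Hodge polygon; $\bigcap_{\nu}D(h_{\nu})=D(\prod_{\nu}h_{\nu})$ is affine; and the deduction of de Jong--Oort from affineness is the Hartogs-type statement the paper proves as Lemma \ref{pureimm}), but the load-bearing step is missing, so this is a strategy outline rather than a proof.
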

Afterwards F. Oort gave an alternative proof of the above theorem in his conference talk (see \cite{repurity}) similar in flavor to that of  A. Vasiu.
 
The authors of \cite{vasiu} consider another purity problem for Barsotti-Tate groups: 
Pick $m \in \nat$, and let $S$ an arbitrary scheme over $k$.
Let $\mathfrak{X}_m$ be an $m$-truncated Barsotti-Tate group over $S$. 
Further let  $S_{X'}^m$ be the subscheme of $S$ that describes the locus where the  $\mathfrak{X}_m$ is locally 
for the fppf topology isomorphic to $\fiber{X'}{S}{k}$, where $X'$ is an $m$-truncated  Barsotti-Tate group over $k$.
As shown in \textit{loc.cit.}  the assertion $S_{X'}^m \hookrightarrow S$ affine holds for all primes $p \geq 5$, and 
 under some strong conditions on $X'$ it holds also  
for $p \in \{2,3\}$.  One should mention  that the core of the proof is based on the case $m=1$; this case readily implies the case $m>1$. 
For $m=1$ this purity result is equivalent to purity for a special class of $F$-zips, see below for an informal introduction to them. 
 
Another motivation for this work comes from  the fact that some data of geometrical origin, e.g., de Rham cohomology groups of certain projective varieties,
 has a structure of a so-called $F$-zip with maybe some additional structures.
The notion of an $F$-zip was first introduced in \cite{ben}.  Its authors B. Moonen and T. Wedhorn studied the de Rham cohomology $\hdr{n}{X/S}$ of a smooth proper scheme $f\colon X  \to S$. 
They showed  that under assumption of the so-called condition (D) which  says:  i) the higher direct images $R^{a}f_{*} \kd{b}{X}{S}$ for $a,b \in \nato$ are locally free $\os$- modules of  finite rank, and 
ii) the Hodge-de Rham spectral sequence degenerates at $E_1$, follows
that $M:=\hdr{n}{X/S}$ carries a structure of an $F$-zip, i.e.  $M$ is endowed with  two filtrations (``Hodge'' and ``conjugate'' filtration), and there is a Frobenius-linear morphism between their graded pieces 
induced by the Cartier isomorphism.
For a general reductive algebraic group $G$,  R. Pink, T. Wedhorn,  P. Ziegler  defined in  \cite{fzips}  the notion of an $F$-zip with $G$-structure, called a \textit{$G$-zip} (see Definition \ref{gzps}). 
These additional structures arise naturally:  For instance assume that $f\colon X \to S$ is of pure dimension $d$ with geometrically connected fibers satisfying  condition (D). Then the cup pairing on
the ``middle'' de Rham cohomology group $\hdr{d}{X/S}$ gives rise to a symplectic (resp. a symmetric) pairing for $d$ odd (resp. even).
In this case one obtains a $G$-zip, where 
$G = \mathrm{CSp}_{h,k}$ (resp. $G = \mathrm{CO}_{h,k}$), which
is the  group of symplectic simultudes (resp. of orthogonal simultudes) for $h = \rank{\os}\hdr{d}{X/S}$, see \cite[§8]{fzips}. Another example are $F$-zips with additional structures 
associated to abelian varieties with certain extra data determined by a Shimura PEL-datum, see \cite{eva}. 
   
A $G$-zip over $S$ yields a stratification of a base scheme $S$ in similar fashion as explained above in case of an $m$-truncated Barsotti-Tate group over $S$. 
In its turn, giving an $F$-zip of rank $n$ is equivalent to giving an $\GLn$-zip.

In case of  $S=\Spec k$ specifying these two filtrations for an $F$-zip is equivalent to giving two opposite parabolic subgroups of $\GLn$, and a Frobenius-linear map  between their Levi-factors. 
The generalization thereof leads to a concept of \textit{algebraic zip datum} introduced in \cite{paul}, which is a quadruple $\mathcal{Z}=(G,P,P',\varphi)$, where $G$ is a reductive algebraic group over $k$,
$P$ and $P'$ are parabolic subgroups with unipotent radicals $R_u P$ resp.  $R_u P'$, and an isogeny $\varphi\colon P/R_u P \to P'/R_u P'$. 

One could ask whether $(G, P, P', \varphi)$ and $(G, P, P', \psi)$ define the same algebraic zip datum up to a change of basis. 
To tackle this problem one defines an action of the associated zip group $\ez =\{ (p',p) \in P' \times P: \varphi([p'])= [p]\}$ on $G$ given by  
$((p',p),g) \mapsto p'gp^{-1} \blacklozenge $. 

The elements $g$ and $g' \in G$ lie on the same orbit whenever they correspond the same $\varphi$ up to a change of basis. 

Let us remark that the notion of (non-connected) algebraic zip datum considered in \cite{fzips} has a more general setting as above 
with $G$, $P$ and $P'$  playing a r\^{o}le of  the neutral components of some, in general non-connected, algebraic groups. 
But the purity problem considered here can be reduced to the case of connected algebraic zip datum, see also Remark \ref{czdred}, hence
we limit ourself to study the connected version.  

A crucial  r\^{o}le in \cite{fzips} as well as in this  paper plays the algebraic stack $\qstack{\ez}{G}$, which is the quotient stack  with respect to the above action.

A $G$-zip is an object over $S$ that locally for \'{e}tale topology looks like a family of algebraic zip data parametrized by some section $g \in G(S)$ (see \cite[Lemma 3.5.]{fzips}). 
It turns out that their classifying stack is isomorphic to $\qstack{\ez}{G}$.

\subsection*{Results}
Let $k$ here be  a perfect field containing $\fp$, and $S$ be a $k$-scheme. 
 
Basically in this paper, we prove the following purity result and give several applications:

\begin{resulta} \label{athm}
Let $k$ be an algebraically closed field.  Suppose that $G$ contains  a finite number of $\ez$-orbits with respect to the action $\blacklozenge$  on it.
Then  $\ez$ acts on $G$ with affine orbits.
\end{resulta}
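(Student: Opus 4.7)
The plan is to establish affineness of each $E_{\mathcal{Z}}$-orbit by realizing it as an iterated affine-space bundle over a Schubert-type cell, using the explicit orbit parametrization of Pink--Wedhorn--Ziegler together with the Levi decomposition of the zip group.

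First, I would fix an orbit $\mathcal{O} = E_{\mathcal{Z}} \cdot g \subset G$. By the finiteness hypothesis and the standard theory of algebraic-group actions, $\mathcal{O}$ is a smooth, locally closed subvariety of $G$; since $G$ is affine, the closure $\overline{\mathcal{O}}$ is itself affine, and so $\mathcal{O}$ is at worst quasi-affine. The goal thus becomes to upgrade this quasi-affineness into genuine affineness, either by exhibiting $\overline{\mathcal{O}} \setminus \mathcal{O}$ as an effective Cartier divisor in $\overline{\mathcal{O}}$, or --- the approach I prefer --- by constructing an explicit affine-bundle structure on $\mathcal{O}$.

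The key tool is the Levi decomposition $E_{\mathcal{Z}} = L_{\mathcal{Z}} \ltimes U_{\mathcal{Z}}$, where $U_{\mathcal{Z}} = R_{u}(E_{\mathcal{Z}})$ is the unipotent radical. Since unipotent-group actions on affine varieties have closed orbits isomorphic to affine spaces, the $U_{\mathcal{Z}}$-sub-orbits of $\mathcal{O}$ are affine. This yields a fibration $\mathcal{O} \to U_{\mathcal{Z}} \backslash \mathcal{O}$ with affine-space fibers, and the quotient carries a residual action of $L_{\mathcal{Z}}$. Invoking the Pink--Wedhorn--Ziegler parametrization of orbits (indexed by a subset ${}^{I}W$ of the Weyl group), the quotient $U_{\mathcal{Z}} \backslash \mathcal{O}$ should be identifiable with an $L_{\mathcal{Z}}$-orbit on a generalized flag variety --- concretely, a Schubert cell --- which is affine by the classical Bruhat decomposition. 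Combining the two layers exhibits $\mathcal{O}$ as a Zariski-locally trivial affine-space bundle over an affine base, and therefore affine.

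The main obstacle is making the identification of $U_{\mathcal{Z}} \backslash \mathcal{O}$ with a Schubert cell fully precise. The Frobenius-twisted isogeny $\varphi$ complicates the usual root-subgroup calculations, because the matching between the $P$-side and the $P'$-side of the action is mediated not by a classical isomorphism of Levi quotients but by $\varphi$, which need not preserve the Bruhat stratification in the naive way. I expect most of the technical work to consist of careful bookkeeping with root subgroups, Bruhat coordinates on $G$, and the way $\varphi$ intertwines the unipotent radicals of $P$ and $P'$; once this bookkeeping is in place, the affineness of orbits should follow formally from the two-step fibration described above.
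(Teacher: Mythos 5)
Your proposal has a genuine gap, and the clearest symptom is that it never invokes the finiteness hypothesis on the number of $\ez$-orbits. That hypothesis is not decorative: the paper's own counterexample (the zip datum $(\GL{2},\GL{2},\GL{2},\mathrm{Id})$, i.e.\ conjugation) produces a non-affine orbit, so any argument that establishes affineness without using finiteness must break somewhere. It breaks at the identification of $U_{\mathcal{Z}}\backslash\mathcal{O}$ with a Schubert cell. Writing $\ez\cong(U'\times U)\rtimes L'$, the quotient of an orbit by the unipotent radical is not an $L'$-orbit on a flag variety; it is an orbit of a \emph{twisted conjugation-type} action of a Levi on a smaller reductive group, mediated by $\varphi$ (this is exactly the reduction step of Pink--Wedhorn--Ziegler recalled in Remark \ref{redpr}). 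Such orbits are precisely where non-affineness can occur --- conjugacy classes in $\GL{2}$ again --- and proving them affine is the recursive heart of the theorem. In the paper this is handled by iterating the reduction until the zip datum has the form $(G,G,G,\varphi)$ and then using condition (FC) together with the Lang--Steinberg theorem (Corollary \ref{lsfin}) to force the terminal action to be transitive with finite stabilizers. Your two-step fibration collapses this whole induction into a single step with an incorrectly identified base.

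Two further points. First, even granting an affine base, a surjection with affine fibers onto an affine variety does not make the total space affine ($\mathbb{A}^2\setminus\{0\}\to\mathbb{A}^1$ is the standard example); you need the \emph{morphism} to be affine, e.g.\ via Zariski-local triviality, and since the $U_{\mathcal{Z}}$-stabilizers vary along $\mathcal{O}$ the map $\mathcal{O}\to U_{\mathcal{Z}}\backslash\mathcal{O}$ is not obviously a locally trivial bundle (nor is the existence of $U_{\mathcal{Z}}\backslash\mathcal{O}$ as a separated scheme automatic). The paper sidesteps both issues: it divides only by $U$ on one side, so that $G\to G/U$ is an honest affine morphism onto a quasi-projective homogeneous space, and then reduces affineness of the resulting $P'$-orbit to the purely group-theoretic statement that the reduced identity component of the stabilizer lies in $\urad P'$ (Proposition \ref{hsaffin}), which is what the induction, seeded by Lang--Steinberg at the terminal step, actually establishes.
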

 
The above Theorem  implies the following easy but important corollary:

\begin{resultb} \label{corB}
 Let $\underline{I}$ be a $G$-zip of over $S$. 
 $\underline{I}$ yields the finite decomposition $\label{decomB} S= \verein{w} \siw$ in loci, where $\underline{I}$ has  locally a constant isomorphism class for the fppf topology.
Then $\siw \overset{\jmath}{\hookrightarrow} S$ is affine.
\end{resultb}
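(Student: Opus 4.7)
The plan is to apply the classifying stack description of $G$-zips from \cite{fzips}: giving $\underline{I}$ over $S$ is equivalent to a morphism $\zeta\colon S \to \qstack{\ez}{G}$. Under this identification the stratification $S = \verein{w} \siw$ is the pullback along $\zeta$ of the stratification of $\qstack{\ez}{G}$ by the locally closed substacks $\qstack{\ez}{\orb_w}$, where $\orb_w \subset G$ is the $\ez$-orbit indexed by $w$. (Finiteness of the index set is automatic from the general theory of algebraic zip data of \cite{paul, fzips}, as recalled in the introduction.)

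Because $\jmath\colon \siw \hookrightarrow S$ is obtained by base change along $\zeta$ from the immersion $\qstack{\ez}{\orb_w} \hookrightarrow \qstack{\ez}{G}$, and affineness of a representable morphism of stacks is preserved under base change, it suffices to prove that this latter immersion is affine. The smooth atlas $G \to \qstack{\ez}{G}$ is an fppf $\ez$-torsor whose pullback of $\qstack{\ez}{\orb_w} \hookrightarrow \qstack{\ez}{G}$ is precisely the orbit inclusion $\orb_w \hookrightarrow G$. Since affineness is fppf-local on the target, it remains to show that every orbit inclusion $\orb_w \hookrightarrow G$ is affine.

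This is the content of Theorem A, but stated only when $k$ is algebraically closed. I would therefore pass to $\bar k$: by \cite{fzips} the number of $\ez_{\bar k}$-orbits in $G_{\bar k}$ remains finite, so Theorem A yields that each $\orb_{w,\bar k} \hookrightarrow G_{\bar k}$ is affine. Since $\Spec \bar k \to \Spec k$ is faithfully flat and affineness of a morphism descends under fpqc base change, the orbit inclusions $\orb_w \hookrightarrow G$ defined over $k$ are themselves affine, and pulling back along $\zeta$ completes the argument.

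All the genuine difficulty is absorbed into Theorem A; the derivation here is a formal exercise in stack-theoretic descent. The only point that deserves care is the scheme structure on $\siw$: one must check that it coincides with the scheme-theoretic preimage of $\qstack{\ez}{\orb_w}$ under $\zeta$ and not merely with its reduced/set-theoretic version, so that the fppf-local replacement of $\jmath$ by the orbit inclusion $\orb_w \hookrightarrow G$ is actually valid.
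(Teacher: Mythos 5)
Your argument is correct and follows essentially the same route as the paper: identify $\siw$ as the schematic preimage of the locally closed substack $[w]\hookrightarrow \qstack{\ezchi}{G}$ under $\zeta$, descend affineness of the orbit inclusion through the faithfully flat atlas $G\to\qstack{\ezchi}{G}$, and conclude by base change, with Theorem A carrying the real content. The only cosmetic difference is in handling non-closed $k$: the paper splits $\siw\otimes\bar k$ into the disjoint Galois-conjugate strata $\siwcl{w'}$, $w'\in\Gamma w$, before invoking Theorem A, whereas you descend affineness of the orbit inclusions directly along $\Spec\bar k\to\Spec k$ — both are fine.
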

Check section \ref{pure_strat} for more details about the index set of the above decomposition.

In its turn, the above strong purity result implies the following weak purity result:
\begin{resultc} 
Suppose that  $S$ is a locally noetherian $k$-scheme, $Z$  a closed subscheme of $S$ of codimension $\geq 2$, which contains no embedded components of $S$
that the restriction of $\underline{I}$ to $S \setminus Z$ is fppf locally constant, 
then $\underline{I}$ is fppf locally constant.
\end{resultc}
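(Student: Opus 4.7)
The plan is to derive this weak purity statement from the strong purity of Theorem B together with the standard Hartogs--type principle for affine open immersions, mirroring exactly the way Vasiu's theorem implies that of de Jong--Oort, as explained in the introduction. First I would unpack the hypothesis: since $\underline{I}|_{S\setminus Z}$ is fppf locally constant, every geometric fibre of $\underline{I}$ over $S\setminus Z$ carries the same isomorphism class, so by the stratification appearing in Theorem B the whole open subscheme $S\setminus Z$ must be contained in a single stratum, say $\siwcl{[w_0]}$ (after an appropriate choice of the index $w_0$).

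Next I would show that $\siwcl{[w_0]}$ is itself open in $S$. Because $Z$ has codimension $\geq 2$ it contains no generic point of $S$, so $S\setminus Z$ is dense open. The stratum $\siwcl{[w_0]}$ is locally closed in $S$ and contains this dense open subset; writing $\siwcl{[w_0]} = U \cap \overline{\siwcl{[w_0]}}$ with $U$ open, the closure $\overline{\siwcl{[w_0]}}$ must then equal $S$, whence $\siwcl{[w_0]} = U$ is open. Setting $Z' := S \setminus \siwcl{[w_0]}$, we obtain a closed subset of $Z$; in particular $Z'$ still has codimension $\geq 2$ in $S$ and, by the hypothesis on $Z$, contains no embedded component of $S$.

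Finally I would invoke Theorem B for the open stratum $\siwcl{[w_0]}$ to conclude that the open immersion $\siwcl{[w_0]} \hookrightarrow S$ is affine. The Hartogs--style yoga then says that, for a locally noetherian scheme $S$, the complement of an affine open immersion is either empty or of pure codimension $1$, provided it contains no embedded component of $S$. Since $Z'$ has codimension $\geq 2$ everywhere and avoids the embedded associated points of $S$, this forces $Z' = \emptyset$. Therefore $\siwcl{[w_0]} = S$ and $\underline{I}$ is fppf locally constant on all of $S$.

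The one genuinely delicate step is the application of the Hartogs yoga; the ``no embedded components'' hypothesis on $Z$ is precisely what is needed so that $Z'$ inherits the same property and the pure codimension one conclusion applies. Once this input is granted, the rest of the argument is just a clean comparison between the codimension bound $\geq 2$ and the pure codimension $1$ conclusion.
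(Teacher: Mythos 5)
Your argument is correct and matches the paper's own proof: the paper likewise reduces to the affine immersion $\siw \hookrightarrow S$ furnished by Theorem~B and then applies Lemma~\ref{pureimm} (an affine immersion has complement of pure codimension one inside its schematic closure), with the no-embedded-components hypothesis serving exactly to guarantee that this schematic closure is all of $S$. The only cosmetic differences are that you additionally observe the relevant stratum is open, and that one should really argue one connected component of $S\setminus Z$ at a time, since fppf local constancy does not force a single global isomorphism class.
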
  
 
Next we give a short new proof of the main result in \cite{vasiu} about the purity of the stratification of a basis scheme $S$
based on the local isomorphism class of $\mathfrak{X}_m$ discarding all restrictions in characteristics $2$ and $3$:

\begin{resultd}
In the notation  of the previous section holds:
 The inclusion $S_{X'}^{m} \hookrightarrow S$ is affine.
\end{resultd}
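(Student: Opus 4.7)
The plan is to reduce to the case $m=1$, translate the resulting question into one about $\GL{h,k}$-zips, and apply Theorem~B.

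First, I would dispose of the reduction from general $m$ to $m=1$. Setting $Y':=X'[p]$, one has the obvious inclusion $S_{X'}^{m}\subseteq S_{Y'}^{1}$, and the argument of \cite{vasiu} showing that $S_{X'}^{m}\hookrightarrow S_{Y'}^{1}$ is affine is of a purely deformation-theoretic nature and carries over verbatim with no restriction on $p$; I would simply cite it. Composed with the affine inclusion $S_{Y'}^{1}\hookrightarrow S$ produced below, this yields $S_{X'}^{m}\hookrightarrow S$ affine.

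The heart of the proof is therefore the case $m=1$. Here I would invoke the equivalence between the stack of $\mathrm{BT}_{1}$ groups over $k$-schemes of fixed height $h$ and dimension $d$ and the stack $\GL{h,k}\text{-}\mathrm{Zip}_{k}^{\chi}$, for $\chi$ the cocharacter of type $(d,h-d)$. The underlying $F$-zip is produced by Moonen--Wedhorn \cite{ben}, and in the $G$-zip language of \cite{fzips} this upgrades to an equivalence of fppf stacks $\mathrm{BT}_{1}\isoto\GL{h,k}\text{-}\mathrm{Zip}_{k}^{\chi}$. Being an equivalence of fppf stacks it is compatible with pullbacks, so it translates fppf-local isomorphism into fppf-local isomorphism and identifies $S_{Y'}^{1}$ with the stratum $\siw$ attached to the image of $Y'$ under this equivalence. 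Theorem~B then gives at once that $\siw\hookrightarrow S$ is affine.

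The main obstacle is verifying that the $\mathrm{BT}_{1}$--$G$-zip dictionary really is an equivalence of fppf stacks over arbitrary $k$-schemes, so that the stratification by fppf-local isomorphism class of $\mathfrak{X}_{1}$ coincides on the nose with the $G$-zip stratification. The classical form of the equivalence is usually stated over perfect fields; the globalization is essentially the content of \cite{fzips}, but must be extracted carefully. Once this dictionary is in place, the characteristic-$2$ and characteristic-$3$ restrictions of \cite{vasiu} disappear, since the delicate local deformation-theoretic step at $m=1$ is replaced by an application of Theorem~B, which has no such constraints.
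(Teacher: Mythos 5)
Your skeleton --- handle $m=1$ by translating into $\GL{h}$-zips and applying Theorem~B, then reduce higher $m$ to $m=1$ --- is in fact the same as the paper's (Section~4.1 carries out precisely your level-$1$ argument via the Dieudonn\'{e} crystal of $X[1]$, and Proposition~\ref{afflevel} is the paper's own version of the $m\to 1$ reduction, adapted from \cite{vasiu}). The genuine gap sits at the step you yourself call ``the main obstacle'' and then dismiss: there is \emph{no} equivalence of fppf stacks between $\mathcal{BT}^{n,d}_{1}$ and $\glnzipcat$, and this is not ``essentially the content of \cite{fzips}''. What is actually available (Lau's theorem, quoted in the paper as \cite{lau}, Thm.~4.5, and the same phenomenon is already present at level $1$) is only a \emph{smooth} morphism $\varLambda\colon \btsm \to \mathrm{Disp}_{m,d}$ inducing an equivalence on \emph{geometric} points; the automorphism group schemes on the two sides need not agree over non-perfect bases, so $\varLambda$ is not an isomorphism of stacks. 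Hence you cannot simply ``translate fppf-local isomorphism into fppf-local isomorphism'' and identify $S_{X'}^{m}$ with a zip stratum by pullback. Bridging from ``smooth and bijective on geometric points'' to ``the stratum is the preimage of an affine orbit and affineness transfers'' is exactly the content of steps 1)--11) in Section~4.2 of the paper, which uses residue gerbes, the presentation $K_m\to\mathrm{Disp}_{m,d}$, and faithfully flat descent. That bridge is missing from your proof.

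On the reduction to $m=1$: the paper does not outsource $S_{X'}^{m}\hookrightarrow S_{Y'}^{1}$ to \cite{vasiu} but reproves the relevant statement at the level of orbits (Proposition~\ref{afflevel}): the reduced identity component of the level-$m$ stabilizer lies in the unipotent group $\mathcal{U}\cong\mathcal{N}\rtimes\neured{\grpi}$, and exactness is propagated from level $1$ to level $m$ via the slice criterion of Lemma~\ref{slice}. Your assertion that the argument of \cite{vasiu} for this step ``carries over verbatim with no restriction on $p$'' is plausible --- the restrictions at $p\in\{2,3\}$ in \emph{loc.\ cit.}\ do stem from the level-$1$ analysis --- but as written it is an unverified citation of precisely the portion of the proof that carries the remaining mathematical content beyond Theorem~B. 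Note also that the paper explicitly warns that the Dieudonn\'{e}-crystal/$F$-zip approach does not extend to $m\ge 2$, which is why it switches to truncated displays; your composite $S_{X'}^{m}\hookrightarrow S_{Y'}^{1}\hookrightarrow S$ avoids that only if the first inclusion is genuinely affine for all $p$, which you have not established.
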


Let now $X$ be a smooth proper scheme over $S$.
At the very end of the paper  we give some sufficient conditions and examples when de Rham cohomology  $\hdr{n}{X/S}$ carries the structure of $F$-zip making in particular
the purity result applicable in this case.

\begin{resulte}
Let $f\colon X \to S$ be a smooth proper morphism of schemes.
Suppose that there is a lift of $X$ in zero characteristic (see Definition \ref{liftchar0}), $\tilde{f}\colon \tilde{X} \to \tilde{S}$  such that 
$\tilde{X}$ and $\tilde{S}$ are locally noetherian schemes, $\tilde{f}$ is proper and smooth, and $\tilde{S}$ reduced. 

 Further assume the Hodge numbers $\tilde{s} \mapsto \Dim{\kappa(\tilde{s})} H^{b}(\tilde{X}_{\tilde{s}}, \kd{a}{\tilde{X}_{\tilde{s}}}{\kappa(\tilde{s})})$
are locally constant on $\tilde{S}$ for all $a,b \in \nato$.

Then $f$ satisfies condition (D).
\end{resulte}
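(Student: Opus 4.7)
The plan is to verify condition (D) on the lift $\tilde f\colon\tilde X\to\tilde S$ and then transport it to $f$ via the base change $S\hookrightarrow\tilde S$ supplied by the lift. The crucial observation is that the generic points of $\tilde S$ lie in characteristic zero, where degeneration of the Hodge--de Rham spectral sequence is classical; the remaining work is to propagate degeneration to all of $\tilde S$ and then pull everything back.

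First I would apply Grauert's theorem on $\tilde S$ to each $\kd{a}{\tilde X}{\tilde S}$ separately. Since $\tilde S$ is reduced and locally noetherian, $\tilde f$ is proper and smooth (so the relative differentials are locally free), and the Hodge numbers $\tilde s\mapsto h^{a,b}(\tilde X_{\tilde s})$ are locally constant by assumption, EGA III$_2$, 7.8.4 yields that every $R^{b}\tilde f_{*}\kd{a}{\tilde X}{\tilde S}$ is locally free of rank $h^{a,b}$ and that its formation commutes with arbitrary base change.

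Next, at every generic point $\eta$ of $\tilde S$ the residue field $\kappa(\eta)$ has characteristic zero, so classical Hodge theory (reducing to $\mathbb C$ by flat base change and invoking GAGA, or alternatively Deligne's algebraic proof) gives $E_{1}$-degeneration of the Hodge--de Rham sequence for $\tilde X_{\eta}/\kappa(\eta)$; in particular $\dim_{\kappa(\eta)}\hdr{n}{\tilde X_{\eta}}=\sum_{a+b=n}h^{a,b}(\tilde X_{\eta})$. To propagate this equality to all of $\tilde S$, I would invoke Grothendieck's upper semicontinuity theorem for the bounded complex $\kd{\bullet}{\tilde X}{\tilde S}$ of $\tilde S$-flat coherent sheaves: the function $\tilde s\mapsto\dim\hdr{n}{\tilde X_{\tilde s}}$ is upper semicontinuous on $\tilde S$. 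Combined with the universal inequality $\dim\hdr{n}{\tilde X_{\tilde s}}\le\sum_{a+b=n}h^{a,b}(\tilde X_{\tilde s})$ from the fiberwise Hodge--de Rham spectral sequence and the local constancy of the right-hand side, the equality at any generization $\eta$ of $\tilde s$ forces equality at $\tilde s$ itself. A second application of Grauert's theorem, this time to the complex $\kd{\bullet}{\tilde X}{\tilde S}$, then gives local freeness of $R^{n}\tilde f_{*}\kd{\bullet}{\tilde X}{\tilde S}$ of rank $\sum_{a+b=n}h^{a,b}$, together with base change compatibility.

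At this stage all $E_{1}$-terms of the Hodge--de Rham spectral sequence on $\tilde S$ are locally free and the abutment $R^{n}\tilde f_{*}\kd{\bullet}{\tilde X}{\tilde S}$ is locally free of total rank equal to the sum of the $E_{1}$-ranks, which forces degeneration at $E_{1}$ on $\tilde S$. Both properties -- local freeness of the Hodge sheaves and $E_{1}$-degeneration -- survive the pullback along the closed immersion $S\hookrightarrow\tilde S$ thanks to the base change compatibility recorded in the earlier steps, yielding (D) for $f$. I expect the main obstacle to be the careful application of ``Grauert for complexes'' (i.e.\ cohomology and base change in the derived sense) and making precise what ``lift in zero characteristic'' entails regarding the residue fields of the generic points of $\tilde S$; once these are in place, the degeneration argument is essentially formal.
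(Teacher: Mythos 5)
Your proposal is correct and follows essentially the same route as the paper: cohomology-and-base-change (Grauert/Mumford) for the Hodge sheaves using the reducedness of $\tilde S$ and the constancy of Hodge numbers, $E_1$-degeneration in characteristic zero on the generic part of $\tilde S$ (your generic points have residue characteristic $0$ precisely because $\tilde S$ is flat over $\zz_p$, matching the paper's dense open $\tilde S\times_{\zz_p}\Spec\qp$), upper semicontinuity of $\tilde s\mapsto\dim\hdr{n}{\tilde X_{\tilde s}}$ against the universal inequality $\dim\hdr{n}{\tilde X_{\tilde s}}\le\sum_{a+b=n}h^{a,b}$ to force equality everywhere, and then descent of (D) to $f$ by base change. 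The only differences are cosmetic — you close the degeneration step by a rank/fiberwise-vanishing argument over the reduced base where the paper counts lengths over Artinian local rings, and the map $S\to\tilde S$ need not be a closed immersion, though nothing in your argument uses that.
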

We also give examples of application of the last proposition.

\subsection*{Content}

This paper is organized as follows. Section 1 contains a short recollection of basic facts about algebraic zip datum, the associated
quotient stack, and $F$- and $G$-zips presented in \cite{fzips} and \cite{paul}.

Section 2  gives an insight in the geometry of the orbits in Theorem A, and  culminates in its proof.

In section 3 will be explained how  Theorem A implies purity results for the strata of Theorem B, and 
the week purity result Corollary C.

Section 4 outlines some applications of the purity results: In subsections 4.1 and 4.2 we concern us with the purity result of \cite{vasiu}, see Theorem D.   

Section 5 focuses on the  de Rham cohomology  $\hdr{n}{X/S}$ of a proper smooth variety over $S$,  and on conditions upon which it carries an $F$-zip structure.
It discusses also some examples. 

\begin{acknowledgement}
 I would like to express my deep gratitude to my PhD thesis advisor Torsten  Wedhorn. This work would not have been possible without his encouragement and support.
 He also owns my special thanks for the careful reading of this paper, his comments and corrections, 
 and for the patience treating my knowledge gaps in algebraic/arithmetic geometry and lack of expertise therein.

I am also grateful to Ralf Kasprowitz and Eike Lau for many helpful discussions and suggestions.
\end{acknowledgement}

\newpage

\section{Preliminaries: General notation and basic facts}

\subsection*{Algebraic zip datum}
Let $k$ be a field extension of a finite field $\fq$ of order $q$, which is a perfect field, and let $S$ be a scheme over $k$. 
We denote by $G$  a (connected) reductive quasi-split algebraic group over the field $k$, fix $T \subset G$  a maximal torus and
 $T \subset B \subset  G$ a Borel subgroup. Further let $P, P' \subset  G$  be  parabolic subgroups such that $B \subset P$ and $\conj{g_0}{B} \subset P'$ for some fixed element 
$g_0 \in G$.

 Denote by  $U$ and $U'$ the unipotent radicals of $P$ resp. $P'$ and by $L$ and $L'$ their unique Levi-factors verifying $T \subset L$ and  $\conj{g_0}{T} \subset L'$.
In this way, we obtain two canonical projections $\pi_{L}\colon P \to L$,   $\pi_{L'}\colon P' \to L'$. 

Furthermore, we restrict our attention to such pairs $(P',P)$, such there is  an isogeny  $\varphi\colon L' \to L$ satisfying the  constraints $\varphi(\conj{g_0}{B} \cap L')=B \cap L$  
and   $\varphi(\conj{g_0}{T})=T$. 

We recall the following central definition introduced in \cite{paul}:
\begin{definition}
1) A \textit{connected algebraic zip datum} \footnote{this definition was originally made in the case  of algebraically closed field $k$}  $\mathcal{Z}$ 
is a quadruple $\mathcal{Z}=(G,P,P',\varphi)$ as above.

2) The linear algebraic group $\ez$ over $k$  given by \begin{equation} \label{zipgrp} \ez =\{ (p',p) \in P' \times P: \varphi(\pr{L'}{p'})= \pr{L}{p} \} \end{equation}
is called  \textit{zip group associated  to $\mathcal{Z}$}. 
\end{definition}

The group $\ez$ acts on $G$ by: \begin{equation} \label{zipaction} ((p',p),g) \mapsto p'gp^{-1} \text{ for } (p,p') \in \ez, g \in G \end{equation}
Or, more explicitly, writing $P'=U' \rtimes L'= U' \cdot L'$ and $P= U \rtimes L= U \cdot L$, $p'=u'l'$, $p=ul$,  this action becomes:
 
\[((p',p),g) \mapsto u'l'g \varphi(l')^{-1}u^{-1}.\]

Moreover, we impose the following additional condition:

\begin{condition} \label{fc}
For an algebraic closure $\bar{k}$ of $k$  there is only a finite number of  $\ez(\bar{k})$-orbits of $G(\bar{k})$.
\end{condition}

We will see in the section \ref{aff}  that the condition (FC) is in particular fulfilled if $\lie \varphi=0$,  
but in fact the latter condition is too strong.

Throughout this paper we consider the algebraic quotient stack  $\qstack{\ez}{G}$. 

The geometric situation described in \cite{fzips} leads to some special kind of algebraic zip datum associated to a cocharacter $\chi\colon \multgrp \to G$.
We assume that the reductive algebraic group $G$ is defined over $\fq$ i.e. $G=G'_{k}$, where $G'$ is a reductive algebraic group over $\fq$. Let $L$ be the centralizer of $\chi$ in $G$.  
Then, there are two opposite parabolic subgroups $P_{\pm}=L \ltimes U_{\pm}$ with the common Levi factor $L$ and the unipotent radicals  $U_{\pm}$, where the 
Lie algebras $\mathfrak{u}_{\pm}$ are directs sums of positive resp. negative weight spaces in the Lie algebra $\mathfrak{g}$ under $\mathrm{Ad} \circ \chi$. 

We denote by  $ \Frqb{.}$ a pullback of a scheme or a sheaf under $q^{\mathrm{th}}$- power absolute Frobenius map $S \to S$ resp. $k \to k$.  

Clearly, we have  $\Frq{G}=G$. 

\begin{lemma} \label{choice}
Let $G$ be a reductive algebraic group over $k$ defined over $\fq$. Furthermore, let $P$ be a parabolic subgroup of $G$ and $L \subset P$ be a Levi subgroup. 
There exist a maximal torus $T$, a Borel subgroup $B$ of $G$ both already defined over $\fq$, and $\bar{g} \in G(k)$  such that $T \subset \conj{\bar{g}}{L}$ 
and $T \subset B \subset \conj{\bar{g}}{P}$.
\end{lemma}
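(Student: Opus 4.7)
The plan is to pick an $\fq$-rational Borel pair $(B,T)$ of $G$ first, and then use the fact that all Borel pairs in $G$ are conjugate to move this pair inside $(P,L)$ by some element $\bar g$, which will witness the inclusions $T\subset \conj{\bar g}{L}$ and $B\subset \conj{\bar g}{P}$. We therefore need three ingredients: (i) existence of $(B,T)$ over $\fq$, (ii) existence of \emph{some} Borel pair of $G$ adapted to $(P,L)$, and (iii) conjugacy of Borel pairs by elements of $G(k)$.

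For (i), since $G$ is connected reductive over the finite field $\fq$, a theorem of Steinberg guarantees that $G$ is quasi-split over $\fq$; in particular $G$ admits a Borel subgroup $B$ defined over $\fq$. The Borel $B$ is a connected solvable smooth $\fq$-group, so it splits as $B=T\ltimes R_{u}B$ over $\fq$ and we pick this Levi decomposition to produce an $\fq$-rational maximal torus $T\subset B$.

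For (ii), choose any maximal torus $T_L$ of $L$ over $k$; because $L$ is a Levi subgroup of $G$ the two groups have the same rank, so $T_L$ is also a maximal torus of $G$. Choose any Borel $B_L\subset L$ containing $T_L$, and set $B_P:=B_L\cdot R_{u}P$. By the standard dictionary between Borels of $L$ and Borels of $G$ contained in $P$, the subgroup $B_P$ is a Borel of $G$, lies inside $P$, and contains $T_L$. Thus $(B_P,T_L)$ is a Borel pair of $G$ with $T_L\subset L$ and $B_P\subset P$.

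For (iii), invoke conjugacy of Borel pairs in $G$ (applied over a sufficiently large field containing $k$, and then descent; in the situations where this lemma is applied, $k$ can be taken algebraically closed, so $k$-conjugacy of Borel pairs is automatic): there exists $\bar g\in G(k)$ with $\bar g^{-1}B\bar g=B_P$ and $\bar g^{-1}T\bar g=T_L$. Rewriting, $T=\conj{\bar g}{T_L}\subset\conj{\bar g}{L}$ and $B=\conj{\bar g}{B_P}\subset\conj{\bar g}{P}$, with $T\subset B$ defined over $\fq$, which is the required statement. The only potentially delicate point is arranging $\bar g\in G(k)$ rather than in $G(\bar k)$; this is the reason I deferred a Galois-descent remark to the last step, but in the intended use (where $k$ is algebraically closed) it is immediate from the Bruhat--Chevalley conjugacy of Borel pairs.
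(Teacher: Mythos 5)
Your overall strategy --- manufacture a Borel pair adapted to $(P,L)$ and conjugate it onto an $\fq$-rational Borel pair coming from quasi-splitness over the finite field --- is the same idea as the paper's, and over an algebraically closed field your argument is complete. The genuine gap is the rationality of $\bar g$, which you defer and then dismiss on the grounds that $k$ may be taken algebraically closed where the lemma is used. That is not the case here: the lemma is invoked in Section~1, where $k$ is an arbitrary perfect field containing $\fq$, precisely in order to replace the given zip datum by $(G,\conj{\bar g}{P},\conj{\bar g}{P'},\dots)$ \emph{over} $k$; for that, $\bar g\in G(k)$ is essential, since otherwise $\conj{\bar g}{P}$ need not be defined over $k$. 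Your step (ii) already breaks down over non-closed $k$: a Borel subgroup $B_L$ of $L$ containing the $k$-rational maximal torus $T_L$ need only exist over $\bar k$ (indeed, if $(B_P,T_L)$ were $G(k)$-conjugate to the $\fq$-rational pair $(B,T)$ it would automatically be defined over $k$, so this is not a cosmetic issue). And even granting that $(B_P,T_L)$ is defined over $k$, step (iii) fails in general: the simultaneous normalizer of a Borel pair is the torus itself, so the transporter between two $k$-rational Borel pairs is a $T$-torsor, and the obstruction to finding $\bar g$ in $G(k)$ lives in $H^1(k,T)$, which need not vanish for perfect $k\supseteq\fq$.

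The paper's proof is engineered to avoid exactly this: starting from the $\fq$-rational $(B,T)$ (your step (i)), it takes from SGA3, Exp.~XXVI a parabolic $P'\supset B$ of the same type as $P$ together with the unique Levi $L'\supset T$ of $P'$, and then applies Cor.~5.5(iv) there to produce $\bar g\in G(k)$ carrying $(P,L)$ to $(P',L')$. The relevant transitivity statements hold over any field: $G(k)$ acts transitively on parabolic $k$-subgroups of a fixed type (via the big cell / surjectivity of $G(k)\to (G/P)(k)$), and the Levi subgroups of a fixed parabolic form a torsor under its split unipotent radical, whose $H^1$ vanishes --- no torus-valued cohomology intervenes. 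To repair your argument you would either have to switch to conjugating (parabolic, Levi) pairs as the paper does, or first arrange your auxiliary Borel pair to be defined over $k$ (using that every parabolic $k$-subgroup of a quasi-split group contains a Borel defined over $k$) and then still justify the $G(k)$-conjugacy, which again forces you back to the parabolic/Levi conjugacy statements.
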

%
%

\begin{proof}
By the assumption, $G$ is a quasi-split algebraic group, thus we can choose a torus  $T$ and a Borel subgroup $B \supset T$ defined over $\fq$.

By  \cite[Expos\'{e} XXVI, Lemme 3.8.]{sga33} there is the parabolic subgroup $P'$ such that $B \subset P'$ , and $P'$ is of the same type as $P$.
By  Proposition 1.6 \textit{loc.cit.} there is the unique Levi subgroup $L'$ of $P'$ such that $T \subset L'$.
Then the assertion of the lemma is a direct consequence of Corollaire 5.5.(iv) \textit{loc.cit.}.
 \end{proof}

A new zip datum $(G, \conj{\bar{g}}{P}, \conj{\bar{g}}{P'}, \intt{\bar{g}} \circ \varphi \circ \intt{\bar{g}^{-1}})$ for a $\bar{g} \in G(k)$
is obviously  equivalent to the original one, so we assume  by the previous lemma that there are a maximal torus $T \subset L$ and a Borel subgroup $P \supset B \supset T$ already defined over $\fq$. 

The relative Frobenius yields the isogeny $\Fr_{q}\colon L \to \Frq{L} \cong \Frq{P_{-}}/\Frq{U_{-}}$.
In this way we obtain an algebraic zip datum: 
\begin{definition} \label{zipchi}
 The tuple ${\zd}_{G, \chi} =(G, \Frq{P_{-}}, P_{+}, \Fr_{q})$ is called the \textit{algebraic zip datum associated to $G$ and $\chi$}.
\end{definition}
Note that due to the choice of an isogeny $\varphi=\Fr_{q}$ the condition (FC) is automatically fulfilled in this case.

The associated zip group to this zip datum is denoted by $\ezchi$, and the corresponding quotient stack by $\qstack{\ezchi}{G}$.

\subsection*{Quotient stack $\qstack{\ezchi}{G}$}

Denote by $\mathrm{Transp}_{\ezchi}$ the $k$-scheme \\ $\fiber{(\ezchi \times  G)}{G}{\mu ~ G ~ \mathrm{id}}$, where $\mu$ is given by the $\ezchi$ - group action \ref{zipaction}.

We may think  $\qstack{\ezchi}{G}$ as the stack associated to the $k$-groupoid $\{G/\mathrm{Transp}_{\ezchi}\}$(see \cite[(2.4.3)]{laumon} for details), i.e.:
 For a $k$-scheme $S$ the objects of the $k$-groupoid are the elements of $G(S)$ and  morphisms between two objects $g_{1},g_{2}$ 
are the $S$-valued points of the transporter $\mathrm{Transp}_{\ezchi}(g_{1},g_{2})(S)$ of $\ezchi$-action with the composition given by the multiplication map of $\ezchi$.

The underlying topological space of $\qstack{\ezchi}{G}$ has a following common description \cite{oortstrata}.

 If $k= \bar{k}$,  the underlying set $\overline{\Xi}$ is a finite set of $\ez(\bar{k})$-orbits in $G(\bar{k})$,
and the topology is induced by a partial order $\preceq$ on it:  For two $\ezchi(\bar{k})$-orbits $\orb'$ and $\orb$ one sets  $\orb' \preceq  \orb$ if $\orb' \subset \overline{\orb}$, 
where $\overline{\orb}$ denotes the closure of $\orb$ in $G(\bar{k})$. The open sets in this topology are explicitly defined by the following property: $U$ is open if and only if 
for some $\orb \in \overline{\Xi}$  such that  $\orb' \preceq \orb$ for all $\orb' \in U$ follows that $\orb \in U$.
 
Let now $k$ be an arbitrary field, and  $\Gamma=\aut{\bar{k}/k}$ be the profinite group of $k$-automorphisms of $\bar{k}$. 
Then $\Gamma$ acts on $\ezchi(\bar{k})$-orbits of $G(\bar{k})$ preserving the order.
Therefore, one obtains an induced order on the $\Gamma$-orbits of $\overline{\Xi}$,
 and the underlying topological space of $\qstack{\ezchi}{G}$ is isomorphic to $\Xi:=\overline{\Xi}/\Gamma$ with the quotient topology. 

More specifically, the topological space $\overline{\Xi}$ admits the following geometrical description \cite{paul}.

Let $W:=\mathrm{Norm}_{G}(T)(\bar{k})/T(\bar{k})$  be the Weyl group of $G$, $w_{0}$ be the element of maximal length in $G$, 
 and $R_s$ the corresponding set of simple reflections with respect to $T_{\bar{k}} \subset B_{\bar{k}}$.

Let $K \subset R_s$ be a subset. 
\fussy
We denote by  $W_{K}$ the subgroup of the Weyl group $W$ generated by $K$,
and let (cf. \cite[ch. 2.3]{carter})  
\[{}^{K}W:=\{w \in W: w \text{ \textit{of the minimal length in the right coset} }W_{K}w\}.\] 
\sloppy
Note that the Frobenius isogeny $\varphi\colon G \to G$ induces  an automorphism $\overline{\varphi}$ of the Weyl group $W$.

Let $\theta_{0}$ be the element of minimal length in $W_{J}w_{0} W_{\overline{\varphi}(I)}$.

Let further  $I \subset R_s$ be the type of $P_{+}$ and let $J \subset R_s$ be the type of $\Frqb{P_{-}}$.
Then the restriction  $\overline{\psi}:=\intt{\theta_{0}} \circ \overline{\varphi}\colon W \to W$ induces an 
isomorphism of Coxeter systems $(W_{I}, I)$ and  $(W_{J}, J)$.

For $w', w \in {}^{I}W$ one sets $w' \preceq w$ if there is $u \in W_{I}$ such that $u w'\psi(u)^{-1} \leq w$ 
with respect to the Bruhat order on $W$.  

As shown in \cite[Subsection 3.5]{fzips}  $\overline{\Xi} \isoeq {}^{I}W$ with the topology induced by the partial order $\preceq$.

This quotient stack $\qstack{\ezchi}{G}$ is useful to describe the isomorphism classes of $G$-zips of type $\chi$. 

\subsection*{$G$-zips and $F$-zips}

For an affine $k$-group scheme $\mathcal{G}$  we mean by $\mathcal{G}$-torsor a right $\mathcal{G}_{S}$-torsor over $S$ 
for the fpqc-topology. 
In other words, $\mathcal{G}$-torsor $I$ is a scheme $I$ over $S$ endowed with a right action of  $I \times_{S}  (\mathcal{G}_S) \to I$  
written $ (i,g) \mapsto ig$ such that the morphism $I \times_S \mathcal{G_S} \to I \times_S I$ given by $(i,g) \mapsto (i, ig)$ is an isomorphism, 
and there is a scheme $S'$ and an fpqc-morphism $S' \to S$ such that $I(S') \neq \varnothing$. Remark that the last condition can be omitted 
if the structure morphism $I \to S$ is fpqc. 

Let $\mathcal{H}$ be a closed subgroup scheme of $\mathcal{G}$ over $k$ .  We say that a  $\mathcal{H}$-torsor $J$ is a subtorsor of  $\mathcal{G}$-torsor $J$
if there is an $\mathcal{H}$-equivariant inclusion $J \hookrightarrow I$,  where $\mathcal{H}$ acts on $I$ via restriction of the $\mathcal{G}$-action.

We recall the following definition introduced in \cite{fzips}.
\begin{definition} \label{gzps}
1) A $G$-zip of type $\chi$ over $S$ is a tuple $\underline{I}=\gzipdata{I}{\imath}$, where $I$ is a $G$-torsor over $S$,
 $I_{+} \subset I$  a $P_{+}$ subtorsor, $I_{-} \subset I$  a $\Frq{P_{-}}$ subtorsor, and 
 $\imath\colon \Frq{I_{+}}/\Frq{U_{+}} \isoto I_{-}/\Frq{U_{-}}$ an isomorphism of $\Frq{L}$-torsors.

2) A morphism $\gzipdata{I}{\imath} \to \gzipdata{I'}{{\imath}'}$ of $G$-zips of type $\chi$  consists of $G$ resp. $ P_{+}, \Frq{P_{-}}$ equivariant morphisms
$I \to I'$ resp. $I_{\pm} \to I'_{\pm}$ which are compatible with inclusions and the isomorphisms $\imath$ and ${\imath}'$.
 
\end{definition}

The morphisms of $G$-zips of type $\chi$ over $S$ are isomorphisms, hence such  $G$-zips form a groupoid denoted by $\gzipcat(S)$.

As shown in \cite[Prop. 3.2]{fzips}, the groupoids $\gzipcat(S)$ with the obvious pullback definition form a stack  $\gzipcat$ fibered over the category $\kscheme$ .

The stack  $\gzipcat$ is isomorphic to the stack $\qstack{E_{G, \chi}}{G}$ \cite[Prop. 3.11]{fzips}.

The data coming from many interesting geometric objects in nonzero characteristic carries 
the structure of so-called $F$-zips (see \cite[Section 2]{torsten}).

First, we recall the definitions. 
\begin{definition}
Let $\mathcal{M}$ be a locally free $\os$-module of finite rank.

A \textit{descending filtration} $C^{\bullet}$ of $\mathcal{M}$ is a family  $\{C^{i}\mathcal{M}\}_{i \in \zz}$ of $\os$-submodules of $\mathcal{M}$ which are locally direct summands of  $\mathcal{M}$
 such that $C^{i+1}\mathcal{M} \subset C^{i}\mathcal{M}$ for all $i \in \zz$, and $C^{i}\mathcal{M}=\mathcal{M}$ for $i\ll 0$ and  $C^{i}\mathcal{M}=0$ for $i \gg 0$.

Similarly, an \textit{ascending filtration} $D_{\bullet}$ of $\mathcal{M}$ is a family  $\{D_{i}\}_{i \in \zz}$ of $\os$-submodules of $\mathcal{M}$ which are locally direct summands of  $\mathcal{M}$
 such that $D_{i-1}\mathcal{M} \subset D_{i}\mathcal{M}$ for all $i \in \zz$, and $D_{i}\mathcal{M}=\mathcal{M}$ for $i\gg 0$ and  $D_{i}\mathcal{M}=0$ for $i \ll 0$.
\end{definition}
If $S$ has a finite number of connected components, the conditions of the previous definition imply that the subquotiens $\grd{i}\mathcal{M}:=C^{i}\mathcal{M}/C^{i+1}\mathcal{M}$ and $\gra{i}\mathcal{M}:=D_{i}\mathcal{M}/D_{i-1}\mathcal{M}$
are locally free $\os$-modules that vanish outside a finite index range.   
Note that locally free $\os$-modules endowed with a descending resp. an ascending filtration form a category. 
Objects of these categories are pairs $(\mathcal{M}, C^{\bullet})$ resp.  $(\mathcal{M}, D_{\bullet})$ and  maps are the morphisms of    
$\os$-modules that respect these filtrations. 

\begin{definition} \cite[Definition 6.1.]{fzips} \\
1) An $F$-zip over $S$ is a tuple $\fzip{M}=\fzipdata{M}{\varphi}$, where $\mathcal{M}$ is a locally free $\os$-module of finite rank, $C^{\bullet}$
 a descending filtration of $\mathcal{M}$, $D_{\bullet}$ an ascending filtration of $\mathcal{M}$, and $\varphi_{i}\colon \Frqb{\grd{i}\mathcal{M}} \isoto \gra{i}\mathcal{M}$
are $\os$-linear isomorphisms.

2) A \textit{homomorphism } $f\colon \fzip{M}=\fzipdata{M}{\varphi} \to \fzip{N}=\fzipdata{N}{{\varphi}'}$ of $F$-zips over $S$ is a homomorphism 
of the underlying $\os$-modules $\fzip{M} \to \fzip{N}$ satisfying for all $i \in \zz$ the constraints $f(C^{i}\mathcal{M}) \subset C^{i} \mathcal{N}$ and 
$f(D_{i}\mathcal{M}) \subset D_{i} \mathcal{N}$ and making  the following diagram commute:

\begin{displaymath}
\xymatrix{
  \Frqb{\grd{i}\mathcal{M}}  \ar[r]^-{\varphi_{i}}_-{\sim} \ar[d]^{\Frqb{\grd{i}f}} &  \gra{i}\mathcal{M} \ar[d]^{\gra{i}f}\\
  \Frqb{\grd{i}\mathcal{N}}    \ar[r]^-{{\varphi_{i}}'}_-{\sim} &  \gra{i}\mathcal{N}
}
\end{displaymath}
\end{definition}

The resulting category of $F$-zips over $S$ is denoted by $\fzipcat(S)$. 
Its simplest objects are so-called Tate $F$-zips.  
\begin{example}
 The Tate $F$-zip  of weight $d \in \zz$ is the $F$-zip $\tatezip{d}=\fzipdatanew{\os}{\varphi}$, where

\begin{align}
 C^{i}= \left \{ 
\begin{array}{ll}
           \os & \text{ for } i \leq d ,\\
            0  & \text{ for } i>d 
\end{array}
 \right. 
& \text{and }
 D_{i}= \left \{
 \begin{array}{ll}
     0  & \text{ for } i<d,\\
     \os & \text{ for } i \geq d 
\end{array}
 \right. 
 \end{align}
\end{example}
with $\varphi_{d}$ is the identity on $\os=\Frqb{\os}$.
  
With the natural definition of the tensor product and the duals \cite[Section 6]{fzips} the $\fq$-linear category $\fzipcat(S)$ becomes a 
rigid tensor category with the unit object $\tatezip{0}$.

\begin{definition}
 An $F$-zip is called of \textit{rank} $n$, or \textit{height} $n$ if its underlying $\os$-module has constant rank $n$.

Let $\underline{n}\colon \zz \to \nato$ be a function with finite support.
An $F$-zip $\fzip{M}$ is called of type $\underline{n}$ if the graded pieces $\grd{i} \mathcal{M}$, or equivalently  $\gra{i}\mathcal{M}$, 
are locally free $\os$-modules of constant rank $n_{i}:=\underline{n}(i)$ for all $i \in \zz$.  
\end{definition}
Let  $k=\fq$, and $S$ be a $\fq$-scheme.
Denote by $\fzipcatn(S)$ be a subcategory of $\fzipcat(S)$ whose objects are $F$-zips of type $\underline{n}$ and morphisms are isomorphisms.
Since $F$-zips consist of quasi-coherent sheaves and the morphisms thereof, they satisfy the effective descent  with respect to 
any fpqc-morphism of $\fq$-schemes $S' \to S $. Therefore, one obtains the category  $\fzipcatn$ fibered in groupoids which is a stack. 

Let $\fzip{M}=\fzipdata{M}{\varphi}$ be an $F$-zip of type $\underline{n}$ over $S$.  

It is immediate from the definition that $\mathcal{M}$ is Zariski locally  isomorphic to the free $\os$-module  $(k^{n})_S = \os^n$,  and 
the filtered $\os$-modules $(\mathcal{M}, C^{\bullet})$ and $(\mathcal{M}, D_{\bullet})$  to   $(k^{n}, C^{\bullet})_{S}$ resp. $(k^{n}, D_{\bullet})_{S}$. 
Moreover, by a change of basis, we can assume that    $\grd{i}k^{n}= \gra{i}k^{n}=k^{n_{i}}$.

Let $P_{+}:= \autm{(k^{n}, C^{\bullet})}$ and  $P_{-}:= \autm{(k^{n}, D_{\bullet})}$.
As  result, we get two opposite parabolic subgroups of  $\GL{k}$ defined over $k=\fq$ together with the isogeny
of their common Levi factor $L=P_{+} \cap P_{-}$ induced by the Frobenius. As usual denote by $U_{+}$ and $U_{-}$ their unipotent radicals.
  
Now on obtains the corresponding $\GLn$-zip $\underline{I}=\gzipdata{I}{\varphi}$ by taking
\[\begin{array}{rl}
I=&\isom{(k^{n})_S}{\mathcal{M}},\\
I_{+}=&\isom{(k^{n}, C^{\bullet})_S}{(\mathcal{M},C^{\bullet})},\\
I_{-}=&\isom{(k^{n}, D_{\bullet})_S}{(\mathcal{M},D_{\bullet})}
\end{array} \]

Forgetting filtrations gives the $P_{\pm}$ equivariant embeddings $I_{\pm} \hookrightarrow I$.
Moreover, the isomorphism 
$\varphi_{\bullet}\colon \Frqb{\grd{\bullet}\mathcal{M}} \isoto \gra{\bullet}\mathcal{M}$ induces an isomorphism of $L$-torsors:
\begin{align*}
 &\Frqb{I_{+}}/U_{+} \isoeq \isom{(\grd{\bullet} k^{n})_S}{\Frqb{\grd{\bullet}\mathcal{M}}} 
\isoto \isom{(\gra{\bullet} k^{n})_S}{\gra{\bullet}\mathcal{M}}\\
 &\isoeq I_{-}/U_{-}.
\end{align*}

As shown in \cite[Subsection 8.1]{fzips}, the assignment of an $F$-zip to $\GLn$-zip, which is $\fq$-linearly functorial and compatible with the pullback,
gives rise to an isomorphism of stacks   $\fzipcatn$ and $\glnzipcat$. Recall that $\glnzipcat \isoeq \qstack{E_{\GLn, \chi}}{\GLn}$. 

The $F$-zips with additional structures can also be translated to $G$-zips for an appropriate reductive group (see \textit{loc. cit.}).

The practical upshot from the above discussion is that the study of isomorphism classes of $F$-zips or $G$-zips of the fixed type reduces to the study of the stack   
$\qstack{\ezchi}{G}$.

\section{Affineness of the orbits} \label{aff}

Let $k=\bar{k}$ throughout this subsection.
Its aim  is to show that under assumption of the condition (FC) the orbits of the action \ref{zipaction} are affine. As we will see further on, 
it implies purity of  $G$-zips. 

\begin{remark}  \label{czdred}
An orbit of the action \ref{zipaction} on $\hat{G}$ in the non-connected setting (see \cite[Definition 3.6.]{fzips}) is a finite scheme theoretically 
disjoint union of locally closed subsets of $\hat{G}$ which are isomorphic to the orbits of the related connected zip data given by the neutral components $G$ of $\hat{G}$, 
and two parabolic subgroups.
Hence, as already noted in the introduction, we can restrict us to the study of the connected case.  
\end{remark}

We first recall some basic facts. 

\begin{theorem} \cite[Lang-Steinberg thm., section 1.17]{carter} \label{ls}
 Let $\mathcal{G}$ be an affine connected algebraic group over $k$, $F\colon \mathcal{G} \to \mathcal{G}$ an isogeny, such that
$\mathcal{G}^{F}=\{ g \in \mathcal{G}: F(g)=g \}$ is finite.  Then the morphism of  $k$-varieties $\mathcal{L}\colon \mathcal{G} \to \mathcal{G},
g \mapsto g^{-1}F(g)$ is surjective. 
In particular, taking for $F$ a Frobenius map satisfies the condition of the theorem. 
\end{theorem}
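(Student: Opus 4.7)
The strategy is Steinberg's classical twisted-conjugation argument: reformulate $\mathcal{L}$ as an orbit map for a suitable action of $\mathcal{G}$ on itself, and then use the hypothesis on fixed points to deduce the orbit map is smooth, whence surjective by a connectedness argument. I would introduce the action $\star$ of $\mathcal{G}$ on itself defined by $g \star x := g x F(g)^{-1}$ and observe that $\mathcal{L}(h^{-1}) = h F(h)^{-1} = h \star e$, so $\mathrm{Im}(\mathcal{L})$ coincides with the $\star$-orbit of $e$. Surjectivity of $\mathcal{L}$ is therefore equivalent to showing this single orbit exhausts $\mathcal{G}$.

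Since orbits of algebraic group actions are locally closed and $\mathcal{G}$ is connected, it is enough to prove that every $\star$-orbit is open: then $\mathcal{G}$ is a disjoint union of open orbits and must consist of just one, which contains $e$ and hence equals $\mathrm{Im}(\mathcal{L})$. To check openness of the orbit through $x$, I would verify that the orbit map $\mu_x\colon g \mapsto g x F(g)^{-1}$ is smooth at the identity, i.e.\ that its differential there is surjective. A direct calculation, transported from $T_x\mathcal{G}$ to $\mathfrak{g}$ via right translation by $x^{-1}$, yields
\[
d\mu_x|_e = \mathrm{id}_{\mathfrak{g}} - \mathrm{Ad}(x) \circ dF \colon \mathfrak{g} \to \mathfrak{g};
\]
since $\mathfrak{g}$ is finite dimensional, surjectivity is equivalent to triviality of the kernel.

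The hypothesis on $\mathcal{G}^F$ enters exactly here. The stabilizer of $x$ under $\star$ is the fixed-point scheme of the twisted isogeny $F_x := \mathrm{Int}(x) \circ F$, so its tangent space at $e$ equals $\ker(\mathrm{id} - \mathrm{Ad}(x) \circ dF)$, which is what we need to vanish. In the Frobenius case one has $dF \equiv 0$, so the displayed endomorphism is $\mathrm{id}_{\mathfrak{g}}$ for every $x$ and the argument is immediate. For a general isogeny with finite fixed-point set, one must argue that $F_x$ is again an isogeny whose fixed-point scheme is zero-dimensional; the main obstacle I anticipate is verifying that finiteness of $\mathcal{G}^F$ really yields vanishing of the infinitesimal fixed locus of $F_x$, i.e.\ that set-theoretic finiteness upgrades to the needed tangent-space statement (which one settles by invoking \'etaleness of the fixed scheme coming from the isogeny hypothesis). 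Once this is secured, smoothness of every orbit map at the identity follows, every $\star$-orbit is open, connectedness of $\mathcal{G}$ forces a single orbit, and this orbit is all of $\mathcal{G}$; hence $\mathcal{L}$ is surjective.
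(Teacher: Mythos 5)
The paper offers no proof of this statement at all --- it is quoted verbatim from Carter (\cite[Section 1.17]{carter}) --- so there is nothing to compare your argument with except the classical proofs; judged on its own, your proposal has a genuine gap, and it sits exactly at the point you flagged as ``the main obstacle''. Your reduction to openness of every $\star$-orbit, the identification $\mathrm{Im}(\mathcal{L})=\mathcal{G}\star e$, and the computation $d\mu_x|_e=\mathrm{id}_{\mathfrak{g}}-\mathrm{Ad}(x)\circ dF$ are all correct. What fails is the claimed upgrade from finiteness of the fixed-point set to vanishing of $\ker(\mathrm{id}-\mathrm{Ad}(x)\circ dF)$: the fixed-point scheme of an isogeny with finitely many fixed points need \emph{not} be \'etale in characteristic $p$. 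Take $\mathcal{G}=\mathbb{G}_a$ and $F(x)=x^p+x$. This is an isogeny with kernel $\{x: x^p=-x\}$, and $\mathcal{G}^F=\{x:x^p=0\}$ is a single point set-theoretically, so the hypotheses of the theorem hold; but as a scheme $\mathcal{G}^F\cong \Spec k[x]/(x^p)$ is non-reduced, $dF=\mathrm{id}$, and $\mathrm{id}-\mathrm{Ad}(x)\circ dF=0$ for every $x$. Every orbit map $g\mapsto x+g-F(g)=x-g^p$ is purely inseparable with identically vanishing differential, yet it is surjective. So your smoothness criterion is simply false under the stated hypotheses, even though the conclusion of the theorem is true.

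Two further remarks. First, the correct way to get openness of an orbit without separability is the fiber-dimension count $\dim \mathcal{O}_x=\dim\mathcal{G}-\dim \stab{\mathcal{G}}{x}$, which only needs the stabilizer $\mathcal{G}^{F_x}$, $F_x=\intt{x}\circ F$, to be finite \emph{as a set}; but deducing that finiteness for all $x$ from finiteness of $\mathcal{G}^{F}$ alone is the genuinely hard content of Steinberg's theorem and is not addressed by your sketch (your hypothesis only controls the stabilizer of $e$). Second, your argument is perfectly sound in the cases the paper actually uses: for $F$ a Frobenius one has $dF=0$, and more generally if $\lie F$ is nilpotent then $(\mathrm{Ad}(x)\circ dF)^n=\mathrm{Ad}(xF(x)\cdots F^{n-1}(x))\circ (dF)^n=0$, so $\mathrm{id}-\mathrm{Ad}(x)\circ dF$ is invertible for every $x$ and all orbit maps are smooth --- which is exactly the refinement recorded in Remark \ref{lsrem}. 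So your proof establishes the ``in particular'' clause and the variant the paper relies on, but not the theorem for an arbitrary isogeny with finite fixed-point set.
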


\begin{remark} \label{lsrem}
Going through the proof  one can see the theorem holds if the above finiteness condition is replaced by the condition that $\lie F$ is nilpotent.  

We suppose now that the conditions of the previous theorem hold. Then: 
\begin{enumerate}
\item[(i)] By composing with the map $g \mapsto g^{-1}$ we conclude $\mathcal{L'}\colon \mathcal{G} \to \mathcal{G},  g \mapsto g{F(g)}^{-1}$ is also surjective.
\item[(ii)] Let $\mathcal{G}$ act on itself by $F$-conjugation, i.e. $(g,x) \mapsto gxF(g)^{-1}$, $g,x \in G$.  
Then  $\mathcal{G}$ coincides with the orbit of $1$, hence this action is transitive.
\end{enumerate}
\end{remark}
The following two easy corollaries will be further useful.
\begin{corollary} \label{lscor}
Let $F\colon \mathcal{G} \to \mathcal{G}$ be an isogeny, and $x_{0}$ be a point of $\mathcal{G}$. The following statements are equivalent: 
\begin{enumerate}
\item[(i)] $\mathcal{L}$ is surjective 
\item[(ii)]  $\mathcal{G}^{F}$ is finite
\item[(iii)] $\mathcal{L}_{x_0}\colon \mathcal{G} \to \mathcal{G}$, $g \mapsto gx_{0}F(g)^{-1}$ is surjective  
\item[(iv)] $\mathcal{G}^{x_{0}, F}:=\{ g \in \mathcal{G}: gx_{0}F(g)^{-1}=x_{0}\}$ is finite
\item[(v)] $\mathcal{L}_{x_{0}}$ is a finite morphism
\end{enumerate}
\begin{proof}
(ii) $\Rightarrow$ (i) is exactly the statement of Lang-Steinberg theorem.  

(i) $\Rightarrow$ (ii): By the previous remark the $F$-conjugation is transitive. 
Then, for the dimension reason, the stabilizers of all points of $\mathcal{G}$, in particular  $1 \in \mathcal{G}$ which is $\mathcal{G}^{F}$, are finite.

(i) $\Leftrightarrow$ (iii): Both statements are equivalent to the transitivity of  $F$-conjugation.

(iii) $\Rightarrow$ (iv): The $F$-conjugation is transitive by the previous remark. Then, for the dimensions reasons, 
the stabilizers  of all points of $\mathcal{G}$, in particular $\mathcal{G}^{x_{0}, F}$, are finite.

(iv) $\Rightarrow$ (iii):  Consider $F'(g):= x_{0}\ F(g) x_{0}^{-1}$. Then by  Lang-Steinberg theorem the map $\mathcal{L}'': g \mapsto g F'(g)^{-1}$ is surjective.
Hence  $\mathcal{L}_{x_0}= \mathcal{L}''x_{0}$ is so as well. 
    
(v) $\Rightarrow$ (iv):   $\mathcal{G}^{x_{0}, F}$ is finite as being a fiber of a finite (in particular a quasi-finite) morphism.

(iv)  $\Rightarrow$ (v): Note that $\mathcal{L}_{x_{0}}\colon \mathcal{G} \to \mathcal{G}$ is a (right) $\mathcal{G}^{x_{0}, F}$-torsor.

By passing to  $\red{G}$ we can assume $G$ is smooth. 
Thus we have the following Cartesian diagram:
\begin{displaymath}
\xymatrix{
  \mathcal{G} \times  \mathcal{G}^{x_{0}, F} \ar[d]^{\pi_1} \ar[r]^>>>>>{\pi_1} & \mathcal{G} \ar[d]^{\mathcal{L}_{x_{0}}}   \\
  \mathcal{G} \ar[r]^{\mathcal{L}_{x_{0}}} & \mathcal{G}
}
\end{displaymath}
Here $\pi_1$ is a projection onto the first factor. The map  $\mathcal{L}_{x_{0}}$ is quasi-finite and surjective by the foregoing part of the proof, hence it is faithfully flat 
as a  morphism of smooth varieties.  The projection $\pi_1$ is clearly finite, hence  by faithfully flat descent $\mathcal{L}_{x_{0}}$ is so as well.

\end{proof}
\end{corollary}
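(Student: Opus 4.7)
The plan organises the equivalence into three blocks, leveraging Theorem \ref{ls} and a twisting trick, with the finiteness of $\mathcal{L}_{x_0}$ as the real content.

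First I would handle (i) $\Leftrightarrow$ (ii). Theorem \ref{ls} gives (ii) $\Rightarrow$ (i). For the converse, Remark \ref{lsrem}(ii) says that surjectivity of $\mathcal{L}$ means the $F$-conjugation action is transitive on $\mathcal{G}$; by the orbit/stabiliser dimension count the stabiliser of $1 \in \mathcal{G}$, which is $\mathcal{G}^F$, must be zero-dimensional, hence finite (closed subgroup scheme of $\mathcal{G}$).

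Next I would reduce (iii) $\Leftrightarrow$ (iv) and (i) $\Leftrightarrow$ (iii) to the already-proved equivalence (i) $\Leftrightarrow$ (ii), by means of the \emph{twisted isogeny} $F'(g) := x_0 F(g) x_0^{-1}$. One checks directly that $F'$ is again an isogeny, that $\mathcal{G}^{F'} = \mathcal{G}^{x_0,F}$ (the identity $F'(g)=g$ is equivalent to $gx_0F(g)^{-1} = x_0$), and that the Lang map $g \mapsto g F'(g)^{-1}$ of $F'$ differs from $\mathcal{L}_{x_0}$ only by right multiplication by $x_0^{-1}$. Consequently (iii), (iv) for $(F,x_0)$ are nothing but (i), (ii) for $F'$. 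The equivalence (i) $\Leftrightarrow$ (iii) is even more direct, as both conditions express transitivity of $F$-conjugation on $\mathcal{G}$.

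The main obstacle is (iv) $\Rightarrow$ (v): I must upgrade quasi-finiteness (fibers isomorphic to $\mathcal{G}^{x_0,F}$) to a \emph{finite} morphism. My plan is fpqc descent. After passing to $\red{\mathcal{G}}$ I may assume $\mathcal{G}$ is smooth, so that $\mathcal{L}_{x_0}$ is a surjective quasi-finite morphism between smooth irreducible varieties of equal dimension, hence flat by miracle flatness, and therefore faithfully flat. Computing the self-fiber product, a pair $(g_1,g_2)$ with $\mathcal{L}_{x_0}(g_1) = \mathcal{L}_{x_0}(g_2)$ satisfies $h x_0 F(h)^{-1} = x_0$ for $h := g_2^{-1}g_1$, so
\begin{displaymath}
\mathcal{G} \underset{\mathcal{L}_{x_0},\,\mathcal{G},\,\mathcal{L}_{x_0}}{\times} \mathcal{G} \;\cong\; \mathcal{G} \times \mathcal{G}^{x_0,F}, \qquad (g_1,g_2) \longmapsto (g_2,\,g_2^{-1}g_1),
\end{displaymath}
and under this identification the second projection becomes the projection $\mathcal{G} \times \mathcal{G}^{x_0,F} \to \mathcal{G}$, which is finite because $\mathcal{G}^{x_0,F}$ is. Since "finite" is fpqc-local on the target, finiteness of $\mathcal{L}_{x_0}$ follows. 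The reverse implication (v) $\Rightarrow$ (iv) is immediate, $\mathcal{G}^{x_0,F}$ being the fiber of $\mathcal{L}_{x_0}$ over $x_0$.

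The technical point I expect to have to be careful about is the descent step: checking that the explicit isomorphism of the self-fiber product with $\mathcal{G} \times \mathcal{G}^{x_0,F}$ really does intertwine the second projection with the first projection of the product, and then citing the correct descent statement ("finite" is an fpqc-local property of morphisms). Everything else is either the Lang–Steinberg theorem applied to a suitable isogeny or a dimension count for stabilisers of a transitive action.
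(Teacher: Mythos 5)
Your proposal is correct and follows essentially the same route as the paper: Lang–Steinberg plus a dimension count for the stabilizers for (i)$\Leftrightarrow$(ii), the twisted isogeny $F'(g)=x_0F(g)x_0^{-1}$ to handle the $x_0$-versions (the paper uses the same $F'$ in its (iv)$\Rightarrow$(iii) step, just organized slightly less systematically), and for (iv)$\Rightarrow$(v) the identical argument of passing to the reduction, identifying the self-fiber product of $\mathcal{L}_{x_0}$ with $\mathcal{G}\times\mathcal{G}^{x_0,F}$, and descending finiteness along the faithfully flat $\mathcal{L}_{x_0}$. No gaps.
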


\begin{corollary} \label{lsfin}
Let $F\colon \mathcal{G} \to \mathcal{G}$ be an isogeny. The  $\mathcal{G}$-action on $\mathcal{G}$ by $F$-conjugation (cf.\ref{lsrem}(ii)) is transitive if and only if
 the stabilizers of all points of $\mathcal{G}$ are finite.
Otherwise, there exist infinitely many orbits of  $\mathcal{G}$-action by $F$-conjugation in $\mathcal{G}$. 
\end{corollary}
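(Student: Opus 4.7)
The strategy is to bootstrap Corollary \ref{lscor}, whose key takeaway is that the five listed conditions are mutually equivalent. Crucially, (i) and (ii) do not mention $x_0$ while (iii)--(v) do, so the equivalence implies that the $x_0$-dependent conditions hold either for every $x_0 \in \mathcal{G}$ or for none. In particular, either the stabilizer $\mathcal{G}^{x_0, F}$ of every point under $F$-conjugation is finite, or the stabilizer of every point is infinite; there is no mixed case. This $x_0$-independence is the engine of the whole proof.

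For the ``if and only if'' part, suppose first that $F$-conjugation is transitive. Then for any $x_0 \in \mathcal{G}$ the orbit map $\mathcal{L}_{x_0}$ is surjective, which is condition (iii), so by the equivalence (iii) $\Leftrightarrow$ (iv) in Corollary \ref{lscor} every stabilizer $\mathcal{G}^{x_0, F}$ is finite. Conversely, if all stabilizers are finite then in particular $\mathcal{G}^F = \mathcal{G}^{1,F}$ is finite, and (ii) $\Rightarrow$ (i) of Corollary \ref{lscor} (which is Lang--Steinberg) yields surjectivity of $\mathcal{L}$, which in turn means transitivity of $F$-conjugation by Remark \ref{lsrem}(ii).

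For the second assertion, assume $F$-conjugation is not transitive. By the first part \emph{some}---and hence, by the observation above, \emph{every}---stabilizer is infinite, so each $\mathcal{G}^{x_0, F}$ has dimension at least $1$. The orbit-stabilizer dimension formula $\dim(\mathcal{G} \cdot x_0) = \dim \mathcal{G} - \dim \mathcal{G}^{x_0, F}$ therefore forces every orbit to have dimension at most $\dim \mathcal{G} - 1$. Since each orbit is locally closed, its closure is a closed subvariety of the same dimension; a finite union of closed subvarieties of $\mathcal{G}$ of dimension strictly less than $\dim \mathcal{G}$ cannot cover the equidimensional algebraic group $\mathcal{G}$. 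Hence there must be infinitely many orbits.

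The whole argument is conceptually short; there is no real obstacle, since the heavy lifting was done in Corollary \ref{lscor}. The only point that could go wrong if one tried to bypass that corollary is the passage from ``one stabilizer is infinite'' to ``every stabilizer is infinite''---this is exactly what the equivalence between the $x_0$-free conditions (i), (ii) and the $x_0$-indexed ones (iii), (iv), (v) buys us.
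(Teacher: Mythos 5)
Your proof is correct and uses essentially the same ingredients as the paper: the equivalences of Corollary \ref{lscor} for the first statement, and a dimension count combined with the connectedness (irreducibility) of $\mathcal{G}$ for the second. The only cosmetic difference is that you argue the contrapositive directly (not transitive $\Rightarrow$ all stabilizers infinite $\Rightarrow$ all orbits have dimension $< \dim\mathcal{G}$ $\Rightarrow$ finitely many cannot cover $\mathcal{G}$), whereas the paper argues that finitely many orbits force a dense orbit, whose stabilizer is then finite for dimension reasons, giving transitivity via Corollary \ref{lscor}.
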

\begin{proof}
 The first statement is immediate by Corollary \ref{lscor}. Suppose now there are finitely many orbits of the $\mathcal{G}$-action by $F$-conjugation.
Since $\mathcal{G}$ is connected,  one of them lies dense in $\mathcal{G}$. Therefore, for the dimension reason it has a finite stabilizer.
Now by \ref{lscor} follows that the $\mathcal{G}$-action  by $F$-conjugation is transitive.
\end{proof}

Now let   $I \in R_s$ resp. $J \in R_s$  be the types of the parabolic subgroups $\conj{g_0^{-1}}P'$ and  $P$ both containing Borel subgroup $B$.
Recall that $W_{I}$ and $W_{J}$ are subgroup of Weyl group $W$ generated by sets of simple reflections $I$ resp. $J$. 

As is well known, we have the following Bruhat decomposition of $G$: \[G= \verein{w \in \iweilj} \conj{g_0^{-1}}P'wP,\] where  $\iweilj$ is a system
of representatives for $W_{I}\backslash W \slash W_{J}$ in the normalizer of $T$.  By the left translation with $g_0$ it yields the decomposition: 
$G= \verein{w \in \iweilj} P'g_0wP$.

We now fix some arbitrary $w$ as above. The set $P'g_0wP$ being $P' \times P$ orbit in $G$ is locally closed, and obviously stable 
under the $\ez$-action \ref{zipaction}, hence each orbit is contained in exactly one of such pairwise disjoint pieces.
Moreover, it is clear by the definition of $\ez$-action that each orbit in $P'g_0wP$ contains an element of the form $g=g_{0}wl$ for some $l \in L$.

Now consider the right homogeneous space $G/U$ with the action of $P'$ on it given by 
\[ (p',[g]) \mapsto [p'g \varphi(\pr{L'}{p'})^{-1}]. \]

Note that the restriction of the projection $P'\times P \to P'$ gives rise to the surjective morphism  $\ez \to P'$.
Via this morphism  we obtain the action of $\ez$ on  $G/U$ making the quotient map $G \to G/U$ $\ez$-equivariant. 
Thus, we get the faithfully flat morphism bijectively mapping the $\ez$-orbits of $G$ onto the $P'$-orbits of $G/U$. 
Moreover, this map is affine by \cite[ch. 12, Prop.12.3.(3)]{wedh} since $G$ is affine, and, as is well known, the homogeneous space $G/U$ 
is a quasi-projective variety \footnote{$G/U$ is even quasi-affine 
since $U$ is observable in $G$ (cf. \cite[Ch. 10, Observation 2.4.,Thm. 5.4]{santos}).} (see \cite[Ch. 7, Thm. 4.2]{santos}),
in particular separated.

Note that the above morphism  $\ez \to P'$ induces an isomorphism $\iota\colon \stab{\ez}{g_{0}wl} \to  \stab{P'}{[g_{0}wl]}$
with respect to the actions of $\ez$ resp. $P'$.  The inverse map $\iota^{-1}\colon \stab{P'}{[g_{0}wl]} \to \stab{\ez}{g_{0}wl}$ is given by $p' \mapsto (p',\conj{(g_0wl)^{-1}}{p'})$.

Clearly, $\stab{P'}{[g_{0}wl]} \subset P' \cap \conj{g_{0}w}{P}$, so we have  $\stab{P'}{[g_{0}wl]}=\stab{P' \cap \conj{g_{0}w}{P}}{[g_{0}wl]}$.

\begin{remark} \label{redpr}

There is a possibility to pass from an original zip datum $\mathcal{Z}=(G,P,P',\varphi)$ to some new zip datum  $\mathcal{Z}_{n}$
containing the algebraic groups of lower dimensions.  

This reduction process is introduced in \cite[Section 4]{paul}, and  it yields a one-to-one closure preserving
correspondence between the orbits inside a $\ez$-stable piece $P'g_{0}wP$ and the orbits in $L$ with respect 
to the $\ezn$-action.  
The new zip datum is given by $ \mathcal{Z}_{n}:= (L, Q, Q', \psi)$,  where $Q:= \varphi(L' \cap \conj{g_{0}w}{P})$, $Q':=L \cap \conj{w^{-1}g_{0}^{-1}}{P'}$
 are two parabolic subgroups of $L$ together 
with the isogeny \[\psi:=\varphi\circ \intt{g_{0}w}|_{L \cap \conj{w^{-1}g_{0}^{-1}}{L'}}\colon L \cap \conj{w^{-1}g_{0}^{-1}}{L'} \to \varphi(L' \cap \conj{g_{0}w}{L})\]
between their Levi factors. Note that $\lie \varphi=0$ implies $\lie \psi=0$. 

As the dimensions of the algebraic groups get smaller, the reduction process terminates in a finite number of steps.  
 For a terminating zip datum holds  $G=L$. Thus, it must be of the form  $(G,G,G,\varphi), $ and $\ez \cong G$ acts on $G$ by:
$(g,x) \mapsto g x \varphi(g)^{-1}$, $g,x \in G$. 

Since we assume the condition (FC) \ref{fc} it follows that there is
 the only finite number of orbits with respect of $\varphi$-conjugate $G$-action. Then  by Corollary \ref{lsfin} we conclude  that 
$G$ acts transitively with finite stabilizers. 
\end{remark}

This reduction process makes it possible to  give an inductive description of the stabilizers of the point $g_{0}wl \in P'g_{0}wP$.
\begin{lemma} 
There is an exact sequence of algebraic groups:
\begin{equation} \label{stab}
\xymatrix  @C=1.5pc{
 1 \ar[r] &  \ker e_{n} \ar[r] & \stab{P' \cap \conj{g_{0}w}{P}}{[g_{0}wl]} \ar[rr]^>>>>>{e_n} &&  \stab{\ezn}{l} \ar[r] & 1 
 }
\end{equation}
where $e_{n}$ denotes the restriction of the morphism $e_{n}\colon P' \cap \conj{g_{0}w}{P} \to \ezn$ given by $e_{n}(p'):=(\pr{L}{\conj{w^{-1}g_{0}^{-1}}{p'}},\varphi(\pr{L'}{p'}))$.

The reduced group scheme $\red{\ker e_{n}}$ is isomorphic to  $U' \cap \conj{g_{0}w}{U}$.

\end{lemma}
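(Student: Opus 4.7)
The plan is to verify that $e_n$ is a well-defined homomorphism into $\ezn$, to identify the stabilizer as $e_n^{-1}(\stab{\ezn}{l})$, to prove surjectivity of $e_n$ onto $\ezn$, and finally to compute the reduced kernel. For well-definedness, I first observe that if $p' \in P' \cap \conj{g_0w}{P}$, then $\conj{w^{-1}g_{0}^{-1}}{p'} \in P \cap \conj{w^{-1}g_{0}^{-1}}{P'}$, so $\pr{L}{\conj{w^{-1}g_{0}^{-1}}{p'}}$ lies in $Q' = L \cap \conj{w^{-1}g_{0}^{-1}}{P'}$; dually, $\pr{L'}{p'}$ lies in the parabolic $L' \cap \conj{g_0w}{P}$ of $L'$, whose image under $\varphi$ is $Q$. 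The $\ezn$-compatibility $\psi(\pr{M'}{\cdot}) = \pr{M}{\cdot}$ then unwinds, via $\psi = \varphi \circ \intt{g_0w}|_{M'}$, to the assertion that $\intt{g_0w}$ intertwines the Levi projections on the relevant intersections; multiplicativity of $e_n$ is clear.

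The central calculation is the identification of the stabilizer. Using the $P'$-action on $G/U$ given by $p' \cdot [g] = [p'g\,\varphi(\pr{L'}{p'})^{-1}]$, the fixed-point condition $p' g_0 w l\, \varphi(\pr{L'}{p'})^{-1} \in g_0wl \cdot U$, after conjugation by $(g_0w)^{-1}$ and the decomposition $\conj{w^{-1}g_0^{-1}}{p'} = \tilde{u}\tilde{l}$ with $\tilde{u} \in U$, $\tilde{l} \in L$, separates (via $P = U \rtimes L$) into the Levi equation $\tilde{l} = l\,\varphi(\pr{L'}{p'})\,l^{-1}$ together with a uniquely solvable relation in $U$ for $\tilde{u}$. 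The Levi equation is precisely $e_n(p') \in \stab{\ezn}{l}$, which yields $\stab{P' \cap \conj{g_0w}{P}}{[g_0wl]} = e_n^{-1}(\stab{\ezn}{l})$. For surjectivity onto $\ezn$, given $(q',q) \in \ezn$ I set $p'_0 := (g_0w)q'(g_0w)^{-1} \in P' \cap \conj{g_0w}{L}$, which produces $q'$ as its first $e_n$-component; the $\ezn$-compatibility $\psi(\pr{M'}{q'}) = \pr{M}{q}$ guarantees that the second component of $e_n(p'_0)$ differs from $q$ only by an element of the unipotent radical of $Q$, and this discrepancy can be absorbed by multiplying $p'_0$ by a suitable element of $U' \cap \conj{g_0w}{P}$ lifting the needed correction under $\varphi \circ \pr{L'}{\cdot}$ without disturbing the first component.

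Finally, $p' \in \ker e_n$ forces $\conj{w^{-1}g_0^{-1}}{p'} \in U$ (i.e.\ $p' \in \conj{g_0w}{U}$) together with $\pr{L'}{p'} \in \ker\varphi$. Since $\ker\varphi$ is infinitesimal (or finite \'etale), passing to reduced structure forces $\pr{L'}{p'} = 1$ and hence $p' \in U'$, giving $\red{\ker e_n} \cong U' \cap \conj{g_0w}{U}$. The main obstacle I anticipate is the surjectivity half: the correcting unipotent must lie in $P' \cap \conj{g_0w}{P}$ and adjust the $L'$-component of $e_n$ to match $q$ without disturbing the $L$-component, and the feasibility of this correction is precisely the content of the $\ezn$-compatibility condition; everything else reduces to routine bookkeeping in the Levi/unipotent decompositions of the parabolic intersections involved.
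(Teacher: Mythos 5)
Your overall architecture --- identify $\stab{P' \cap \conj{g_{0}w}{P}}{[g_0wl]}$ as $e_n^{-1}(\stab{\ezn}{l})$ by separating the fixed-point condition into a Levi equation and a uniquely solvable unipotent relation, then prove surjectivity of $e_n$ and compute the reduced kernel --- is sound, and your stabilizer computation is in fact more explicit than what the paper writes down (the paper essentially delegates this to the reduction process of Pink--Wedhorn--Ziegler and only addresses the kernel and the splitting). However, your surjectivity correction fails as stated. You propose to repair the second component of $e_n(p'_0)$, $p'_0=\conj{g_{0}w}{q'}$, by multiplying by an element of $U' \cap \conj{g_{0}w}{P}$ ``lifting the needed correction under $\varphi\circ\pr{L'}{\cdot}$''. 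But $U'$ is precisely the kernel of the homomorphism $\pr{L'}\colon P'\to L'$, so multiplication by any element of $U'\cap\conj{g_{0}w}{P}$ (on either side) leaves $\varphi(\pr{L'}{\cdot})$ unchanged --- and right multiplication can even disturb the first component, since $\pr{L}{\conj{w^{-1}g_{0}^{-1}}{u'}}$ lands in $L\cap\conj{w^{-1}g_{0}^{-1}}{U'}=\urad Q'$, which need not vanish. The correcting element must instead be taken in $L'\cap\conj{g_{0}w}{U}$: it is killed by $\pr{L}{\conj{w^{-1}g_{0}^{-1}}{\cdot}}$, so the first component survives, and $\varphi\circ\pr{L'}$ carries $L'\cap\conj{g_{0}w}{U}$ onto $\urad Q=\varphi(L'\cap\conj{g_{0}w}{U})$, which is exactly where the discrepancy between $\varphi(\pr{L'}{p'_0})=\pr{M}{q}$ and $q$ lives. (Your observation that the naive lift $\conj{g_{0}w}{q'}$ --- the paper's splitting $f_n$ --- only recovers $q$ modulo $\urad Q$ is correct, so some correction of this kind is genuinely needed.)

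The kernel computation also has a gap. From $\pr{L'}{p'}\in\ker\varphi$ you conclude $\pr{L'}{p'}=1$ by ``passing to reduced structure'', but this only works when $\ker\varphi$ is infinitesimal; if $\ker\varphi$ has a nontrivial \'etale part (e.g.\ $\mu_\ell$ with $\ell\neq p$ for a central isogeny), then $\red{\ker\varphi}$ is a nontrivial finite group and reducedness alone gives nothing. The missing ingredient --- which is the actual content of the paper's argument --- is that $\red{\ker\varphi}$ is central in the connected group $L'$, hence contained in a maximal torus and thus consists of semisimple elements, while $\pr{L'}{p'}$ is the image of the unipotent element $p'\in\conj{g_{0}w}{U}$ under a group homomorphism and is therefore unipotent. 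Being both semisimple and unipotent it is trivial, and only then does $p'\in U'\cap\conj{g_{0}w}{U}$ follow.
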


\begin{proof}
 It's immediate from the definition of  $e_{n}$ that
 $U' \cap \conj{g_{0}w}{U} \subset \red{\ker e_{n}} \subset (\ker \varphi \cdot U')  \cap \conj{g_{0}w}{U}$.

Since $\varphi$ is an isogeny between connected algebraic groups, it follows that $\red{\ker \varphi}$ lies in the center of $L'$ (see \cite[5.3.5]{springer}). Thus,
$\red{\ker \varphi}$ lies in some torus of $L'$, hence $\red{\ker \varphi} \cap \conj{g_{0}w}{U}=1$.  As $U' \cap \conj{g_{0}w}{U}$ is smooth,
it implies $\red{\ker e_{n}}=U' \cap \conj{g_{0}w}{U}$.

And finally, the map  $e_{n}$ is faithfully flat (we will see below in the proof of Theorem \ref{purity} that it is split).
\end{proof}

\begin{remark} \label{exactness}
 Let  $\mathcal{G}$ be an affine connected algebraic group over algebraically closed field $k$  and $\mathcal{H} \subset \mathcal{G}$ be a smooth closed connected subgroup.
Furthermore let $\gmodcat{H}$ and  $\gmodcat{G}$ be the categories of rational $\mathcal{H}$- resp.  $\mathcal{G}$-modules.

A closed subgroup  $\mathcal{H}$ is by definition \textit{exact} in $\mathcal{G}$ if the induction functor  $\mathrm{Ind}_{\mathcal{H}}^{\mathcal{G}}\colon \gmodcat{H} \to  \gmodcat{G}$   
is exact.
It is a well known fact that  $\mathcal{G}/\mathcal{H}$ is affine if and only if $\mathcal{H}$ is exact in  $\mathcal{G}$ \cite[ch. 11, Theorems 4.5.and 6.7.]{santos}.
As the induction functor  $\mathrm{Ind}_{\mathcal{H}}^{\mathcal{G}}\colon \gmodcat{H} \to  \gmodcat{G}$ 
is an adjoint functor to the restriction functor   $\mathrm{Res}_{\mathcal{G}}^{\mathcal{H}}\colon \gmodcat{G} \to  \gmodcat{H}$,
and since restriction is transitive, induction is so as well.

Specifying the exact group theoretical conditions such that $\mathcal{H}$ is exact in $\mathcal{G}$ turns out to be a hard problem \cite{cline}.
Nevertheless, there are some easy situations, e.g. $\mathcal{H}$ is a closed subgroup of the unipotent radical $\urad \mathcal{G}$ of $\mathcal{G}$, the case we will study next.

Assume now: $\mathcal{H} \subset \urad \mathcal{G}$. Thus, proving that $\mathcal{H}$ is exact in $\mathcal{G}$ amounts to showing that  $\mathcal{H}$ is exact in $\urad \mathcal{G}$,
and  $\urad \mathcal{G}$ is exact in $\mathcal{G}$. The latter is obvious as the quotient is an affine (reductive) algebraic group.

The exactness of  $\mathcal{H}$ in $\urad \mathcal{G}$ is also clear \footnote{see also \cite[Corollary 2.2]{cline} for another proof}: Since unipotent groups
 have only trivial characters, there exist   
a rational module $M$ and a point $x \in M$ such that $\urad \mathcal{G}/\mathcal{H}$ is an $\urad \mathcal{G}$-orbit of $x$ (cf.\cite[Ch.7, Corollary 3.6.]{santos}).
Therefore $\urad \mathcal{G}/\mathcal{H}$ being an orbit of a unipotent  group in the affine variety $M$ is closed in $M$, and hence $\mathcal{G}/\mathcal{H}$ is affine.
\end{remark}

\begin{prop} \label{hsaffin}
Let $k$ be an algebraically closed field.

 (i) Let $\mathcal{G}$ be an affine algebraic group over $k$, $\mathcal{H} \subset \urad \mathcal{G}$ a closed subgroup. Then the quotient $\mathcal{G}/\mathcal{H}$ is affine.
 
 (ii) Let $\mathcal{P}$ be a parabolic subgroup of some reductive algebraic group over $k$ and  $\mathcal{V}$ 
 be a smooth, connected closed unipotent subgroup of $\mathcal{P}$.
 Then $\mathcal{P}/\mathcal{V}$ is affine if and only if $\mathcal{V} \subset \urad \mathcal{P}$.
\end{prop}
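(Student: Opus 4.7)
For part (i), my plan is to invoke Remark \ref{exactness}, which reduces affineness of $\mathcal{G}/\mathcal{H}$ to showing that $\mathcal{H}$ is exact in $\mathcal{G}$. I would factor the inclusion through $\mathcal{H}\hookrightarrow \urad \mathcal{G}\hookrightarrow \mathcal{G}$ and combine the two exactness statements already spelled out in that remark: $\urad \mathcal{G}$ is exact in $\mathcal{G}$ because $\mathcal{G}/\urad \mathcal{G}$ is reductive and hence affine, and every closed subgroup of the unipotent group $\urad \mathcal{G}$ is exact in it, since unipotent groups have only trivial characters and consequently $\urad \mathcal{G}/\mathcal{H}$ can be exhibited as a closed orbit inside a rational $\urad \mathcal{G}$-module by \cite[Ch.~7, Corollary 3.6.]{santos}. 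Transitivity of induction then yields exactness of $\mathcal{H}$ in $\mathcal{G}$, and the remark concludes the proof.

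The ``if'' direction of (ii) is an immediate application of (i) to $\mathcal{G}=\mathcal{P}$, noting that $\urad \mathcal{P}$ is the unipotent radical of the affine algebraic group $\mathcal{P}$ itself.

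For the ``only if'' direction I proceed by contrapositive. Assume $\mathcal{V}\not\subset \urad \mathcal{P}$ and let $\pi\colon \mathcal{P}\to L$ denote the projection onto a Levi factor. Then $\bar{\mathcal{V}}:=\pi(\mathcal{V})$ is a non-trivial smooth connected unipotent subgroup of the reductive group $L$. A direct check gives $\pi^{-1}(\bar{\mathcal{V}})=\mathcal{V}\cdot \urad \mathcal{P}$, so $\pi$ descends to the canonical $\mathcal{P}$-equivariant surjection
\[
f\colon \mathcal{P}/\mathcal{V} \longrightarrow \mathcal{P}/(\mathcal{V}\cdot \urad \mathcal{P}) \;\cong\; L/\bar{\mathcal{V}},
\]
whose fibre is $(\mathcal{V}\cdot \urad \mathcal{P})/\mathcal{V}\cong \urad \mathcal{P}/(\mathcal{V}\cap \urad \mathcal{P})$. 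The fibre is affine by (i) applied inside $\urad \mathcal{P}$, and since $f$ arises as a morphism between homogeneous spaces of the connected group $\mathcal{P}$ it is fppf locally trivial; consequently $f$ is an affine, faithfully flat morphism onto the quasi-projective (hence separated) homogeneous space $L/\bar{\mathcal{V}}$.

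The final step is fpqc descent of affineness. If $\mathcal{P}/\mathcal{V}$ were affine, then for any quasi-coherent sheaf $\mathcal{F}$ on $L/\bar{\mathcal{V}}$, the vanishing $R^{i}f_{*}=0$ coming from $f$ affine, together with Serre's cohomological criterion on the affine total space and the injectivity $\mathcal{F}\hookrightarrow f_{*}f^{*}\mathcal{F}$ granted by faithful flatness, would force $H^{i}(L/\bar{\mathcal{V}},\mathcal{F})=0$ for all $i>0$, so that $L/\bar{\mathcal{V}}$ would also be affine. This contradicts Matsushima's criterion (in the Haboush--Richardson form valid in characteristic $p$): a reductive group modulo a non-trivial unipotent --- hence non-reductive --- subgroup is never affine. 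The main obstacle I anticipate is precisely this last descent step: identifying $f$ as an affine, faithfully flat fibration (which requires (i) to control the fibre) and transferring affineness from the source down to $L/\bar{\mathcal{V}}$ in a rigorous way; once that reduction is in place, Matsushima's classical theorem closes the argument.
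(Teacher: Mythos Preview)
Your argument is correct and reaches the same endpoint as the paper (Matsushima's criterion applied to $L/\bar{\mathcal V}$), but the route to that endpoint is genuinely different.

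The paper does \emph{not} fibre $\mathcal P/\mathcal V$ over $L/\bar{\mathcal V}$. Instead it lifts $\mathcal W:=\pi_L(\mathcal V)$ back into $\mathcal V$ as a closed subgroup: the surjection $\pi_L|_{\mathcal V}\colon \mathcal V\to\mathcal W$ is a torsor under its unipotent kernel, hence Zariski-trivial over the affine base $\mathcal W$, so it admits a section $\mathcal W\hookrightarrow\mathcal V$. Then $\mathcal W$ is exact in the unipotent group $\mathcal V$, and by transitivity of induction, exactness of $\mathcal V$ in $\mathcal P$ would force $\mathcal P/\mathcal W$ to be affine. The paper rules this out by noting that $\pi_{\mathcal U}$ induces a map $\mathcal P/\mathcal W\to\mathcal U$ whose closed fibre $L/\mathcal W$ is non-affine by Matsushima, hence $\mathcal P/\mathcal W$ itself cannot be affine.

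Your approach trades the section/torsor-triviality step for an fpqc descent of affineness along $f\colon \mathcal P/\mathcal V\to L/\bar{\mathcal V}$. This is arguably more conceptual (no need to realise $\bar{\mathcal V}$ inside $\mathcal V$), but the descent step you flag does need more than the injection $\mathcal F\hookrightarrow f_*f^*\mathcal F$: injectivity of sheaves does not by itself give injectivity on $H^i$. The clean justification is that $f$ is fppf locally trivial with affine fibre (as you say), hence an affine faithfully flat morphism; then the \v Cech--Amitsur complex for the cover $f$ computes $H^i(L/\bar{\mathcal V},\mathcal F)$ and is exact by faithful flatness, so Serre's criterion applies. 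With that in place your proof is complete. The paper's route avoids this descent entirely at the cost of the torsor-splitting trick; yours avoids the splitting at the cost of the descent.
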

\begin{proof}
ad i) and "$\Rightarrow$" of (ii) are immediate  due to the previous remark.

ad ii) "$\Leftarrow$:"  Let $\mathcal{U}:=\urad \mathcal{P}$ and $\mathcal{V} \not \subset \mathcal{U}$.
 Fix a Levi factor $\mathcal{L}$ of $\mathcal{P}$, and identify $\mathcal{P}=\mathcal{U} \rtimes \mathcal{L}$.
 Denote by $\pi_{\mathcal{L}}: \mathcal{P} \to \mathcal{L}$ the projection onto  $\mathcal{L}$  and by  $\pi_{\mathcal{U}}: \mathcal{P} \to \mathcal{U}$ 
 the projection onto  $\mathcal{U}$.
 Note that $\pi_{\mathcal{L}}$ is a morphism of algebraic groups while $\pi_{\mathcal{U}}$ is only a morphism of underlying varieties.
 
Let $\mathcal{W}= \pi_{\mathcal{L}}(\mathcal{V}) \subset \mathcal{L} \subset \mathcal{P}$, a smooth unipotent group, 
which is not $0$ due to the assumption. 
Consider now the map $\mathcal{P}/\mathcal{W} \to \mathcal{U}$ \markme{induced by $\pi_{\mathcal{U}}$}{hier wird $\pi_{\mathcal{U}}$ benutzt, hier ist etwas heikle Stelle, obwohl es trivial aussieht.}. 
Since the fibers of this map which are isomorphic to $\mathcal{L}/\mathcal{W}$ and hence \textit{not} affine (e.g. cf. \markme{\cite[ch.8 ,Theorem 7.1.]{santos}}{$G/H$ affin, $G$ reduktiv => $H$ reduktiv, 
reduktiv=geom. reduktiv (dies wird implizit benutzt), nicht zwingend zusammenhängend}),  
 $\mathcal{P}/\mathcal{W}$ is \textit{not} affine as well.  Moreover  $\pi_{\mathcal{L}}|_{\mathcal{V}}: \mathcal{V} \to \mathcal{W}$ 
 is a faithfully flat map of (unipotent) algebraic groups, 
and thus \markme{$\mathcal{V} \to \mathcal{W}$ is a torsor with respect to the kernel of this map, which is Zariski trivial over $\mathcal{W}$}{Wir wissen, dass solche Torsors über affine Schema trivial sind: Sie werden im wesentlichen über $H^1(W,O_W)$ klassifiziert für $W=G_a$). Dann $W \cong V \times \ker$ als Schemata, aber $\ker$ is normal in $W$, also $W \cong V \ltimes \ker$}. This implies 
 that there is a section $\mathcal{W} \hookrightarrow \mathcal{V}$,  and we can identify $\mathcal{W}$ with a closed subgroup of $\mathcal{V}$. 
 Hence $\mathcal{W}$ is exact in $\mathcal{V}$, and it
 follows that $\mathcal{V}$ is \textit{not} exact in $\mathcal{P}$, otherwise by transitivity of  induction we might have concluded $\mathcal{P}/\mathcal{W}$ is affine which is a contradiction.
 Hence $\mathcal{P}/\mathcal{V}$ is \textit{not} affine.
\end{proof}

\begin{remark}
From the construction one sees readily that an $\ez$-orbit in $G$ is affine if and only if the corresponding $P'$-orbit in $G/U$ is so. 
In its turn by  \ref{hsaffin}(ii) latter is solely determined by whether the reduced neutral component of stabilizer of its arbitrary point,
which is a unipotent algebraic group in our setting, lies in the unipotent radical $U'$ of $P'$.   
 \end{remark}

\fussy
\begin{theorem} \label{purity}
Suppose the condition (FC) \ref{fc} is verified.
Then the $\ez$- orbits in $G$ are affine. 
\end{theorem}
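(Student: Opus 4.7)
The plan is to combine Proposition \ref{hsaffin}(ii) with an induction along the reduction process of Remark \ref{redpr}. As noted in the Remark preceding the statement, the faithfully flat affine projection $G \to G/U$ converts the question into showing that $P'$-orbits in $G/U$ are affine, and by Proposition \ref{hsaffin}(ii) this amounts to the claim that, for every $w \in \iweilj$ and every $l \in L$, the reduced identity component of $S := \stab{P'}{[g_0 w l]}$ is contained in $U'$. I will induct on $\dim G$, equivalently on the depth of the reduction process in Remark \ref{redpr}.

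For the base case I take a terminal zip datum, which by Remark \ref{redpr} has the shape $(G,G,G,\varphi)$ with $U = U' = 1$. The action is then $\varphi$-conjugation; condition (FC) together with Corollary \ref{lsfin} forces it to be transitive, so the unique orbit is $G$ itself (which is affine), and the required containment $\red{S^0} \subset U' = 1$ is vacuous.

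For the inductive step I plan to combine the exact sequence \ref{stab} with the induction hypothesis applied to the reduced datum $\mathcal{Z}_n = (L,Q,Q',\psi)$. The one-to-one closure-preserving correspondence of Remark \ref{redpr} ensures that $\mathcal{Z}_n$ inherits (FC), so by induction the $\ezn$-orbit of $l$ in $L$ is affine, and Proposition \ref{hsaffin}(ii) yields $\red{\stab{\ezn}{l}^0} \subset \urad Q'$. Since the Lemma preceding Remark \ref{exactness} gives $\red{\ker e_n} = U' \cap \conj{g_0 w}{U} \subset U'$, the group $\red{S^0}$ is an extension of (a closed subgroup of) $\urad Q'$ by a subgroup of $U'$.

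The principal obstacle will be the lifting step: converting the fact that the second component $\varphi(\pr{L'}{p'})$ of $e_n(p')$ lies in $\urad Q'$ into the conclusion $p' \in U'$ itself. Writing $p' = u' \cdot \pr{L'}{p'}$ via $P' = U' \rtimes L'$ reduces this to showing $\pr{L'}{p'} = 1$. Here I will exploit that $\red{\ker \varphi}$ lies in the centre of $L'$ (as recorded in the proof of the Lemma above), so unipotency of $\varphi(\pr{L'}{p'})$ forces unipotency of $\pr{L'}{p'}$; an analysis of how $\pr{L'}{p'}$ sits in $L' \cap \conj{g_0 w}{P}$ using the Levi decomposition of the latter should then force $\pr{L'}{p'}$ to lie in a unipotent group intersecting $L'$ trivially, completing the induction.
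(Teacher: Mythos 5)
Your overall architecture matches the paper's: reduce via the affine quotient $G \to G/U$ to $P'$-orbits, induct along the reduction process of Remark \ref{redpr} with the base case $(G,G,G,\varphi)$ handled by (FC) and Corollary \ref{lsfin}, and feed the inductive containment through the exact sequence \ref{stab} together with $\red{\ker e_{n}}=U'\cap\conj{g_{0}w}{U}$. But there is a genuine gap exactly where you flag "the principal obstacle": your lifting step. First, you attach the inductive containment to the wrong component of $e_{n}$: for $p'$ in the stabilizer, the element controlled by the inductive hypothesis is the \emph{first} component $\pr{L}{\conj{w^{-1}g_{0}^{-1}}{p'}}$, which lies in $Q'=L\cap\conj{w^{-1}g_{0}^{-1}}{P'}$ and, inductively, in $\urad Q'=L\cap\conj{w^{-1}g_{0}^{-1}}{U'}$; the second component $\varphi(\pr{L'}{p'})$ lives in $Q$, not $Q'$. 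Second, even after correcting this, your plan only yields that $\pr{L'}{p'}$ is \emph{unipotent}, and unipotence is nowhere near $\pr{L'}{p'}=1$: the Levi factor $L'$ contains plenty of nontrivial unipotent elements, and no unipotent subgroup "intersecting $L'$ trivially" that would a priori contain $\pr{L'}{p'}\in L'$ is identified. The sentence "an analysis \ldots should then force" is a hope, not an argument, and I do not see how to complete it element by element.

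The missing idea is that the sequence \ref{stab} \emph{splits}: the map $f_{n}\colon\ezn\to P'\cap\conj{g_{0}w}{P}$, $(q',q)\mapsto\conj{g_{0}w}{q'}$, is a section of $e_{n}$, and it satisfies $f_{n}(\urad Q')=\conj{g_{0}w}{(L\cap\conj{w^{-1}g_{0}^{-1}}{U'})}\subset U'$. This exhibits $\neured{\stab{P'\cap\conj{g_{0}w}{P}}{[g_{0}wl]}}$ as the product of the two subgroups $U'\cap\conj{g_{0}w}{U}$ and $f_{n}(\neured{\stab{\ezn}{l}})$, both closed in $U'$, which is the whole content of the inductive step; no Jordan-decomposition argument is needed. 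Two smaller points: (a) extracting $\neured{\stab{Q'}{[l]}}\subset\urad Q'$ from affineness of the orbit via the "only if" direction of Proposition \ref{hsaffin}(ii) presupposes that this group is unipotent, which is part of what the induction is proving — it is cleaner to take the containment itself as the inductive statement, as the paper does; (b) you should also record the passage from $P'/\neured{\stab{P'}{[g_{0}wl]}}$ to the actual orbit $P'/\stab{P'}{[g_{0}wl]}$ (a finite surjection, handled by Chevalley's theorem) before invoking affineness of the former.
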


\begin{proof}
Let $\Orb$ be an  $\ez$-orbit of some element $g_{0}wl \in G$.  As the quotient morphism $G \to G/U$ is affine, 
it suffices to show that the corresponding $P'$-orbit of $[g_{0}wl] \in  G/U$ is affine. 

Consider the inclusion $\neured{\stab{P' \cap \conj{g_{0}w}{P}}{[g_{0}wl]}} \hookrightarrow$ $\stab{P' \cap \conj{g_{0}w}{P}}{[g_{0}wl]}$ 
whose  cokernel is a finite algebraic group over $k$, say $\gamma$. Then  $\Orb$ is isomorphic to the quotient of 
 $P'/\neured{\stab{P' \cap \conj{g_{0}w}{P}}{[g_{0}wl]}}$ by  $\gamma$.  

Thus, we get a finite surjective map  $P'/\neured{\stab{P' \cap \conj{g_{0}w}{P}}{[g_{0}wl]}} \to \Orb$.  
Hence, by Chevalley's theorem \cite[Theorem 12.39.]{wedh} it suffices to investigate the homogeneous 
space $P'/\neured{\stab{P' \cap \conj{g_{0}w}{P}}{[g_{0}wl]}}$ for affineness.

\textit{Claim:} $\neured{\stab{P' \cap \conj{g_{0}w}{P}}{[g_{0}wl]}}$ is a closed subgroup of $U'$.

\textit{Proof of the claim}: We proceed inductively. The  reduction process mentioned in Remark \ref{redpr} terminates if $P=P'=G$. 
In this case the stabilizer is finite by \textit{loc. cit.}.

Therefore the claim obviously holds in this case.

Due to the inductive assumption we have  $\neured{\stab{Q'}{[l]}} \subset \urad Q'=L \cap \conj{w^{-1}g_{0}^{-1}}{U'}$.
Note that exact sequence in \ref{stab} splits, the splitting morphism is given by the restriction of the map $f_{n}\colon \ezn \to  P' \cap \conj{g_{0}w}{P}$ given by
$(q',q) \mapsto \conj{g_{0}w}{q'}$. 

Thus we have $f_{n}(\urad Q') \subset U'$, and $\neured{\stab{P' \cap \conj{n}{P}}{[g_{0}wl]}}$
is the product of two closed subgroups of $U'$.

Therefore, $\neured{\stab{P' \cap \conj{g_{0}w}{P}}{[g_{0}wl]}}$ is a closed connected subgroup of $\urad P'$,
 and it is exact in $P'$ due to the remark \ref{hsaffin}(i), 
hence their quotient is affine.  
\end{proof}                  
\sloppy
If we abandon the condition (FC) \ref{fc} the claim of the previous theorem fails already in simplest cases:
\begin{counterexample}
Consider the zip datum  $\mathcal{Z}=(\GL{2},\GL{2},\GL{2},\mathrm{Id})$. Hence $\ez$-action amounts to the conjugation in  $\GL{2}$.
Observe that the morphism of $k$-varieties $\lambda\colon \GL{2} \to  \aff{2}$  given on $k$-valued points by $\lambda\colon \begin{pmatrix} a & b \\ c & d \end{pmatrix}
\mapsto (a+d, ad-bc)$  is constant on the $\GL{2}$-orbits.

Denote by $\Orb_{1}$ the orbit of the element  $\begin{pmatrix} 1 & 1 \\ 0 & 1 \end{pmatrix} \in \GL{2}$ of dimension $2$, 
 and  by $\Orb_{2}$ the orbit of $\mathrm{Id} \in \GL{2}$, which is just a point.
Note that $\Orb_{1} \cup \Orb_{2}= \lambda^{-1}(2,1)$ is closed in $\GL{2}$, and hence affine.

But $\Orb_{1}$ is not closed in $\GL{2}$, by conjugating by the matrices   $\begin{pmatrix} 1 & 0 \\ 0 & t \end{pmatrix} \in \GL{2}$  ($t \neq 0$)
 we conclude that   $\begin{pmatrix} 1 & t \\0 & 1 \end{pmatrix} \in \Orb_{1}$ for all $t \neq 0$, therefore 
we have $\mathrm{Id} \in \overline{\Orb_{1}} \setminus \Orb_{1}$.

It follows that $\Orb_{1}$ has codimension $2$ in its closure $\overline{\Orb_{1}}=\lambda^{-1}(2,1)$. Thus, $\Orb_{1}$ is not affine, 
otherwise it clearly contradicts  the algebraic version of Hartogs' theorem  \cite[Theorem 6.45.]{wedh}, see also Lemma \ref{pureimm}. 
\end{counterexample}

\section{Purity of $G$-zip stratification} \label{pure_strat}

Let $\underline{I}$ be a $G$-zip of type $\chi$ over $S$.
Recall that the stack $\gzipcat$ is isomorphic to  $\qstack{\ezchi}{G}$ \cite[Prop. 3.11.]{fzips} with underlying set $\Gamma \backslash \weili$.

$\underline{I}$ defines for all $\Gamma$ - orbits $[w]$  locally closed subschemes $\siw \overset{\jmath}{\hookrightarrow} S$
which are loci, where $\underline{I}$ has locally the constant isomorphism class $[w]$. We will recall an exact definition of $\siw$ below in this section.

There is the following set theoretically disjoint decomposition \begin{equation} \label{decom} S= \verein{[w] \in \weili/\Gamma} \siw. \end{equation}

First we explain  how the previous section implies that the immersion $\jmath$ is affine.

We recall construction of  $\siw$ \cite[Subsection 3.6]{fzips}: 

 A $G$-zip $\underline{I}$  over $S$  defines  by Yoneda lemma  the 1-morphism $\zeta\colon h_{S} \to \gzipcat$, 
where $h_{S}$ is the stack associated to the $k$-scheme $S$, i.e. to its functor of points. 

%

Due to \cite[Prop. 2.2.]{fzips} we can consider $[w]$ as a smooth, locally closed algebraic substack of $ \gzipcat$, 
let $\siw$ be the schematic inverse image $\zeta^{-1}([w])$.

\begin{theorem}
The immersion $\jmath\colon \siw \to S$  is affine.
\end{theorem}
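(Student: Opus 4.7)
The plan is to reduce the affineness of $\jmath$ to the affineness statement for $\ezchi$-orbits in $G$ proved in Theorem \ref{purity}, using the quotient stack description of $\gzipcat$ and base change.

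First, recall that $\gzipcat \cong \qstack{\ezchi}{G}$, so the smooth atlas $\pi\colon G \to \gzipcat$ is an $\ezchi$-torsor, in particular faithfully flat and (locally) of finite presentation. Under this isomorphism, the stratum $[w] \subset \gzipcat$ corresponds to the substack $[\ezchi \backslash \mathcal{O}_w]$, where $\mathcal{O}_w \subset G$ is the associated $\ezchi(\bar{k})$-orbit (considered as a locally closed subscheme of $G$, with its induced reduced structure or more intrinsically as the preimage $\pi^{-1}([w])$). Thus the locally closed immersion $[w] \hookrightarrow \gzipcat$ base-changes along $\pi$ to the inclusion $\mathcal{O}_w \hookrightarrow G$.

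The decisive step is showing that $[w] \hookrightarrow \gzipcat$ is an affine morphism. By Theorem \ref{purity} (applied in the Frobenius case, where the condition (FC) is automatic), $\mathcal{O}_w$ is an affine scheme. Since $G$ is also affine, the locally closed immersion $\mathcal{O}_w \hookrightarrow G$ is a morphism between affine schemes, hence affine. Affineness of a representable morphism of stacks is fpqc-local on the target, so from the Cartesian square
\begin{equation*}
\xymatrix{
\mathcal{O}_w \ar@{^{(}->}[r] \ar[d] & G \ar[d]^{\pi} \\
[w] \ar@{^{(}->}[r] & \gzipcat
}
\end{equation*}
and the fact that $\pi$ is a faithfully flat atlas, we conclude that $[w] \hookrightarrow \gzipcat$ is affine.

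Finally, by construction $\siw = \zeta^{-1}([w]) = h_S \times_{\gzipcat} [w]$, so the immersion $\jmath\colon \siw \hookrightarrow S$ is obtained by base change along $\zeta\colon h_S \to \gzipcat$ from the affine morphism $[w] \hookrightarrow \gzipcat$. Since affine morphisms are stable under base change, $\jmath$ is affine, as required. The main conceptual hurdle here is not analytical but bookkeeping: making precise the identification of $[w]$ with $[\ezchi \backslash \mathcal{O}_w]$ and invoking fpqc descent of affineness for representable morphisms of algebraic stacks; once this is done, the heavy lifting has already been carried out in Theorem \ref{purity}.
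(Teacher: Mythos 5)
Your proof is correct and follows essentially the same route as the paper: affineness of the orbit inclusion $\mathcal{O}_w \hookrightarrow G$ via Theorem \ref{purity}, faithfully flat descent along the atlas $G \to \qstack{\ezchi}{G}$ to conclude that $[w] \hookrightarrow \gzipcat$ is affine, and then base change along $\zeta$. The one point you gloss over is that for a general perfect $k$ the stratum $[w]$ corresponds to a $\Gamma$-orbit in ${}^I W$, so $\pi^{-1}([w])_{\bar{k}}$ is a finite disjoint union of $\ezchi(\bar{k})$-orbits rather than a single orbit; the paper handles this by first base-changing to $\bar{k}$ and treating each piece separately, and since a finite disjoint union of affine schemes is affine your argument is unaffected.
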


\begin{proof}
By Prop. 2.2. \textit{loc.cit.} the $\Gamma$-orbit of $w$ is a locally closed subset 
of underlying topological space $\overline{\Xi}$ of  $\gzipcat \otimes \bar{k}$ with no two elements
 are comparable with respect to $\preceq$.
Therefore, it is locally closed subset of $\overline{\Xi}$, and it  can
be described as a disjoint union of one-point reduced stacks (cf. \cite[Subsection 2.2]{fzips}).
 
Thus, simply by the base change we obtain the following scheme-theoretically disjoint decomposition: $\siw \otimes \bar{k}= \underset{w' \in \Gamma w}{\bigsqcup} \siwcl{w'}$.

Clearly,  $\jmath$ is affine if and only if each $\jmath \otimes id_{\bar{k}}|_{\siwcl{w'}}\colon \siwcl{w'} \to S$ is affine for $w' \in \Gamma w$.

Thus, without loss of generality we can assume $k$ is algebraically closed. 
In this case $\Gamma=1$ and $[w]$ corresponds to a single orbit $\Orb$ of $G$.

The quotient map $G \to  \gzipcat$ is a representable faithfully flat stack morphism.   
Then the affineness of the morphism   $\Orb \hookrightarrow G$ (see \cite[ch. 12, Prop.12.3.(3)]{wedh}) implies by faithfully flat descent 
that the schematic stack morphism $[w] \to \gzipcat$ is affine.

So, we have the following diagram:

\begin{displaymath}
\xymatrix{
  \siw \ar@^{(->}[r]^{\jmath} \ar[d] & S \ar[d]^{\zeta} \\
  [w]   \ar@^{(->}[r] & \gzipcat
}
\end{displaymath}
Hence the morphism  $\jmath\colon \siw \to S$ is affine simply by the definition.
 
\end{proof}

The affineness of the inclusion $\jmath$ implies the following result:

\begin{corollary}
Suppose that  $S$ is a locally noetherian $k$-scheme, $Z$  a closed subscheme of $S$ of codimension $\geq 2$, which contains no embedded components of $S$
that the restriction of $\underline{I}$ to $S \setminus Z$ is fppf locally constant, 
then $\underline{I}$ is  fppf locally constant.   
\end{corollary}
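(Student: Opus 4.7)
The plan is to identify the unique stratum into which $S\setminus Z$ falls, show that this stratum is in fact open in $S$, and then combine the affineness provided by Theorem B with an algebraic Hartogs-type principle to conclude the stratum fills out all of $S$.

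First, since $\underline{I}|_{S \setminus Z}$ is fppf locally constant, the associated classifying morphism $\zeta\colon S \to \qstack{\ezchi}{G}$ sends $S\setminus Z$ set-theoretically to a single orbit $[w_{0}]$. By the decomposition \eqref{decom} this forces $S \setminus Z \subseteq \siwcl{[w_{0}]}$, and consequently every other stratum $\siwcl{[w]}$ with $[w] \neq [w_{0}]$ is contained in the closed subscheme $Z$.

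Next I would argue that $\siwcl{[w_{0}]}$ is open in $S$. The codimension-$\geq 2$ hypothesis together with the condition that $Z$ contains no embedded associated point of $S$ implies that $S \setminus Z$ is schematically dense in $S$, in particular topologically dense. Since $\siwcl{[w_{0}]}$ is only locally closed a priori, write $\siwcl{[w_{0}]} = V \cap C$ with $V \subseteq S$ open and $C \subseteq S$ closed; then $S \setminus Z \subseteq C$ combined with density forces $C = S$, so $\siwcl{[w_{0}]} = V$ is open.

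Now by Theorem B the open immersion $\jmath\colon \siwcl{[w_{0}]} \hookrightarrow S$ is affine. Its closed complement sits inside $Z$, hence has codimension $\geq 2$ in $S$ and still avoids every embedded associated point of $S$. The heart of the argument is then to invoke an algebraic Hartogs-type statement (Lemma \ref{pureimm}, in the spirit of \cite[Theorem 6.45.]{wedh}): an affine open immersion whose closed complement has codimension $\geq 2$ and avoids all embedded associated points of the target is an isomorphism. Indeed, $\jmath$ being affine yields the identification $\siwcl{[w_{0}]} \cong \mathrm{Spec}_{S}(\jmath_{*}\ofun{\siwcl{[w_{0}]}})$, and under the stated hypotheses the canonical map $\ofun{S} \to \jmath_{*}\ofun{\siwcl{[w_{0}]}}$ is an isomorphism, whence $\siwcl{[w_{0}]} = S$.

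Once $\siwcl{[w_{0}]} = S$, the very definition of the strata yields that $\underline{I}$ is fppf locally of type $[w_{0}]$ on all of $S$, which is the desired conclusion. The main obstacle is the Hartogs step: the ``no embedded components'' hypothesis is essential in the possibly non-reduced, non-normal setting, where one cannot appeal to classical Hartogs directly but must use the carefully stated $S_{2}$-flavoured extension that the quoted lemma encapsulates.
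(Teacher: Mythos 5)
Your proof is correct and follows essentially the paper's own route: single out the one stratum containing $S\setminus Z$, note that the no-embedded-components hypothesis makes $S\setminus Z$ schematically dense, use the affineness of the stratum from Theorem B, and conclude via Lemma \ref{pureimm} that the complement of the stratum, lying in $Z$ and hence of codimension $\geq 2$, must be empty. The only cosmetic differences are your intermediate openness argument and your rephrasing of Lemma \ref{pureimm} as a Hartogs-type extension statement — the claim that $\ofun{S}\to\jmath_{*}\ofun{\siwcl{[w_{0}]}}$ is an isomorphism is not something you can assert directly on a possibly non-normal $S$, but it is precisely what the lemma's reduction to the complete local normal case supplies, and you correctly defer to the lemma for that.
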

\begin{proof}
That $Z$ contains no embedded components implies that the scheme theoretic closure of $S \setminus Z$ coincides with $S$. 
The claim of Corollary  is then an immediate consequence from the following lemma.
\end{proof}

\begin{lemma} \label{pureimm}
 Let $X$ be a scheme, $Y$ be an locally-noetherian scheme,  $X \hookrightarrow Y$ be an affine immersion.
Denote by $\overline{X}$ the schematic closure of $X$ in $Y$, and let $Z$ be an irreducible component of $\overline{X} \setminus X \neq \varnothing$.
Then $\codim{Z}{\overline{X}}=1$. 
\end{lemma}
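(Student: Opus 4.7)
The plan is to localize at the generic point of $Z$ and reduce to the classical statement that the punctured spectrum of a Noetherian local ring of dimension $\geq 2$ is never affine. First I would replace $Y$ by the schematic closure $\overline{X}$: the immersion factors as $X \hookrightarrow \overline{X} \hookrightarrow Y$ with the first map an open immersion and the second a closed (hence affine) immersion, so affineness of the composition together with a direct check on affine opens of $Y$ gives that $X \hookrightarrow \overline{X}$ is again affine. After this reduction, $X$ is an affine open topologically dense subscheme of a locally Noetherian scheme $Y$ (for the quasi-compact immersion $X \hookrightarrow Y$ the schematic image agrees with the topological closure), and it suffices to prove $\codim{Z}{Y}=1$ for any irreducible component $Z$ of the closed complement $Y \setminus X$.

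Next I would localize at the generic point $z$ of $Z$ and set $A := \mathcal{O}_{Y,z}$, a Noetherian local ring with maximal ideal $\mathfrak{m}$ corresponding to $z$. Base-changing the affine open immersion $X \hookrightarrow Y$ along $\Spec A \to Y$ yields an affine open subscheme $U \subset \Spec A$, and the key set-theoretic claim is $U = \Spec A \setminus \{\mathfrak{m}\}$. To verify this I would argue: any $y \in \Spec A$ is a generization of $z$ in $Y$; if $y \notin X$, then $\overline{\{y\}} \subset Y \setminus X$ is an irreducible closed subset containing $z$, so it is contained in the (unique) irreducible component of $Y \setminus X$ through $z$, namely $Z$, and hence $y=z$ by choice of $z$ as generic point of $Z$. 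Thus $\Spec A \setminus \{\mathfrak{m}\}$ is affine. Moreover $\dim A \geq 1$, since density of $X$ in $Y$ forces $X$ to contain every generic point of $Y$, while $z \notin X$.

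To conclude I would invoke Serre's affineness criterion together with local cohomology. Affineness of $U$ gives $H^{i}(U,\mathcal{O}_U)=0$ for all $i\geq 1$, and the standard long exact sequence identifies $H^{i}(U,\mathcal{O}_U) \cong H^{i+1}_{\mathfrak{m}}(A)$ for $i\geq 1$, hence $H^{j}_{\mathfrak{m}}(A)=0$ for every $j\geq 2$. Grothendieck's non-vanishing theorem, $H^{\dim A}_{\mathfrak{m}}(A) \neq 0$, then forces $\dim A \leq 1$, and combined with $\dim A \geq 1$ this yields $\codim{Z}{\overline{X}} = \dim \mathcal{O}_{\overline{X},z} = 1$, as required.

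The main obstacle I expect is the identification $U = \Spec A \setminus \{\mathfrak{m}\}$ in the middle step: it uses essentially that $Z$ is a full irreducible component of the boundary $\overline{X}\setminus X$ and not merely some irreducible subset, which is precisely what excludes other boundary strata from meeting $\Spec A$. Everything else---the factorization through $\overline{X}$, Serre vanishing, the local cohomology sequence, Grothendieck non-vanishing---is standard machinery and should go through without further complication.
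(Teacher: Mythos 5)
Your proof is correct, and after the common first steps (factoring through $\overline{X}$ to get an affine open dense immersion $X\hookrightarrow\overline{X}$, then localizing at the generic point $z$ of $Z$ to reduce to an affine punctured spectrum $U=\Spec A\setminus\{\mathfrak m\}$ with $\dim A\geq 1$) it diverges genuinely from the paper's argument. The paper rules out $\codim{Z}{\overline{X}}\geq 2$ by contradiction via a chain of reductions — pass to $A/\mathfrak p$ for a suitable minimal prime, complete, normalize using excellence of the completion — and then invokes the algebraic Hartogs theorem for normal Noetherian schemes to contradict affineness of the punctured spectrum; the codimension-$0$ case is handled separately by a density argument. You instead apply Serre vanishing on the affine $U$, the long exact sequence $H^{i}(U,\mathcal O_U)\cong H^{i+1}_{\mathfrak m}(A)$ for $i\geq 1$, and Grothendieck's non-vanishing theorem $H^{\dim A}_{\mathfrak m}(A)\neq 0$ to conclude $\dim A=1$ directly. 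This buys you something real: Grothendieck non-vanishing holds for an arbitrary Noetherian local ring, so you sidestep the integrality, completion and finite-normalization steps entirely (the paper's normalization step quietly needs the completion to be reduced and its normalization finite, which is where excellence enters), and you obtain the exact value of the codimension in one stroke rather than excluding the two bad ranges separately. You also spell out the set-theoretic identification $U=\Spec A\setminus\{\mathfrak m\}$ — which rests on $Z$ being a full irreducible component, so that no other boundary stratum survives the localization — a point the paper passes over without comment. Both routes are standard machinery; yours is the more robust of the two.
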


\begin{proof}
Since an affine morphism is quasi-compact, $X \hookrightarrow Y$ factorizes through the inclusion $\overline{X} \hookrightarrow Y$,
 and one has an affine open immersion $X \hookrightarrow \overline{X}$ (cf. \cite[Remark 10.31.]{wedh}).

First assume that $\codim{Z}{\overline{X}}=0$. As $Z$ is closed in $\overline{X}$ of codimension $0$ it must be an irreducible component of $\overline{X}$.
But $Z \cap X= \varnothing$, hence $X$ is not dense in $\overline{X}$. This gives a contradiction to the assumption.
  
Suppose there is a component $Z$ of $\overline{X} \setminus X$ such that  $\codim{Z}{\overline{X}} \geq 2$. Replacing $\overline{X}$
by $\Spec \ofun{\overline{X},Z}$ and $X$ respectively by $X \cap \Spec \ofun{\overline{X},Z}$ we can assume that
 $\overline{X} = \Spec A$ for a local noetherian ring $A$ of dimension at least two and $X= \Spec A \setminus \{z\}$, where $z$ is a closed point of the codimension at least two. 
Again replacing $A$ with the quotient $A/\mathfrak{p}$, where $\mathfrak{p}$ is a minimal prime ideal of $A$, 
we can  furthermore assume that $A$ is integral.

Next, we replace $A$ with the completion  $\hat{A}$ which is possible since the morphism $\Spec \hat{A} \to \Spec A$ 
is faithfully flat (cf.\cite[Prop. B.40]{wedh}), i.e. it preserves the codimensions, and  affine morphisms are stable under base change,  
we can assume $A$ is a complete local noetherian ring.  

 Then $\Spec A$ is excellent due to \cite[Theorem 12.51]{wedh}.  This implies that the normalization $\Spec A' \to \Spec A$ is a finite morphism (cf.\cite[Theorem 12.51]{wedh}).

Replacing $A$ with its normalization we can assume that $A$ is normal.  

Eventually, we get an  affine inclusion $X:=\Spec A \setminus \{z\} \hookrightarrow \Spec A$ of normal noetherian schemes.
By an algebraic analogue of Hartogs' theorem \cite[Theorem 6.45.]{wedh} we conclude that  $A \isoeq \Gamma (X, \ofun{X})$ which is clearly a contradiction to $X$ is affine.
   
\end{proof}

\section{Applications}

\subsection{Purity of level-1-stratification}
Let $S$ be an $\fp$ -scheme and $X$ over $S$ be a Barsotti-Tate group. 
Further let $X[1]$ be the corresponding truncated Barsotti-Tate group of the level $1$, i.e. $p$-torsion of $X$ . 
The strata of level-1-stratification of $S$ corresponds to the loci of $S$ where $X[1]$ has a constant isomorphism class.  

In this subsection, we demonstrate an easy way to show the purity of this stratification by utilizing the fact that its covariant Diedonn\'{e} crystal 
carries  an $F$-zip structure. However, this approach cannot be extended to the study of the higher level stratifications since 
the corresponding  Diedonn\'{e} crystals do not  carry an $F$-zip structure any more.  

In the next subsection we will reprove and generalize this result for stratifications of the higher levels 
using explicit construction of certain moduli spaces of Barsotti-Tate groups. 
Despite some redundancy  we intend to show both approaches in this paper.
The first one, presented in this subsection, may be preferable in the case of level-1-stratification due to its simplicity.    
 
We denote by  $X[1]^{\vee}$ the Cartier dual of $X[1]$.

Let $\diedonne (X)$ be its covariant Diedonn\'{e} crystal  and  $\mathcal{M}(X):=\diedonne (X)(S,S,0)$ 
be its evaluation at the trivial object $(S,S,0)$ of crystalline site.

$\mathcal{M}(X)$ is a local free $\os$- module of the rank equal to  the height $h$ of $X$.

Moreover, $\mathcal{M}(X)$ is endowed with an $F$-zip structure in the following way \cite[Subsection 9.3]{fzips}: 

There is an exact sequence (cf. \cite[Corollaire 3.3.5., Proposition 5.3.6]{bbm}) which is functorial in $X$ and compatible with base change $S' \to S$:
\begin{displaymath}
\xymatrix{
  0 \ar[r] & \omega_{X[1]^{\vee}} \ar[r] & \mathcal{M}(X) \ar[r] & \mathrm{Lie}(X[1]) \ar[r] & 0,
}
\end{displaymath}
 where $\omega_{X[1]^{\vee}}= e^{*} \Omega_{X[1]^{\vee}/S}$ is the $\os$-module of invariant differentials of $X[1]^{\vee}$.   

The relative Frobenius $F_{X/S}\colon X \to \Frp{X}$ and the Verschiebung $V_{X/S}\colon \Frp{X} \to X$ give  rise to $\os$-linear homomorphisms
 $\mathcal{F}:= \mathcal{M}(V): \Frp{\mx} \to \mx$ resp.  $\mathcal{V}:= \mathcal{M}(F): \mx \to \Frp{\mx}$.

Note that the roles of the Frobenius and the Verschiebung are switched in the covariant  Diedonn\'{e} theory.

Moreover, 
$\im \mathcal{V} = \ker \mathcal{F} = \Frp{\omega_{X[1]^{\vee}}}$ and $\im   \mathcal{F} = \ker \mathcal{V}$ are local direct summands 
of $\Frp{\mx}$, respectively $\mx$.

One obtains the corresponding $F$ -zip $\fzipnew{\mx}= \fzipdatanew{\mx}{\varphi}$ with a descending filtration 
$C^{0}= \mx$, $C^{1}= \omega_{X[1]^{\vee}}$ and $C^{2}=0$ and an ascending filtration $D_{-1}=0$, $D_{0}=\ker \mathcal{V}$  and $D_{1}=\mx$
with the isomorphisms $\varphi_{0}\colon \Frpb{C^{0}/C^{1}} = \Frp{\mx}/ \ker \mathcal{F} \isotomap{\mathcal{F}} \im \mathcal{F} = \ker \mathcal{V} \isoeq D_{0}/D_{-1}$.
and  $\varphi_{0}\colon \Frpb{C^{1}/C^{2}} = \im \mathcal{V}  \isotomap{V^{-1}} D_{1}/D_{0}$.

As explained in the previous section, an $F$-zip structure gives a $\GL{h}$-zip structure that also gives a decomposition \ref{decom} of $S$.

Recall that there is an equivalence of categories between truncated Barsotti-Tate groups over a perfect field and a the modulo $p$ reductions of 
the covariant  Diedonn\'{e} modules.

Therefore the decomposition pieces $S^{[w]}$ where $\mathcal{M}(X)$ has fppf-locally a constant isomorphism class are exactly the loci 
where $X[1]$ has a constant one.

Now let $S$ be a locally noetherian $\fp$ -scheme. The purity of the inclusion  $S^{[w]} \hookrightarrow S$ implies in particular that whenever $X[1]$ has a constant isomorphism class over $S \setminus S'$ 
where $S'$ is closed of codimension at least two in $S$, it has  a constant isomorphism class over $S$ overall. 

\subsection{Purity of level-$m$-stratifications}
Let $k$ be a perfect field of characteristic $p>0$,  $\btsm$ be the  moduli stack of  $m$-truncated Barsotti-Tate groups of dimension $d$ and codimension $n-d$ over $k$.
That is, for each $k$-scheme $S$  the groupoid $\btsm(S)$ is the category
 of $m$-truncated Barsotti-Tate groups over $S$ as above with morphisms in  $\btsm(S)$ being isomorphisms of truncated Barsotti-Tate groups. 
Moreover, by \cite[Prop 1.8.]{oortstrata} and by base change,  $\btsm$ is a smooth algebraic stack of finite type over $k$.

Our primary goal here is to sketch briefly the construction of a quotient stack closely related to the stack $\btsm$, to relate latter 
to some algebraic zip datum, and to deduce certain purity results. 
 
For a commutative $\fp$-algebra $R$ and $m \in \nat$  denote by $W(R)$ and $W_m(R)$ the ring of Witt-vectors resp. the ring of Witt-vectors of length $m$ with coefficients in $R$.
Furthermore, let $\sigma_{R}\colon W(R) \to W(R)$ (resp. $\sigma_{R}\colon W_m(R) \to W_m(R)$) be the Frobenius map induced by the Frobenius endomorphism (i.e. $p$-power map) on $R$ and $V: W(R) \to W(R)$
(resp. $\sigma_{R}\colon W_m(R) \to W_{m+1}(R)$) is the Verschiebung; so it holds $\sigma_R \circ V=p$.

Let $\grpk$ be a smooth affine group scheme of finite type over $\Spec(W(k))$.
We denote by $W_m(\grpk)$ the smooth affine algebraic group over $k$ which represents the functor $R \mapsto \grpk(W_{m}(R))$. 

Let  now $X_m$ be an $m$-truncated Barsotti-Tate group over $\Spec R$ of dimension $d$ and height $n$. It can be assigned to the truncated 
display of level $m$ over $R$ (see \cite{lau}), which is given by an element of $\GL{n}(W_m(R))$ for a suitable choice of basis.

E. Lau shows in  \textit{loc.cit.}  Lemma 3.5 that the category $\mathrm{Disp}_m(k)$ of truncated displays of level $m$ over $k$  is equivalent to truncated Diedonn\'{e} modules of
level $m$ over $k$. 
Moreover, by fixing the dimension $d$ one has  the description of the corresponding substack $\mathrm{Disp}_{m,d}$
as a quotient stack(see the construction  in the proof of \textit{loc. cit. }  Prop. 3.15) as follows. 
Let $K_m= W_m(\GL{n})$ and define an algebraic group $K_{\mu,m}$ by 
\begin{align*} 
& K_{\mu,m}(R) = 
\left \{ \left (
\begin{array}{c|c}
A_{d \times d} &   B_{d \times n-d} \\ \hline 
C_{n-d \times d} &  D_{n-d \times n-d}                                                                                               
\end{array} \right ) \in K_m  \right \},
 \text{where } A_{d \times d},  C_{n-d \times d},\\ 
& D_{n-d \times n-d} \text{ are sub-matrices over $W_m(R)$,}\\
& \text{and  $B_{d \times n-d}=(b_{ij})$  with $b_{ij} \in V(W_m(R)) \subset W_{m+1}(R) $ of size} \\ 
& \text{as specified by the lower indices}.
\end{align*}

Let $\imath \colon K_{\mu,m} \to K_m$ be the map induced by reduction modulo $p^m$ followed by the inclusion and denote by $\sigma_{\mu} \colon K_{\mu,m} \to K_m$  
the morphism given by \begin{align*}
                       \begin{pmatrix}  A & B \\
                                        C & D  
                       \end{pmatrix}   \mapsto 
                      \begin{pmatrix}
                        \sigma_R (A) & V^{-1}(B) \\
                        p \sigma_R(C) &  \sigma_R(D) 
                       \end{pmatrix}.
                        \end{align*}
Here the dimensions of sub-matrices  are the same as above,  and $p$, $\sigma_R$, $V^{-1}$ are applied entrywise.

The action of  $K_{\mu,m}$ on $K_m$ is given by  $(x,z) \mapsto \imath(x)  z \sigma_{\mu}(x)^{-1}$, $x \in K_{\mu,m}, z \in K_m$.

The related quotient stack is denoted  by  $\qstack{K_{\mu,m}}{K_m}$, and we have $\mathrm{Disp}_{m,d}=\qstack{K_{\mu,m}}{K_m}$.

Unless stated otherwise, for the rest this subsection we will assume that $k=\bar{k}$.
\begin{example}
Let $m=1$.  Consider the  $K_{\mu,1}$ -action on $K_1= \GL{n}$.

Let $P'=\imath(K_{\mu,1})$  and $P$ be an opposite parabolic of $P'$ with respect to the diagonal torus.
Further let $L= P' \cap P$ be their common Levi factor.

Now consider an zip datum  $\zd=(K_1, P, P', \bar{\sigma})$, 
where $\bar{\sigma}\colon L \to L$ is the reduction modulo $p$ of $\sigma_R$.

Note that $K_{\mu,1}$ acts on $K_1$ precisely with the same orbits as $\zd$ on $K_1$.

Thus, the orbits of $K_{\mu,1}$-action  are affine due to Theorem \ref{purity}.


This example  reproves in particular purity of the level-1-stratification.
\end{example}

In order to show the purity of level-$m$-stratification for $m \geq 2$ consider the smooth morphism $K_{m} \to K_{1}$
of algebraic groups over $k$ which is induced by the reduction modulo $p$. Moreover by reducing modulo $p$ one
obtains a $K_{\mu,m}$-action on $K_{1}$ making $K_{m} \to K_{1}$ $K_{\mu,m}$-equivariant. Thus, it maps $K_{\mu,m}$-orbits in 
$K_{m}$ to $K_{\mu,1}$-orbits in $K_{1}$.

Our aim is to establish the affineness of these orbits. 

 The following general fact \cite[Ch. 11, Theorem 8.4.]{santos} will be further useful:
\begin{lemma} \label{slice}
Let  $\mathcal{G}$ be a smooth affine algebraic group over an algebraically closed field $k$  
and $\mathcal{U} \subset \mathcal{G}$ be a closed  smooth connected unipotent subgroup.

Then the homogeneous space $\mathcal{G}/\mathcal{U}$ is an affine variety  if and only if there exist a morphism of varieties $\Phi\colon \mathcal{G} \to  \mathcal{U}$ such that
$\Phi(xu)=\Phi(x)u$ for all $x \in \mathcal{G}$ and $u \in \mathcal{U}$.
\end{lemma}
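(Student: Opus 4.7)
My plan is to prove the equivalence by recognizing that a morphism $\Phi\colon\mathcal{G}\to\mathcal{U}$ satisfying $\Phi(xu)=\Phi(x)u$ is precisely the data of a trivialization of the right $\mathcal{U}$-torsor $\mathcal{G}\to\mathcal{G}/\mathcal{U}$; once this is set up, one direction becomes a geometric manipulation and the other reduces to a cohomological triviality statement for unipotent torsors over affine schemes.

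For the implication ``$\Leftarrow$'', I would start from a given $\Phi$ and define $s\colon\mathcal{G}\to\mathcal{G}$ by $s(x)=x\,\Phi(x)^{-1}$. The equivariance relation $\Phi(xu)=\Phi(x)u$ yields $s(xu)=s(x)$, so $s$ descends to a morphism $\bar{s}\colon\mathcal{G}/\mathcal{U}\to\mathcal{G}$ that is a section of the quotient map. The morphism $\mathcal{G}/\mathcal{U}\times\mathcal{U}\to\mathcal{G}$, $(y,u)\mapsto \bar{s}(y)u$, is then an isomorphism of varieties with inverse $x\mapsto(\bar{x},\Phi(x))$. Restricting to $\mathcal{G}/\mathcal{U}\times\{e\}$ realizes $\mathcal{G}/\mathcal{U}$ as a closed subvariety (the image of $\bar{s}$) of the affine variety $\mathcal{G}$, whence $\mathcal{G}/\mathcal{U}$ is affine.

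For the implication ``$\Rightarrow$'', assume $\mathcal{G}/\mathcal{U}$ is affine. The projection $\pi\colon\mathcal{G}\to\mathcal{G}/\mathcal{U}$ is a right $\mathcal{U}$-torsor for the fppf (even Zariski) topology. The key step is to produce a section $\bar{s}\colon\mathcal{G}/\mathcal{U}\to\mathcal{G}$. Once such a section exists, I define $\Phi(x):=\bar{s}(\bar{x})^{-1}x\in\mathcal{U}$, and the $\mathcal{U}$-equivariance $\Phi(xu)=\bar{s}(\bar{x})^{-1}xu=\Phi(x)u$ is immediate from the fact that $\bar{s}(\overline{xu})=\bar{s}(\bar{x})$.

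The main obstacle is therefore the existence of the section $\bar{s}$, i.e.\ the triviality of $\mathcal{U}$-torsors over the affine base $\mathcal{G}/\mathcal{U}$. The standard route is to use that over the algebraically closed field $k$ the smooth connected unipotent group $\mathcal{U}$ admits a composition series by closed normal subgroups with successive quotients isomorphic to $\mathbb{G}_a$ (Rosenlicht). Using the long exact cohomology sequence associated with each extension, the triviality of the torsor reduces inductively to the vanishing $H^1_{\mathrm{fppf}}(Y,\mathbb{G}_a)=H^1(Y,\mathcal{O}_Y)=0$ for an affine scheme $Y=\mathcal{G}/\mathcal{U}$. This step is the technical core of the argument; once it is granted, the section $\bar{s}$ exists and the construction of $\Phi$ above completes the proof.
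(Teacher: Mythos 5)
Your proof is correct. Note first that the paper does not prove this lemma at all: it is quoted verbatim from Ferrer Santos--Rittatore \cite[Ch.~11, Theorem 8.4]{santos} (where it appears in the context of exact subgroups and observability), so there is no in-paper argument to compare against. Your torsor-theoretic argument is the standard self-contained route and all the steps go through. In the direction ``$\Leftarrow$'', the map $s(x)=x\Phi(x)^{-1}$ does satisfy $s(xu)=s(x)$, it descends to a section $\bar{s}$ because $\pi\colon\mathcal{G}\to\mathcal{G}/\mathcal{U}$ is a faithfully flat categorical quotient, and the resulting identification $\mathcal{G}\cong(\mathcal{G}/\mathcal{U})\times\mathcal{U}$ exhibits $\mathcal{G}/\mathcal{U}$ as the closed subvariety $\bar{s}(\mathcal{G}/\mathcal{U})=\Phi^{-1}(e)$ of the affine variety $\mathcal{G}$ (alternatively: a section of a separated morphism is a closed immersion), hence affine. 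In the direction ``$\Rightarrow$'', the two ingredients you invoke are exactly what is needed and are available here: over the perfect field $k$ the smooth connected unipotent group $\mathcal{U}$ is split, i.e.\ admits a normal filtration with successive quotients $\mathbb{G}_a$, and the inductive dévissage through the exact sequences of pointed sets in fppf cohomology, together with $H^1_{\mathrm{fppf}}(Y,\mathbb{G}_a)=H^1(Y,\mathcal{O}_Y)=0$ for the affine base $Y=\mathcal{G}/\mathcal{U}$, trivializes the torsor; the formula $\Phi(x)=\bar{s}(\bar{x})^{-1}x$ then visibly lands in $\mathcal{U}$ and has the required equivariance. The only points worth spelling out in a final write-up are the descent of $s$ along the fppf quotient and the normality of the terms of the filtration (needed for the nonabelian cohomology sequence), both of which you have implicitly and correctly used.
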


The proof of the next proposition is largely influenced by \cite[Subsection 5.1.]{vasiu}.
\begin{prop} \label{afflevel}
 $K_{\mu,m}$-orbits in $K_{m}$ are affine for all $m \in \nat$.
\end{prop}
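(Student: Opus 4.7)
The plan is induction on $m$, with the base case $m=1$ provided by the preceding example: the $K_{\mu,1}$-orbits in $K_1=\GL{n}$ are identified there with $\ez$-orbits for the connected zip datum $\mathcal{Z}=(K_1,P,P',\bar{\sigma})$, and these are affine by Theorem \ref{purity}. For $m\geq 2$ I would exploit the smooth $K_{\mu,m}$-equivariant truncation morphism $\pi\colon K_m\to K_{m-1}$ (equivariance being with respect to the natural projection $K_{\mu,m}\to K_{\mu,m-1}$), together with the fact that its kernel and the kernel $N'_m:=\ker(K_{\mu,m}\to K_{\mu,m-1})$ on the acting side are smooth connected unipotent vector groups.

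Given an orbit $\Orb\subset K_m$, it projects to a $K_{\mu,m-1}$-orbit $\pi(\Orb)\subset K_{m-1}$ which is affine by the inductive hypothesis. Following the strategy of the proof of Theorem \ref{purity}, the central goal is then to show that for any $z\in K_m$ the reduced neutral component $\neured{\stab{K_{\mu,m}}{z}}$ is contained in the unipotent radical of $K_{\mu,m}$. Once that is in hand, Proposition \ref{hsaffin}(i) applied to $K_{\mu,m}/\neured{\stab{K_{\mu,m}}{z}}$, together with the finite-cover reduction at the start of the proof of Theorem \ref{purity}, yields affineness of $\Orb$.

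To establish the stabilizer containment I would descend along the central filtration $K_{\mu,m}\supset\ker(K_{\mu,m}\to K_{\mu,m-1})\supset\cdots\supset 1$. The image of the stabilizer at the bottom, in $K_{\mu,1}$, is by the base case a stabilizer under the zip-datum action and lies in the unipotent radical of the parabolic $P'$ by the analysis inside the proof of Theorem \ref{purity}. At each higher layer, the intersection $\stab{K_{\mu,m}}{z}\cap N'_m$ is cut out by the twisted linearized equation coming from $\sigma_\mu$, and the factor $p$ in the lower-left block together with $V^{-1}$ in the upper-right block ensure that the relevant endomorphism is nilpotent; by Corollary \ref{lscor} and Remark \ref{lsrem}, the corresponding Lang--Steinberg map has finite stabilizer, so this intersection is a closed subgroup of the unipotent group $N'_m$ and hence of $\urad K_{\mu,m}$. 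The hard part will be keeping track of how the twisting at each layer interacts with the inductive stabilizer description across levels; this is precisely the Diedonn\'{e}-theoretic bookkeeping carried out in \cite[Subsection 5.1]{vasiu}. An alternative route, requiring essentially the same calculations, is to construct directly the slice morphism of Lemma \ref{slice} for the action of $N'_m$ on the fibers of $\pi$, and then combine fiberwise affineness with affineness of $\pi(\Orb)$ via the equivariant map $\Orb\to\pi(\Orb)$.
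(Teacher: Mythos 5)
Your plan is workable, and its key mechanism differs from the paper's. The paper reduces in a single step from level $m$ to level $1$ (rather than inducting through $K_{m-1}$) and never proves the containment $\neured{\stab{K_{\mu,m}}{z}} \subset \urad K_{\mu,m}$ that you make your central goal. Instead it observes that $K_{\mu,m} \to K_{\mu,1}$ is a trivial torsor under its smooth connected unipotent kernel $\mathcal{N}$, so $K_{\mu,m} \isoeq \mathcal{N} \rtimes K_{\mu,1}$; it then forms the unipotent group $\mathcal{U} = \mathcal{N} \rtimes \neured{\grpi}$ (where $\grpi$ is the level-$1$ stabilizer), lifts the slice morphism of Lemma \ref{slice} for $\neured{\grpi}$ in $K_{\mu,1}$ to a slice for $\mathcal{U}$ in $K_{\mu,m}$, and concludes by transitivity of exactness, since $\neured{\grpi_m}$ is a closed subgroup of the unipotent group $\mathcal{U}$ and hence exact in it. This is essentially the ``alternative route'' you mention at the end, except that the slice is built globally on $K_{\mu,m}$ out of the level-$1$ slice rather than fiberwise over $\pi(\Orb)$. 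What the paper's route buys is modularity: it uses the affineness of the level-$1$ orbits only as a black box (equivalently, exactness of $\neured{\grpi}$ in $K_{\mu,1}$). Your route requires reopening the proof of Theorem \ref{purity} to extract the containment of the reduced identity component of the stabilizer in $U'$ and then transporting it through the identification of $K_{\mu,1}$ with the zip group $\ez$ to obtain $\neured{\grpi} \subset \urad K_{\mu,1}$; this does work, and once it is in hand your conclusion is shorter, since $\neured{\grpi_m}$ then lands in the preimage of $\urad K_{\mu,1}$, which is exactly $\urad K_{\mu,m}$ (a connected normal unipotent subgroup with reductive quotient the Levi $L$), and Proposition \ref{hsaffin}(i) finishes.

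One step of your sketch is a red herring and should be cut: the Lang--Steinberg and nilpotency discussion of $\stab{K_{\mu,m}}{z} \cap N'_m$. That intersection is automatically a closed subgroup of the connected normal unipotent group $N'_m \subset \urad K_{\mu,m}$, so no finiteness of a Lang--Steinberg stabilizer is needed there (nor is that intersection finite in general). The only nontrivial input is the behaviour of the \emph{image} of the stabilizer at the bottom level, which your base case already supplies; there is no layer-by-layer twisted-linearization bookkeeping left to do.
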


\begin{proof}
 Let $\Orb$ be an $K_{\mu,m}$-orbit in $K_{m}$ that maps to some $K_{\mu,1}$-orbit  $\orb$ in   $K_{1}$.
Let $\grpi_m$ be the stabilizer of some closed point $x$ of $\Orb$ and  $\grpi$ be the stabilizer of the image of $x$ in $\orb$.
As explained  before (cf. remark \ref{exactness}, proof of Theorem \ref{purity}), the orbit $\Orb$ is affine if and only if 
$\neured{\grpi_m}$ is exact in $K_{\mu,m}$.
We also know that  $\neured{\grpi}$ is exact in  $K_{\mu,1}$.

Consider the exact sequence of smooth affine algebraic groups: 
  \begin{displaymath} 
\xymatrix @C=1.5pc{
 1 \ar[r] & \mathcal{N} \ar[r] &  K_{\mu,m} \ar[rr]^{\mod p}& &  K_{\mu,1} \ar[r] & 1 
 }
\end{displaymath}

 where $\mathcal{N}$ is the kernel of  $K_{\mu,m} \to K_{\mu,1}$, which is smooth connected unipotent algebraic group.

Note that   $K_{\mu,m}$ is $\mathcal{N}$-torsor over  $K_{\mu,1}$,  and it is trivial 
being a torsor of an algebraic unipotent group over affine $k$-scheme.
Hence we have $K_{\mu,m} \isoeq \mathcal{N} \rtimes  K_{\mu,1}$.

Pulling back this exact sequence by the inclusion   $\neured{\grpi} \hookrightarrow K_{\mu,1}$ we get the following  
exact sequence of smooth affine algebraic groups:  

  \begin{displaymath} 
\xymatrix  @C=1.5pc{
 1 \ar[r] & \mathcal{N} \ar[r] &  \mathcal{U} \ar[r] &   \neured{\grpi} \ar[r] & 1 
 }
\end{displaymath}

As before we have  $\mathcal{U} \isoeq  \mathcal{N} \rtimes  \neured{\grpi}$, and $\mathcal{U}$ is a smooth unipotent connected algebraic group.
 
Since $\neured{\grpi}$ is exact in $K_{\mu,1}$, there exists by \ref{slice} a morphism of varieties $\Phi\colon \neured{\grpi} \to  K_{\mu,1}$ 
such that $\Phi(xu)=\Phi(x)u$ for all $x \in  K_{\mu,1}$, $u \in \neured{\grpi}$ . It yields the morphism of varieties $\Phi'\colon \mathcal{N} \rtimes  \neured{\grpi} \to  \mathcal{N} \rtimes  K_{\mu,1}$ 
given by $(\mathrm{Id}, \Phi)$. $\Phi'$ obviously satisfies the condition of Lemma \ref{slice}. This implies affineness of the quotient  $K_{\mu,m}/\mathcal{U}$.
Therefore $\mathcal{U}$ is exact in $K_{\mu,m}$.

Now observe that $\neured{\grpi_m} \subset \mathcal{U}$ and hence unipotent. So   $\neured{\grpi_m}$ is exact in $\mathcal{U}$ (cf. remark \ref{exactness}).
Thus we conclude  $\neured{\grpi_m}$ is exact in $K_{\mu,m}$ by transitivity of induction. Hence $\Orb$ is affine. 
\end{proof}

The main result of \cite{lau}, Theorem 4.5. establishes a connection between the stack of truncated displays
and the stack of truncated Barsotti-Tate groups: It implies there exists a smooth morphism $\varLambda\colon \btsm \to  \mathrm{Disp}_{m,d}$ 
which is an equivalence on geometric points. 

Note that the stacks $\mathrm{Disp}_{m,d}$ and $\btsm$ for $m>1$ contain infinitely many geometric points, hence we cannot simply run through arguments
 of Section \ref{pure_strat} passing from the affineness of the orbits to the purity of the corresponding strata.

Nevertheless, a slight modification of the arguments makes it possible. In this part we follow closely to \cite[Subsections 2.2-2.3]{vasiu}.

Let  $X'$ be an object in  $\btsm(k)$, and  $X:= \varLambda X'$.  Note that $(X',k)$ and $(X,k)$ define  the points of $\btsm$ resp. $\mathrm{Disp}_{m,d}$ 
in sense of \cite[Definition 5.2]{laumon}.  The corresponding fppf $1$-morphisms of stacks over $k$ $X' \colon \Spec k \to \btsm$ and $X \colon \Spec k \to \mathrm{Disp}_{m,d}$ 
admit canonical factorizations $\Spec k \overset{\bar{X'}}{\twoheadrightarrow} \mathring{X'} \overset{\imath'}{\hookrightarrow}  \btsm$ 
and $\Spec k  \overset{\bar{X}}{\twoheadrightarrow} \mathring{X} \overset{\imath}{\hookrightarrow}  \mathrm{Disp}_{m,d}$,
where  $\mathring{X'}$ and $\mathring{X}$ are residue gerbes of points $X'$  and $X$, see \cite[Section 11]{laumon}.
Note that $\bar{X'}$ and  $\bar{X}$ are fppf epimorphisms, and $\imath$, $\imath'$ are monomorphisms.

As $\btsm$ and  $\mathrm{Disp}_{m,d}$ are locally noetherian stacks over $k$,  the points $X',X$ are algebraic by \cite[Th\'{e}or\`{e}me 11.3]{laumon}. Thus  $\mathring{X'}$ and $\mathring{X}$ are fppf
gerbes of finite  type over $\Spec k$.

Further let $\mathfrak{X}_m$ be a $m$-truncated  Barsotti-Tate group  over $S$. 
Thus  $\mathfrak{X}_m$ defines the stack morphism  $\zeta_{\mathfrak{X}_m} \colon S \to \btsm$.

Essentially, our situation is summarized by the following diagram with 2-Cartesian squares: 
\begin{displaymath}
\xymatrix{
 S_{X'}^m \ar[r] \ar@^{(->}[d] & \mathring{X'} \ar@^{(->}[d]  \ar[r]  & \mathring{X}  \ar@^{(->}[d] &\Orb \ar[l] \ar@^{(->}[d]\\
   S \ar[r]^{\zeta_{\mathfrak{X}_m}} &  \btsm \ar[r]^{\varLambda}  & \mathrm{Disp}_{m,d}  & K_m \ar[l]
}
\end{displaymath}
 
Let us explain it in more detail:
\begin{enumerate}
 \item[1)] By $\Orb$ is denoted the  $K_{\mu,m}$-orbit  of an arbitrary lift of  $X$ in $K_m$.
 \item[2)] $\Orb$ is the fiber product in the right square of the above diagram by \cite[Exemple 11.2.2]{laumon}.
 \item[3)] The $\Orb$ is affine by Prop. \ref{afflevel} and smooth over $k$, and $\Orb \hookrightarrow  K_m$ is an affine immersion of noetherian schemes 
 since $\Orb$ is affine and $K_m$ separated (cf. \cite[ch. 12, Prop.12.3.(3)]{wedh}).
 \item[4)] The quotient map $K_m \to \mathrm{Disp}_{m,d}$ is smooth and surjective, in particular a faithfully flat stack morphism.
 \item[5)] By faithfully flat descent  $\mathring{X}  \hookrightarrow \mathrm{Disp}_{m,d}$ is representable  by an affine immersion of finite presentation. 
 \item[6)] $\Orb  \to \mathring{X}$ is a smooth and surjective stack morphism by base change.
 \item[7)] $\mathring{X}$ is smooth over $k$ since $\Orb$ is smooth over $k$ and by 6).
 \item[8)] Similarly by smoothness $\varLambda$, $\mathring{X'}$ is smooth over $k$ since  $\mathring{X'} \hookrightarrow  \mathring{X}$ is so, and by 6).
 \item[9)] Consider the fiber product $\fiber{\mathring{X}}{\btsm}{\mathrm{Disp}_{m,d}}$. Since $\varLambda$ is smooth, and by base change of 5) follows that 
  it is a reduced substack of $\btsm$. Moreover, since  $\varLambda$ induces an equivalence on the geometrical points, the middle square commutes, 
  so there is a 1-morphism $\mathring{X} \to \fiber{\mathring{X}}{\btsm}{\mathrm{Disp}_{m,d}}$ which is an isomorphism of the reduced one-point stacks.  
 \item[10)] The \textit{level $m$ stratum $S_{X'}^m$ of $\mathfrak{X}_m /S$ with respect to $X'$ } is defined by the fiber product in the left square of the above diagram. 
 By the definition of  a residue gerbe, the morphism of $k$-schemes $f \colon T \to S$ factors through $S_{X'}^m$ if and only if $f^{*}\mathfrak{X}_m$ is locally for fppf topology isomorphic
to $\fiber{X'}{T}{k}$ as $m$-truncated Barsotti-Tate groups over $T$.
 \item[11)] $S_{X'}^m \hookrightarrow S$ is an affine immersion of finite presentation by  base change of the morphism in 4). In particular one can view  
$S_{X'}^m$ as a subscheme of $S$.
\end{enumerate}

 Let $k$ be again an arbitrary perfect field of characteristic $p>0$.  In this case we can also define the level $m$ stratum $S_{X'}^m$ as in 9).

Then we have:
\begin{theorem}
The immersion $S_{X'}^m \hookrightarrow S$ is affine.
\end{theorem}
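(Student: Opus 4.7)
The plan is to reduce the statement to the algebraically closed case already handled by items 1)--11), using faithfully flat descent along $\Spec \bar{k} \to \Spec k$. Affineness of a morphism of schemes is local on the target for the fpqc topology, so it suffices to prove that the base change $S_{X'}^{m} \times_{k} \bar{k} \hookrightarrow S \times_{k} \bar{k}$ is affine. Write $\bar S := S \times_{k} \bar k$, $\bar{\mathfrak X}_m := \mathfrak X_m \times_{k} \bar k$ viewed as an $m$-truncated Barsotti--Tate group over $\bar S$, and $\bar X' := X' \times_{k} \bar k \in \btsm(\bar k)$.

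The key step is to identify this base change with the corresponding stratum over $\bar k$, i.e.\ to establish the equality
\[
S_{X'}^{m} \times_{k} \bar{k} \;=\; (\bar S)^{m}_{\bar X'}
\]
as subschemes of $\bar S$. By the 2-Cartesian square of item 10), this reduces to the stacky statement that the base change of the residue gerbe $\mathring{X'} \hookrightarrow \btsm$ along $k \to \bar k$ equals the residue gerbe $\mathring{\bar X'} \hookrightarrow (\btsm)_{\bar k}$. Although a priori distinct $k$-forms of $\bar X'$ may give distinct $k$-isomorphism classes in $\btsm(k)$, all such forms share the same image in the topological space $|\btsm|$ and become mutually isomorphic after extension to $\bar k$; therefore the reduced one-point substack $\mathring{X'}$ base-changed to $\bar k$ coincides with $\mathring{\bar X'}$. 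Consequently the defining fibre product of $S_{X'}^m$ base-changes to the defining fibre product of $(\bar S)^{m}_{\bar X'}$.

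With this identification in hand, items 1)--11) apply directly over $\bar k$: the $K_{\mu,m}$-orbit $\Orb$ through a lift of $\bar X'$ is affine by Proposition \ref{afflevel}, the immersion $\Orb \hookrightarrow K_m$ is an affine immersion since $K_m$ is separated, and affineness descends along the faithfully flat quotient $K_m \to \mathrm{Disp}_{m,d}$ and then along the smooth surjection $\varLambda$, producing the affine immersion $\mathring{\bar X'} \hookrightarrow (\btsm)_{\bar k}$. Pulling back along the classifying morphism $\zeta_{\bar{\mathfrak X}_m}\colon \bar S \to (\btsm)_{\bar k}$ yields the affineness of $(\bar S)^{m}_{\bar X'} \hookrightarrow \bar S$, and fpqc descent along $\bar k / k$ delivers the desired affineness of $S_{X'}^{m} \hookrightarrow S$.

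The main obstacle in this plan is the stack-theoretic bookkeeping at the second step: one must verify carefully that no extra $\bar k$-points creep into $\mathring{X'} \times_{k} \bar k$ beyond those of $\mathring{\bar X'}$, and that the reduced substack structure is preserved by scalar extension. Once this identification of residue gerbes is firmly established, the purity assertion over arbitrary perfect $k$ becomes a formal consequence of the algebraically closed case already proved, together with fpqc descent of affineness.
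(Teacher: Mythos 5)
Your proposal is correct and follows essentially the same route as the paper: base change along $\Spec \bar{k} \to \Spec k$, apply the affineness of $\Orb \hookrightarrow K_m$ and the descent chain through $\mathrm{Disp}_{m,d}$ and $\btsm$ (items 1)--11)) over $\bar{k}$, then descend affineness of the immersion along the faithfully flat extension $\bar{k}/k$. The paper states this in one line, while you additionally spell out the compatibility of the residue gerbe (hence of the stratum $S_{X'}^m$) with the scalar extension, which is a reasonable point to make explicit but does not change the argument.
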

 \begin{proof}
  By base change and by 11), we conclude that  $\basechg{S_{X'}^m}{\bar{k}} \hookrightarrow \basechg{S}{\bar{k}}$ is an affine immersion of finite presentation, and that it by 
faithfully flat descent implies that   $S_{X'}^m \hookrightarrow S$ is so as well and, in particular, pure.
 \end{proof}

%
%

\section{$F$-zip structures on de Rham cohomology}
Let $S$ be an $\fp$-scheme throughout this subsection.

A vast amount of geometric examples of $F$-zips comes from the structures which naturally arise on the de Rham cohomology. 

For an arbitrary $\fp$ scheme $Y$ denote by  $F_Y\colon Y \to Y$  the absolute Frobenius.  

Furthermore let $X$ be a smooth proper scheme over $S$, and  denote by  $f\colon X \to S$ a structure morphism, and by $F=F_{X/S}\colon X \to \Frp{X}$  
the relative Frobenius.

Thus,  we have the following commutative diagram.
\begin{displaymath}
\xymatrix{
X \ar[rd]^{F=F_{X/S}} \ar@/^2pc/[rrd]^{F_X} \ar@/_2pc/[rdd]^{f} & & \\
& \Frp{X} \ar[r]^{\sigma_X} \ar[d]^{\Frp{f}} &  X \ar[d]^{f} \\
& S  \ar[r]^{F_{S}} & S
}
\end{displaymath}


We start with a recollection  the basic facts (cf. \cite[Sect. 6]{ben}, \cite[Subsect. 1.1.]{torsten}).
 
The de Rham cohomology $\hdr{\bullet}{X/S}:= \mathbf{R}f_{*}\kd{\bullet}{X}{S}$ is the hypercohomology of the complex
 $\kd{\bullet}{X}{S}$ with respect to the left exact functor $f_{*}$  going from the category of $\ofun{X}$-modules to the category of $\os$-modules.

Note that the coboundary maps of $\kd{\bullet}{X}{S}$ are $f^{-1}(\os)$-linear but not $\ofun{X}$-linear. 

There are two exact sequences converging to  $\hdr{\bullet}{X/S}$, namely the Hodge-de-Rham sequence
 \[ \hdss{a}{b}= R^{b}f_{*}(\kd{a}{X}{S}) \Rightarrow  \hdr{a+b}{X/S}\]
and the conjugate spectral sequence
 \[ \conjss{a}{b}= R^{a}f_{*}(\mathcal{H}^{b}(\kd{\bullet}{X}{S})) \Rightarrow  \hdr{a+b}{X/S}.\]

Recall, there is  a morphism of the graded $\ofun{\Frp{X}}$-algebras: \[\gamma\colon \osum{i \in \nato}{\kd{i}{\Frp{X}}{S}} \to \osum{i \in \nato}{\mathcal{H}^{i}F_{*} \kd{\bullet}{X}{S}}.
 \] Moreover, if $f$ is smooth, $\gamma$ is an isomorphism denoted by $C^{-1}$ and called the\textit{ Cartier isomorphism} (cf. \cite[Section 3]{luc}).

In addition, it has the following properties:
\begin{enumerate}
 \item[(i)] $C^{-1}$ restricts on the  zero-graded piece to the algebra isomorphism $F^{*}\colon \ofun{\Frp{X}} \to F_{*} \ofun{X}$.
 \item[(ii)] $C^{-1}$ maps $d(\sigma^{-1}(x)) \in \kd{1}{\Frp{X}}{S}$  to the class of $x^{p-1}dx$ in ${\mathcal{H}^{1}F_{*} \kd{\bullet}{X}{S}}$.
\end{enumerate}

Note, that $C^{-1}$ induces an isomorphism of $\os$-modules
\begin{align*}
 &R^{a}\Frp{f_{*}}\kd{b}{\Frp{X}}{S} \isoto R^{a}\Frp{f_{*}}(\mathcal{H}^{b}F_{*} \kd{\bullet}{X}{S}) 
= R^{a}\Frp{f_{*}}F_{*}(\mathcal{H}^{b} \kd{\bullet}{X}{S})=\\
&R^{a}f_{*}(\mathcal{H}^{b} \kd{\bullet}{X}{S})=\conjss{a}{b}
\end{align*}

Moreover, if we assume that $\os$-modules  $R^{a}f_{*} \kd{b}{X}{S}$ are flat (this holds in particular if they are locally free), then we have:

\[R^{a}\Frp{f_{*}}\kd{b}{\Frp{X}}{S} \isoeq R^{a}\Frp{f_{*}}\sigma_{X}^{*} \kd{b}{X}{S} \isoeq  F_S^* f_* R^{a}\kd{b}{X}{S} \isoeq \Frpb{\hdss{b}{a}}\]

Thus under this assumption we get an isomorphism: 
\begin{equation} \label{ssrel}
             \varphi^{ab}\colon \Frpb{\hdss{b}{a}} \isoto  \conjss{a}{b}
\end{equation}
 
\begin{remark} \label{hfcf}
 Fix an integer $n \in \nato$. The definition of spectral sequence, applied to the case
 of Hodge-de Rham spectral sequence implies that the limit term $M:=\hdr{n}{X/S}$ is endowed with a descending filtration $\mathrm{Fil}^{\bullet}$  
such that $\mathrm{Fil}^{k}M/\mathrm{Fil}^{k+1}M \isoeq \hdssinf{k,}{n-k}$. 
We call $\mathrm{Fil}^{\bullet}$ the\textit{ Hodge filtration}.

On the other hand, the conjugate spectral sequence furnishes us with the second descending filtration $\mathrm{Fil'}^{\bullet}$ such that $\mathrm{Fil'}^{k}M/\mathrm{Fil'}^{k+1}M \isoeq \conjssinf{k,}{n-k}$.

Note that $\hdssinf{k,}{n-k}$ and  $\conjssinf{k,}{n-k}$ are in general the subquotients of $\hdss{k,}{n-k}$ resp. $\conjss{k,}{n-k}$. 
\end{remark}

In the classical situation, there are some discrete conditions for degeneration at $E_1$, which arise directly from the construction of the spectral sequence:  

\begin{remark} \label{degeqk}
 Let $K$ be an arbitrary field. Suppose $\bar{X}$ is a proper scheme over $K$, and let $b_n:= \Dim{K} \hdr{n}{\bar{X}}$. We define  Hodge numbers $h^{a,b}$ for $a,b \in \nato$  by  
  and $h^{a,b}:= \Dim{K} H^{b}(\bar{X},\kd{a}{\bar{X}}{K})$, $a,b \in \nato$.
They satisfy the following inequalities: $b_n\leq \nsum{a,b \in \nato, a+b=n} h^{a,b}$ for all $n \in \nato$, 
 and the Hodge-de Rham spectral sequence degenerates in $E_{1}$ if and only if  the latter inequalities are equalities for all $n \in \nato$ . 
 
\end{remark}

The following remark  generalizes the above one: 
\begin{remark} \label{degeq}
Let $R$ be a commutative ring, and $f \colon \tilde{X} \to \tilde{S}$ be a proper smooth scheme over $\tilde{S}:=\Spec R$. Denote by  $\rmodart{R}$  the  category of $R$-modules of finite length. 
We recall  that   $\rmodart{R}$ is an abelian category, and its objects are  both Noetherian and Artinian $R$-modules, or equivalently, 
finitely generated and Artinian ones.  

Length  $\lgth \colon \rmodart{R} \to \nato$  is  additive on exact sequences of objects in  $\rmodart{R}$: Hence for two objects $M$ and $N$ 
in $\rmodart{R}$ such that $N$ is proper subquotient of $M$ holds: $\lgth N < \lgth M$. 

Suppose  that  $\hdss{a}{b}= R^{b}f_{*}(\kd{a}{\tilde{X}}{\tilde{S}})$ and $\hdr{n}{\tilde{X}/\tilde{S}}$  are objects in   $\rmodart{\ofun{\tilde{S}}}$  for all $a,b \in \nato$, 
e.g. it is a case if $\ofun{\tilde{S}}$ is an Artinian ring.

In view of the above and Remark \ref{hfcf} we arrive at the following criterion for  degeneration  the Hodge-de Rham spectral sequence at $E_1$:

We have:
\[\lgth \hdr{n}{\tilde{X}/\tilde{S}} \leq \nsum{a,b \in \nato, a+b=n} \lgth R^{b}f_{*}(\kd{a}{\tilde{X}}{\tilde{S}}),\]
and the Hodge-de Rham spectral sequence degenerates in $E_{1}$ if and only if   the latter inequalities are equalities for all $n \in \nato$.
\end{remark}

Now we recall the following definition.
\begin{definition}
Let $f\colon X \to S$ be a smooth proper morphism of arbitrary schemes. We say $f$ \textit{satisfies  condition} (D) if the following two conditions hold:
\begin{enumerate}
 \item[(a)] The $\ofun{S}$-modules $\hdss{a}{b}= R^{b}f_{*}(\kd{a}{X}{S})$ are locally free of finite rank for all $a,b \in \nato$. 
 \item[(b)] The Hodge-de Rham spectral sequence degenerates at $E_{1}$.
\end{enumerate}
 \end{definition}

\begin{remark} \label{ssbasech}
The part (a) of condition (D) implies that the formation of $\hdss{a}{b}$ commutes with an arbitrary base change $S' \to S$.

The condition (D) remains true after an \textit{arbitrary} base change $S' \to S$ (see \cite[2.2.1.11]{katz}). 
\end{remark}

\begin{remark} \label{Dprop}
Condition (D) implies the isomorphism \ref{ssrel}, and the conjugate spectral sequence also degenerates at $E_{2}$ (see \cite[Proposition 2.3.2]{katz}).  
\end{remark}
  
Since, by a general principle, the formation of $\hdssr{r}{a}{b}$ commutes with any \textit{flat} base change,
 and a condition of the degeneration at $E_1$  expressed as $\hdssr{1}{a}{b}=\hdssr{2}{a}{b}$ 
for all $a,b \in \nato$ is stable under faithfully flat descent, holds the following:  
\begin{remark} \label{fpqcdeg}
Let $f\colon X \to S$ as in the above definition, and $S' \to S$ be an fpqc morphism.  Then  $f_{S'}\colon \fiber{X}{S'}{S} \to S'$ satisfies (D) iff $f$ satisfies (D).  
\end{remark}

These nice properties of the spectral sequence provided $f$ satisfies condition (D) give birth  the following  $F$-zip structure
on $\hdr{\bullet}{X/S}$.

\begin{construction}
 Fix an integer  $0 \leq n \leq 2 \dim(X/S)$.

Suppose  $f\colon X \to S$ satisfies condition (D).   We associate to $f$ an $F$-zip $\fzipdata{M}{\varphi}$
as follows: Set  $M=\hdr{n}{X/S}$.  As the Hodge-de Rham spectral sequence degenerates at $E_{1}$ we have $\hdssinf{k,}{n-k}=\hdss{k,}{n-k}$,
by Remark \ref{Dprop} we also have $\conjssinf{k,}{n-k}=\conjss{k,}{n-k}$.

Let a descending filtration $C^{\bullet}$ be the Hodge filtration, and we define an ascending filtration $D_{\bullet}$ by 
 $D^{i}M= \mathrm{Fil}'^{n-i}M$, $i \in \zz$, where $\mathrm{Fil}'$ is defined as in Remark \ref{hfcf}. 

Note that $\varphi$ is given by the isomorphisms \ref{ssrel} simply by setting $\varphi_{i}= \varphi^{n-i,i}$. 
  
\end{construction}

\begin{definition} \label{liftchar0}
Let $f\colon X \to S$ be a smooth proper morphism.

We say $\tilde{X}$ is a \textit{lift of} $X$ \textit{in zero characteristic} if there exist a  scheme  $\tilde{S}$ 
\textit{flat} over $\Spec \zz_{p}$,  and a scheme morphism $S \to  \tilde{S}$ such that one has the following diagram with a \textit{Cartesian} square:

\begin{displaymath} \label{dg}
\xymatrix{
X \ar[d]^{f} \ar[r] & \tilde{X} \ar[d]^{\tilde{f}}  \\
S   \ar[r] &  \tilde{S} \ar[d]^{flat} \\
   &  \Spec \zz_{p} 
 }
\end{displaymath}

\end{definition}

Presuming the existence of a lift in zero characteristic we are looking for easily testable sufficient conditions under which the condition (D) is met.
First we will need a few technical facts which essentially only rephrase the content of \cite[Ch.2, §5]{mumf}.
\begin{lemma} \label{cohc}
 Let $f\colon X \to S$ be a proper smooth morphism of locally noetherian schemes with $S= \Spec A$ affine. 
Furthermore let $B$ be an arbitrary $A$-algebra, and $Y= \Spec B$. 
Then there is a finite complex $0 \rightarrow \mathcal{F}^0 \rightarrow \mathcal{F}^1 \rightarrow \ldots \rightarrow \mathcal{F}^m  \rightarrow 0$
of finitely generated projective $A$-modules such that one has for all $n \in \nato$ the natural isomorphism of $B$-modules:
$\hdr{n}{\fiber{X}{Y}{S}/Y} \cong H^{n}(\ten{\mathcal{F}^{\bullet}}{B}{A})$.
\end{lemma}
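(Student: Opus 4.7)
The plan is to realize $\hdr{\bullet}{X/S}$ as the cohomology of an explicit bounded complex $T^\bullet$ of flat $A$-modules whose formation commutes with arbitrary base change, and then to invoke the classical universal-complex argument (Mumford, \emph{Abelian Varieties} Ch.~II §5; see also EGA III, 6.10/7.7) to replace $T^\bullet$ by a quasi-isomorphic bounded complex $\mathcal{F}^\bullet$ of finitely generated projective $A$-modules.

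First I would pick a finite affine open cover $\mathcal{U}=\{U_1,\ldots,U_N\}$ of $X$, which exists because $f$ is proper, hence $X$ is quasi-compact over the noetherian base $\Spec A$. Since $f$ is separated, every intersection $U_{i_0\ldots i_p}:=U_{i_0}\cap\cdots\cap U_{i_p}$ is again affine. Form the Čech–de Rham bicomplex $C^{p,q}=\prod_{i_0<\cdots<i_p}\kd{q}{X}{S}(U_{i_0\ldots i_p})$ with the Čech horizontal differential and the de Rham vertical differential, and let $T^\bullet=\mathrm{Tot}(C^{\bullet,\bullet})$. Since $\kd{q}{X}{S}$ is quasi-coherent and each intersection is affine (so higher Čech cohomology vanishes), the standard double-complex spectral sequence yields $H^n(T^\bullet)\isoeq\hdr{n}{X/S}$. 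Each entry $C^{p,q}$ is $A$-flat because $\ofun{X}(U_{i_0\ldots i_p})$ is $A$-flat by smoothness of $f$ and $\kd{q}{X}{S}$ is locally free of finite rank. The complex $T^\bullet$ is bounded (of length $\le N+\dim(X/S)$), and its cohomology is finitely generated over $A$ as a subquotient of the $A$-coherent $R^bf_*\kd{a}{X}{S}$ via the Hodge--de Rham spectral sequence (Grothendieck's finiteness theorem for proper morphisms).

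For base change, for any $A$-algebra $B$ the pulled-back cover $\{U_i\times_S Y\}$ is a finite affine cover of $X_Y:=\fiber{X}{Y}{S}$ with affine intersections; because formation of $\kd{q}{X}{S}$ commutes with base change one obtains a canonical identification $\kd{q}{X_Y}{Y}((U_{i_0\ldots i_p})_Y)\isoeq\ten{\kd{q}{X}{S}(U_{i_0\ldots i_p})}{B}{A}$. Consequently $\ten{T^{\bullet}}{B}{A}$ is \emph{literally} the analogous Čech--de Rham bicomplex for $X_Y/Y$, hence $H^n(\ten{T^{\bullet}}{B}{A})\isoeq\hdr{n}{X_Y/Y}$ naturally in $B$.

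Finally I would invoke Mumford's lemma: given a bounded complex $T^\bullet$ of $A$-flat modules over the noetherian ring $A$ with finitely generated cohomology, there exists a bounded complex $\mathcal{F}^\bullet$ of finitely generated projective $A$-modules together with a quasi-isomorphism $\mathcal{F}^\bullet\to T^\bullet$ that remains a quasi-isomorphism after tensoring with any $A$-algebra. Combining with the previous paragraph delivers the asserted natural isomorphism $H^n(\ten{\mathcal{F}^{\bullet}}{B}{A})\isoeq\hdr{n}{X_Y/Y}$. The main technical obstacle is securing the base-change universality of the replacement $\mathcal{F}^\bullet\to T^\bullet$: it is proved by a descending induction on degree, choosing each $\mathcal{F}^p$ to consist of finitely many free generators that surject onto the relevant cocycles and cohomology classes of $T^\bullet$, and exploiting the simultaneous flatness of the $T^p$ together with finite generation of the cohomology to ensure the construction truncates at the bottom.
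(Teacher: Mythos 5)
Your proof is correct and takes essentially the same route as the paper: both form the total complex of the Čech--de Rham bicomplex for a finite affine cover, observe it is a bounded complex of flat $A$-modules computing $\hdr{\bullet}{X/S}$ compatibly with arbitrary base change, and then apply Mumford's replacement lemma (\emph{Abelian Varieties}, Ch.~II \S 5, Lemmas 1--2) to pass to a quasi-isomorphic bounded complex of finitely generated projective $A$-modules. The only difference is that you spell out the finite generation of the cohomology and the universality of the quasi-isomorphism, which the paper leaves implicit in the citation.
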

\begin{proof}
Let $\mathfrak{U}=\{U_{i}\}_{i \in I}$ be a finite affine cover of $X$, and consider the finite Čech bicomplex $\check{C}^{\bullet \bullet}= \check{C}^{\bullet}(\mathfrak{U}, \kd{\bullet}{X}{S})$.
Further let $\mathcal{F}^{\bullet}$ be a total complex associated to the bicomplex $\check{C}^{\bullet \bullet}$.  As $\kd{a}{X}{S}$  are locally free $\ofun{X}$-modules, and since $X$ is flat over $S$,
they are flat over $S$. Moreover, as $f\colon X \to S$ is separated, hence $\mathcal{F}^{\bullet}$ is a complex of flat $A$-modules, which represents 
 the complex   $\mathbf{R}f_{*}\kd{\bullet}{X}{S}$ in the derived category. Moreover, by \cite[§5 Lemma 1,2]{mumf} we can 
 assume that $\mathcal{F}^{\bullet}$ is a complex of finitely generated projective $A$-modules, and for all $A$-algebras $B$, $\{\fiber{U_{i}}{Y}{S}\}_{i \in I}$ is the cover of $\fiber{X}{Y}{S}$, and  $\ten{\check{C}^{\bullet}(\mathfrak{U}, \kd{\bullet}{X}{S})}{B}{A}$ is the corresponding Čech bicomplex.
The associated total complex is just $\ten{\mathcal{F}^{\bullet}}{B}{A}$, and so we have:
$\hdr{n}{\fiber{X}{Y}{S}/Y}=\mathbf{R}^{n}f_{*}\kd{\bullet}{\fiber{X}{Y}{S}}{Y}\cong  H^{n}(\ten{\mathcal{F}^{\bullet}}{B}{A})$  as required. Moreover, this isomorphism is obviously functorial in $B$.
\end{proof}
%
%

The previous lemma  leads us to the following semi-continuity result for the dimension of the cohomology groups of the fibers 
whose proof follows verbatim along the same lines as \cite[Ch. II, §5, Corollary]{mumf}.
For the reader's convenience, we will sketch it.
\begin{prop} \label{semicont}
Let $f\colon X \to S$ be a proper smooth morphism of locally noetherian schemes.  Then  we have:

i) For each $n \in \nato$,  the function $S \to \zz$ defined by  $s \mapsto \Dim{\kappa(s)}\hdr{n}{X_s}$ is upper semi-continuous.

ii) The Euler characteristic $\chi\colon S \to \zz$, $s \mapsto \nsum{j \in \nato}(-1)^{j}\Dim{\kappa(s)}\hdr{j}{X_s}$ is locally constant on $S$.

\end{prop}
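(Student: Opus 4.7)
The plan is to reduce both assertions to the classical semi-continuity statement for the cohomology of a bounded complex of finitely generated projective modules, using Lemma \ref{cohc} as the essential input; this is exactly the strategy of \cite[Ch.~II, §5]{mumf} cited in the statement.

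First, since both statements are local on $S$, I may assume $S = \Spec A$ with $A$ noetherian. Applying Lemma \ref{cohc} produces a finite complex
\[ 0 \to \Fa^0 \xrightarrow{d^0} \Fa^1 \xrightarrow{d^1} \cdots \xrightarrow{d^{m-1}} \Fa^m \to 0 \]
of finitely generated projective $A$-modules together with natural isomorphisms $\hdr{n}{\fiber{X}{Y}{S}/Y} \cong H^n(\ten{\Fa^\bullet}{B}{A})$ for every $A$-algebra $B$ with $Y = \Spec B$. Taking $B = \kappa(s)$ for each $s \in S$ (an $A$-algebra via the canonical map $A \to \kappa(s)$), one obtains
\[ \hdr{n}{X_s} \cong H^n(\ten{\Fa^\bullet}{\kappa(s)}{A}). \]

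Next I would argue semi-continuity pointwise. Writing $d^j_s := \ten{d^j}{\kappa(s)}{A}$ and using the rank–nullity theorem,
\[ \Dim{\kappa(s)} H^n(\ten{\Fa^\bullet}{\kappa(s)}{A}) = \rank{\kappa(s)} \Fa^n - \rank{\kappa(s)} d^n_s - \rank{\kappa(s)} d^{n-1}_s. \]
Since $\Fa^n$ is finitely generated projective over $A$, the term $\rank{\kappa(s)} \Fa^n$ is locally constant on $S$. For the middle terms, after trivializing $\Fa^j$ and $\Fa^{j+1}$ on a small open, the locus $\{s : \rank{\kappa(s)} d^j_s \geq r\}$ is cut out by the non-vanishing of the $r \times r$ minors of the matrix representing $d^j$, hence is open in $S$; equivalently, $s \mapsto \rank{\kappa(s)} d^j_s$ is lower semi-continuous. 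Hence the above expression is upper semi-continuous as a function of $s$, proving (i).

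Finally for (ii), summing with alternating signs the ranks of the differentials telescope, giving
\[ \chi(s) = \nsum{n \in \nato} (-1)^n \Dim{\kappa(s)} \hdr{n}{X_s} = \nsum{n \in \nato} (-1)^n \rank{\kappa(s)} \Fa^n, \]
and the right-hand side is locally constant on $S$ since each $\Fa^n$ is locally free of finite rank. I do not anticipate any serious obstacle here: the only non-trivial input is Lemma \ref{cohc}, which provides the key reduction to a bounded complex of projectives whose formation commutes with arbitrary base change; after that everything is the standard matrix-rank argument.
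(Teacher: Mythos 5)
Your proposal is correct and follows essentially the same route as the paper: localize on $S$, invoke Lemma \ref{cohc} to replace de Rham cohomology of the fibers by the cohomology of $\ten{\Fa^{\bullet}}{\kappa(s)}{A}$ for a bounded complex of finitely generated projectives, rewrite the fiber dimension via rank–nullity, and conclude by lower semi-continuity of the rank of the differentials (the paper phrases the minor condition via $\wedge^{r}d^{i}$, you via $r\times r$ minors — the same thing) plus telescoping for the Euler characteristic. No gaps.
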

\begin{proof}
 
Since the question is local on $S$, we may assume $S=\Spec A$, where $A$ is a local ring.  Since all projective modules over a local ring are free, we can pick a complex $\mathcal{F}^{\bullet}$ 
of finitely generated free $A$-modules which furnishes us with the isomorphism in Lemma \ref{cohc}. Let $d^i\colon \mathcal{F}^i \to\mathcal{F}^{i+1}$ be the coboundary maps of $\mathcal{F}^{\bullet}$.  
Then by the previous lemma holds:
\begin{align*}  \label{easycalc}
& \Dim{\kappa(s)}\hdr{n}{X_s}= \Dim{\kappa(s)} \ker \ten{d^{n}}{\kappa(s)}{A}- \Dim {\kappa(s)} \im \ten{d^{n-1}}{\kappa(s)}{A}= \\ \tag{\dag}
& \Dim{\kappa(s)} \ten{\mathcal{F}^{n}}{\kappa(s)}{A}-\Dim {\kappa(s)} \im \ten{d^{n}}{\kappa(s)}{A}- \Dim {\kappa(s)} \im \ten{d^{n-1}}{\kappa(s)}{A}.
\end{align*}
 
$\Dim{\kappa(s)} \ten{\mathcal{F}^{n}}{\kappa(s)}{A}$ is constant in $s$; therefore it amounts to show that the function $\rho\colon S \to \zz$, $s \mapsto \Dim {\kappa(s)} \im \ten{d^{i}}{\kappa(s)}{A}$ is
 lower semi-continuous for each $i \in \nato$, i.e. the set $M_{\rho}=\{s \in S: \rho(s)< r\}$ is closed in $S$ for each $r \in \nato$. 

Consider now the $A$-linear map $\wedge^r d^{i}\colon \bigwedge ^{r}K^{i} \to  \bigwedge ^{r}K^{i+1}$ between free $A$-modules of finite rank induced by $ d^{i}$. Clearly, then we have:  
 $M_{\rho}= \{s \in S: \ten{\wedge^r d^{i}}{\kappa(s)}{A}=0 \}$. Moreover, the  map $\wedge^r d^{i}$ is  given by a matrix with entries in $A$, 
which correspond to the global sections  of the structure sheaf on $S$. Their common  zero locus defines a closed set in $S$. 

The second assertion follows on taking alternating sum of \ref{easycalc} over $j$.    
\end{proof}

The following proposition \textit{loc. cit.} will be further useful.
\begin{prop} \label{fibeq}
  Let $f\colon X \to S$ be a proper morphism of locally noetherian schemes, and  $F$ a coherent sheaf on $X$, flat over $S$. Assume $S$ is reduced.
For each $b \in \nato$ the following conditions are equivalent:
\begin{itemize}
 \item[(i)]  $s \mapsto  \Dim{\kappa(s)} H^{b}(X_s, F_s)$ is a locally constant function on $S$.
 \item[(ii)]  $R^{b}f_*(F)$ is a locally free sheaf on $S$, 
and for all $s \in S$, the natural map $\ten{R^{b}f_*(F)}{\kappa(s)}{\os} \longrightarrow  H^b(X_s,F_s) $ is an isomorphism.
\end{itemize}
\begin{proof}
The claim is a part of \cite[Ch.2, Corollary 2]{mumf}.
\end{proof}
\end{prop}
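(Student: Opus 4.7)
The plan is to prove condition (D) for the lift $\tilde{f}\colon \tilde{X} \to \tilde{S}$ first, and then transport it to $f$ by invoking Remark \ref{ssbasech}, which asserts that (D) is stable under arbitrary base change; by Definition \ref{liftchar0}, $f$ is the base change of $\tilde{f}$ along $S \to \tilde{S}$.

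For part (a) of (D) for $\tilde{f}$, I would apply Proposition \ref{fibeq} with the coherent sheaf $F = \kd{a}{\tilde{X}}{\tilde{S}}$. Since $\tilde{f}$ is smooth, $F$ is locally free on $\tilde{X}$ and in particular flat over the reduced locally noetherian base $\tilde{S}$. The hypothesis that $\tilde{s} \mapsto h^{a,b}(\tilde{s}) := \Dim{\kappa(\tilde{s})} H^{b}(\tilde{X}_{\tilde{s}}, \kd{a}{\tilde{X}_{\tilde{s}}}{\kappa(\tilde{s})})$ is locally constant then gives local freeness of $\hdss{a,b} = R^{b}\tilde{f}_{*}\kd{a}{\tilde{X}}{\tilde{S}}$, together with compatibility of its formation with base change to any fiber.

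For part (b), I would first establish fiberwise degeneration. Since $\tilde{S}$ is flat over $\Spec \zz_{p}$, the element $p$ is a non-zero-divisor in $\ofun{\tilde{S},\tilde{s}}$ for every $\tilde{s}$, so $p$ lies outside every minimal prime; hence any generic point $\eta$ specializing to $\tilde{s}$ satisfies $\mathrm{char}\,\kappa(\eta) = 0$. Over the characteristic-zero field $\kappa(\eta)$, classical Hodge theory (via the Lefschetz principle from $\mathbb{C}$) yields degeneration at $E_{1}$ for $\tilde{X}_{\eta}$, equivalently, by Remark \ref{degeqk}, the equality $b_{n}(\eta) = \sum_{a+b=n} h^{a,b}(\eta)$, where $b_{n}(\tilde{s}) := \Dim{\kappa(\tilde{s})} \hdr{n}{\tilde{X}_{\tilde{s}}}$. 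Combining the upper semi-continuity of $b_{n}$ from Proposition \ref{semicont}, the local constancy of the Hodge numbers, and the Hodge-to-de Rham inequality of Remark \ref{degeqk}, I obtain
\[
\sum_{a+b=n} h^{a,b}(\tilde{s}) \;=\; \sum_{a+b=n} h^{a,b}(\eta) \;=\; b_{n}(\eta) \;\leq\; b_{n}(\tilde{s}) \;\leq\; \sum_{a+b=n} h^{a,b}(\tilde{s}),
\]
forcing equality throughout. By Remark \ref{degeqk} again, the Hodge-de Rham spectral sequence of every fiber $\tilde{X}_{\tilde{s}}$ degenerates at $E_{1}$.

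To upgrade fiberwise to relative degeneration, I would induct on the page $r$. Assume $d_{1} = \dots = d_{r-1} = 0$; then $\hdssr{r}{a}{b} = \hdss{a,b}$ is locally free by part (a), and $d_{r}$ is a morphism of locally free sheaves on $\tilde{S}$. Base-change compatibility of $\hdss{a,b}$ and naturality of the spectral sequence identify the fiber $d_{r}|_{\tilde{s}}$ with the $d_{r}$ of the Hodge-de Rham spectral sequence of $\tilde{X}_{\tilde{s}}$, which vanishes by fiberwise degeneration. Since $\tilde{S}$ is reduced, a morphism of locally free sheaves whose fiber vanishes at every point is itself zero, so $d_{r} = 0$; this closes the induction and gives (b) for $\tilde{f}$. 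Applying Remark \ref{ssbasech} to $S \to \tilde{S}$ then transfers (D) to $f$. The main obstacle I expect is this last bookkeeping step: verifying that the $E_{r}$-differentials commute with restriction to fibers and that reducedness of $\tilde{S}$ suffices for fiberwise vanishing to imply global vanishing. The characteristic-zero input is used only at the generic points, and the flatness of $\tilde{S}$ over $\zz_{p}$ is exactly what guarantees enough such points are available.
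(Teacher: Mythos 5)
Your proposal does not address the statement at hand. Proposition \ref{fibeq} is a cohomology-and-base-change result of Grauert type: for a proper morphism $f\colon X \to S$ of locally noetherian schemes with $S$ reduced and $F$ a coherent sheaf on $X$ flat over $S$, local constancy of $s \mapsto \Dim{\kappa(s)} H^{b}(X_s,F_s)$ is equivalent to $R^{b}f_{*}F$ being locally free together with the base-change maps $\ten{R^{b}f_*(F)}{\kappa(s)}{\os} \to H^b(X_s,F_s)$ being isomorphisms. The paper disposes of this by citing Mumford (\emph{Abelian Varieties}, Ch.~II, \S 5, Corollary 2); the content of that proof is the representation of $\mathbf{R}f_{*}F$ by a finite complex of finitely generated projective modules (the analogue of Lemma \ref{cohc} for a single coherent sheaf), an analysis of the ranks of the coboundary maps, and the use of reducedness of $S$ to pass from constancy of fibre dimensions to local freeness of the relevant cokernels.

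What you have written is instead a proof sketch of Proposition \ref{consthn} (Proposition E of the introduction): that a characteristic-zero lift with locally constant Hodge numbers forces condition (D). That is a different, downstream statement. Worse, your argument explicitly invokes Proposition \ref{fibeq} in its first substantive step (``I would apply Proposition \ref{fibeq} with the coherent sheaf $F = \kd{a}{\tilde{X}}{\tilde{S}}$''), so read as a purported proof of \ref{fibeq} it is circular. Nothing in the proposal supplies the actual content of the proposition: there is no construction of a perfect complex computing the higher direct images universally in the base, no semicontinuity argument for the fibre dimensions of $H^b(X_s,F_s)$, and no point at which the reducedness of $S$ enters in the way the equivalence requires. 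To repair this you should either cite the standard reference, as the paper does, or reproduce the Mumford/Grauert argument for a general coherent $S$-flat sheaf.
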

 
The Mumford's proof of Proposition \ref{fibeq} relies solely on the fact that $H^{b}(\fiber{X}{\Spec B}{\Spec A}/\Spec B, \ten{F}{B}{A})$ with $A$ and $B$ 
as in Proposition \ref{cohc} can be computed as $H^{b}(\ten{\mathcal{F}^{\bullet}}{B}{A})$, where $F^{\bullet}$ is a perfect complex of $A$-modules.
By Proposition \ref{cohc} the same holds true for $\hdr{n}{\fiber{X}{Y}{S}/Y}$. Thus we have:

\begin{corollary} \label{drconst}
Let $f\colon X \to S$ be a smooth proper morphism of locally noetherian schemes, and $S$ be reduced. 
For each $n \in \nato$ the following conditions are equivalent:
\begin{itemize}
 \item[(i)]  $s \mapsto  \Dim{\kappa(s)} \hdr{n}{X_{s}}$ is a locally constant function on $S$.
 \item[(ii)]  $\hdr{n}{X/S}$ is a locally free $\os$-module, 
and for all $s \in S$, the natural map $\ten{\hdr{n}{X/S}}{\kappa(s)}{\os} \longrightarrow \hdr{n}{X_{s}}$ is an isomorphism.
\end{itemize}

\end{corollary}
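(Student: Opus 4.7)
The strategy is to transfer Mumford's cohomology-and-base-change theorem (Proposition \ref{fibeq}) from coherent cohomology to de Rham cohomology using Lemma \ref{cohc}. That lemma supplies precisely the ingredient Mumford actually uses: a perfect complex of finitely generated projective modules computing the cohomology functorially in arbitrary base change. Once this translation is set up, the desired equivalence is obtained by running Mumford's argument verbatim, with $\hdr{n}{X/S}$ in place of $R^{b}f_{*}(F)$.

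The implication (ii) $\Rightarrow$ (i) is immediate: if $\hdr{n}{X/S}$ is locally free of finite rank and the natural map $\ten{\hdr{n}{X/S}}{\kappa(s)}{\os} \to \hdr{n}{X_{s}}$ is an isomorphism for every $s$, then $\Dim{\kappa(s)} \hdr{n}{X_{s}}$ equals the local rank of $\hdr{n}{X/S}$, which is a locally constant function on $S$.

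For (i) $\Rightarrow$ (ii), I would work locally on $S$ and reduce to the case $S = \Spec A$ with $A$ a local Noetherian ring, using that (ii) is itself a local property. Lemma \ref{cohc} then produces a finite complex $\mathcal{F}^{\bullet}$ of finitely generated projective -- hence free, since $A$ is local -- $A$-modules together with a natural isomorphism $\hdr{n}{\fiber{X}{\Spec B}{S}/\Spec B} \cong H^{n}(\ten{\mathcal{F}^{\bullet}}{B}{A})$ for every $A$-algebra $B$. Applying the elementary rank computation from the proof of Proposition \ref{semicont} to $\mathcal{F}^{\bullet}$ expresses $\Dim{\kappa(s)} \hdr{n}{X_{s}}$ as $\mathrm{rk}\,\mathcal{F}^{n} - \Dim{\kappa(s)} \im \ten{d^{n-1}}{\kappa(s)}{A} - \Dim{\kappa(s)} \im \ten{d^{n}}{\kappa(s)}{A}$. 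Combining this with the locally constant Euler characteristic of Proposition \ref{semicont}(ii) and with the upper semi-continuity of each $\Dim{\kappa(s)} \im \ten{d^{i}}{\kappa(s)}{A}$ established in the same proof, the hypothesis that $\Dim{\kappa(s)} \hdr{n}{X_{s}}$ is locally constant on $S$ forces the fiber rank of each of the two adjacent coboundaries $d^{n-1}$ and $d^{n}$ to be locally constant on $\Spec A$.

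The main obstacle is then converting locally constant fiber rank of these coboundary maps into genuine local freeness of $H^{n}(\mathcal{F}^{\bullet})$ together with base change compatibility. This is exactly where the reducedness of $S$ enters: over a reduced Noetherian ring, a finitely generated module with locally constant fiber dimension is locally free, and a map between free modules whose fiber rank is locally constant has image and kernel that split off as direct summands. Applying this to $d^{n-1}$ and $d^{n}$ shows that $H^{n}(\mathcal{F}^{\bullet})$ is a locally free $A$-module whose formation commutes with arbitrary base change $A \to B$, and under the functorial identification of Lemma \ref{cohc} this is precisely statement (ii). I expect no substantive difficulty beyond careful bookkeeping -- most likely a downward induction, again after Mumford, to ensure that the single hypothesis on $\Dim{\kappa(s)} \hdr{n}{X_{s}}$ really does control both boundary ranks simultaneously rather than only their alternating sum.
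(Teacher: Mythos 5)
Your proposal follows exactly the route the paper takes: the paper's entire proof is the observation that Mumford's argument for Proposition \ref{fibeq} uses only the existence of a perfect complex computing the cohomology functorially in arbitrary base change, which Lemma \ref{cohc} supplies for $\hdr{n}{\fiber{X}{Y}{S}/Y}$, so the argument transfers verbatim; you simply spell out the details of that transferred argument, and they are correct. One small remark: the downward induction you worry about at the end is not needed, since the single hypothesis makes $\Dim{\kappa(s)}\im \ten{d^{n-1}}{\kappa(s)}{A}+\Dim{\kappa(s)}\im \ten{d^{n}}{\kappa(s)}{A}$ locally constant while each summand is lower semi-continuous, which already forces each to be locally constant separately.
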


The constancy of Hodge numbers in fibers of the lift in zero characteristic turns out to be a sufficient for the condition (D) to be met:   
\begin{prop} \label{consthn}
Let $f\colon X \to S$ be a smooth proper morphism. 
Suppose that there is a lift of $X$ in zero characteristic, $\tilde{f}\colon \tilde{X} \to \tilde{S}$  such that 
$\tilde{X}$ and $\tilde{S}$ are locally noetherian schemes, $\tilde{f}$ is proper and smooth, and $\tilde{S}$ reduced. 

 Further assume the functions $\tilde{s} \mapsto \Dim{\kappa(\tilde{s})} H^{b}(\tilde{X}_{\tilde{s}}, \kd{a}{\tilde{X}_{\tilde{s}}}{\kappa(\tilde{s})})$
are locally constant on $\tilde{S}$ for all $a,b \in \nato$.

Then $f$ satisfies condition (D).
\end{prop}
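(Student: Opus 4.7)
My plan is to first establish condition (D) for the lift $\tilde{f}$ on $\tilde{S}$ and then transport it to $f$ via Remark \ref{ssbasech}, since the Cartesian square in Definition \ref{liftchar0} presents $f$ as the base change of $\tilde{f}$ along $S\to\tilde{S}$.

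For part (a) of (D) over $\tilde{S}$, the sheaves $\kd{a}{\tilde{X}}{\tilde{S}}$ are $\ofun{\tilde{S}}$-flat by smoothness of $\tilde{f}$, so Proposition \ref{fibeq} applied to them on the reduced scheme $\tilde{S}$ converts the assumed local constancy of Hodge numbers directly into local freeness of every $\hdss{a}{b}=R^{b}\tilde{f}_{*}\kd{a}{\tilde{X}}{\tilde{S}}$. For part (b), I would first show that $\tilde{s}\mapsto b_{n}(\tilde{X}_{\tilde{s}}):=\Dim{\kappa(\tilde{s})}\hdr{n}{\tilde{X}_{\tilde{s}}}$ is locally constant on $\tilde{S}$ and then promote fibrewise equality to genuine global degeneration by a rank comparison. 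At any generic point $\tilde{\eta}$ of the reduced $\zz_{p}$-flat scheme $\tilde{S}$, the local ring $\ofun{\tilde{S},\tilde{\eta}}=\kappa(\tilde{\eta})$ is a field receiving a ring map from $\zz_{p}$ which is forced to be an injection by flatness, hence $\kappa(\tilde{\eta})$ has characteristic zero. The classical Hodge--de Rham degeneration for smooth proper schemes over a characteristic zero field turns the inequality of Remark \ref{degeqk} into equality at $\tilde{\eta}$: $b_{n}(\tilde{X}_{\tilde{\eta}})=\sum_{a+b=n}h^{a,b}(\tilde{X}_{\tilde{\eta}})$. For any specialization $\tilde{s}$ of $\tilde{\eta}$, Remark \ref{degeqk} combined with the hypothesis of local constancy of Hodge numbers gives $b_{n}(\tilde{X}_{\tilde{s}})\leq b_{n}(\tilde{X}_{\tilde{\eta}})$, while Proposition \ref{semicont}(i) supplies the reverse inequality. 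Hence $b_{n}$ is locally constant on $\tilde{S}$.

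Corollary \ref{drconst} then yields that $\hdr{n}{\tilde{X}/\tilde{S}}$ is a locally free $\ofun{\tilde{S}}$-module with the expected fibrewise base change. Because each $\hdssinf{a,}{b}$ is a subquotient of the locally free $\hdss{a}{b}$ and the $\hdssinf{a,}{b}$ form the associated graded of a filtration on $\hdr{n}{\tilde{X}/\tilde{S}}$, one has
\[
\mathrm{rk}(\hdr{n}{\tilde{X}/\tilde{S}})=\sum_{a+b=n}\mathrm{rk}(\hdssinf{a,}{b})\leq\sum_{a+b=n}\mathrm{rk}(\hdss{a}{b}),
\]
with equality if and only if the Hodge--de Rham spectral sequence degenerates at $E_{1}$. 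Both sides equal $\sum_{a+b=n}h^{a,b}(\tilde{X}_{\tilde{s}})$ by the previous step, so equality holds, proving (b) for $\tilde{f}$. Remark \ref{ssbasech} then transports condition (D) from $\tilde{f}$ to $f$. The main obstacle is the local constancy of $b_{n}$: one must exploit flatness of $\tilde{S}$ over $\Spec\zz_{p}$ to locate characteristic-zero generic points, invoke classical Hodge theory there, and then sandwich $b_{n}$ between Proposition \ref{semicont}(i) and the Hodge-rank inequality of Remark \ref{degeqk}; everything else is bookkeeping with Propositions \ref{fibeq} and \ref{semicont}, Corollary \ref{drconst}, and Remark \ref{ssbasech}.
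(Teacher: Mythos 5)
Your argument for the local constancy of $b_{n}$ is essentially the paper's: the paper passes to the open dense locus $S'=\tilde{S}\times_{\Spec\zz_{p}}\Spec\qp$ and you pass to the generic points of $\tilde{S}$ (which lie in that locus, reducedness giving $\ofun{\tilde{S},\tilde{\eta}}=\kappa(\tilde{\eta})$ and flatness over $\zz_{p}$ giving characteristic zero), but the sandwich is identical: characteristic-zero degeneration turns the inequality of Remark \ref{degeqk} into an equality at the generic point, constancy of the Hodge numbers bounds $b_{n}(\tilde{X}_{\tilde{s}})$ from above by $b_{n}(\tilde{X}_{\tilde{\eta}})$, and upper semicontinuity (Proposition \ref{semicont}(i)) bounds it from below. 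Part (a) via Proposition \ref{fibeq} and the transport of (D) to $f$ via Remark \ref{ssbasech} also match the paper.

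Where you genuinely diverge is the last step, deducing degeneration at $E_{1}$ from the numerical coincidence, and this is the one place where your write-up has a gap. You assert that
\(\mathrm{rk}(\hdr{n}{\tilde{X}/\tilde{S}})=\sum_{a+b=n}\mathrm{rk}(\hdssinf{a,}{b})\leq\sum_{a+b=n}\mathrm{rk}(\hdss{a}{b})\)
holds ``with equality if and only if the spectral sequence degenerates at $E_{1}$,'' but the direction you need (equality $\Rightarrow$ degeneration) is exactly what must be proved, and it is not a formality: the $\hdssinf{a,}{b}$ are only subquotients of the $\hdss{a}{b}$ and need not be locally free, so ``rank'' must be read fibrewise at generic points, and a subquotient having full generic rank does not by itself mean the differentials vanish as maps of sheaves. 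To close this you must argue that equality of the sums forces $\mathrm{rk}_{\tilde{\eta}}\,\hdssinf{a,}{b}=\mathrm{rk}\,\hdss{a}{b}$ termwise, hence that $d_{1}\otimes\kappa(\tilde{\eta})=0$ at every generic point, and then use that $\tilde{S}$ is reduced and the $E_{1}$-terms are locally free (so the Hom-sheaf is torsion-free) to conclude $d_{1}=0$ identically, and iterate for $d_{r}$. The paper avoids this by a different device: it localizes and reduces to $\tilde{S}=\Spec R$ with $R$ Artinian local (following Katz) and replaces ranks by lengths (Remark \ref{degeq}), where strict additivity of length on proper subquotients does the work even without reducedness. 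Your route is viable precisely because $\tilde{S}$ is reduced, but as written the key implication is asserted rather than proved.
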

\begin{proof}
It suffices to prove the assertion of the proposition for $\tilde{S}$ is connected. 
We complete the right column of diagram \ref{dg} with Cartesian squares by taking generic fibers of $\tilde{X}$, $\tilde{S}$, $\Spec \zz_{p}$:
\begin{displaymath} \label{dgext}
\xymatrix{
X \ar[d]^{f} \ar[r] & \tilde{X} \ar[d]^{\tilde{f}} & X' \ar@_{(->}[l] \ar[d]^{f'}\\
S   \ar[r] &  \tilde{S} \ar[d]^{flat} &  S' \ar@_{(->}[l]_{\jmath} \ar[d]\\
 &  \Spec \zz_{p} & \Spec \qp \ar@_{(->}[l] 
}
\end{displaymath}
Note that $\basechg{\kd{a}{\tilde{X}}{\tilde{S}}}{\tilde{s}} \cong \kd{a}{\tilde{X}_{\tilde{s}}}{\kappa(\tilde{s})}$ (e.g. \cite[1.3]{luc}) for all $a \in \nato$.

Since the open immersions are preserved by a base change, $\jmath\colon S' \hookrightarrow \tilde{S}$ is an open immersion again; moreover $\jmath$ is dominant  by \cite[IV/2, Proposition 2.3.7(i)]{EGA} because $\Spec  \qp \hookrightarrow \Spec \zz_{p}$
is dominant and quasi-compact.  

By Prop. \ref{fibeq} $R^{b}\tilde{f}_{*}(\kd{a}{\tilde{X}}{\tilde{S}})$ are locally free
 modules of finite rank for all $a,b \in \nato$. In particular, their formation commutes with an arbitrary base change.

Thus  $R^{b}f'_{*}(\kd{a}{X'}{S'}) \isoeq \jmath^{*}R^{b}\tilde{f}_{*}(\kd{a}{\tilde{X}}{\tilde{S}})$ are locally free  $\ofun{S'}$-modules of the same rank. 
Note that the last isomorphism is true in general since $\jmath$ is a flat morphism.


As $S'$ is of zero characteristic  the corresponding Hodge-de Rham sequence degenerates at $E_1$ by \cite{dlg}. 
 On the other hand, by Proposition \ref{semicont} the function $\tilde{s} \mapsto \Dim{\kappa(\tilde{s})}\hdr{n}{X_{\tilde{s}}}$   is upper
semi-continuous on $\tilde{S}$.
  
Let  $s' \in \im \jmath$. As Hodge-de Rham sequence degenerates at $E_1$ in zero characteristic, for $K= \kappa(s')$  holds the equality of Remark $\ref{degeqk}$. 

Since  by Prop. \ref{fibeq} the Hodge numbers are constant on $\tilde{S}$, we have $\Dim{\kappa(\tilde{s})}\hdr{n}{\tilde{X}_{\tilde{s}}} \leq \Dim{\kappa(s')}\hdr{n}{\tilde{X}_{s'}}$.
On the other hand  $\im \jmath$ is open and dense in $\tilde{S}$, so Proposition \ref{semicont} forces the equality for all $n \in \nato$.
Thus  $\tilde{s} \mapsto  \Dim{\kappa(\tilde{s})} \hdr{n}{\tilde{X}_{\tilde{s}}}$ is a constant function on $\tilde{S}$.

Thus Prop. \ref{drconst} implies that  $\hdr{n}{\tilde{X}/\tilde{S}}$ is locally free $\ofun{\tilde{S}}$-module.

In fact, one can test the  degeneration of Hodge-de Rham sequence locally on $\tilde{S}$, moreover it is sufficient to prove it for $\tilde{S}= \Spec R$,
 where $R$ is a local Artinian ring, see the proof of \cite[Proposition 2.3.2]{katz}.

Note that in this case for the only point $\tilde{s} \in \tilde{S}$ holds $\lgth \hdr{n}{\tilde{X}/\tilde{S}}=\Dim{\kappa(\tilde{s})} \hdr{n}{\tilde{X}_{\tilde{s}}} \cdot \lgth R$ and
$\lgth R^{b}\tilde{f}_{*}(\kd{a}{\tilde{X}}{\tilde{S}})=\Dim{\kappa(\tilde{s})} H^b(\tilde{X}_{\tilde{s}},\kd{a}{\tilde{X}_{\tilde{s}}}{\kappa(\tilde{s})} ) \cdot \lgth R$

Thus, by of Remark \ref{degeq}, the  Hodge-de Rham sequence is degenerate for  $\tilde{f}\colon \tilde{X} \to \tilde{S}$.

We see that $\tilde{f}\colon \tilde{X} \to \tilde{S}$ satisfies condition (D) except for $\tilde{S}$ is not in characteristic $p$. But Remark \ref{ssbasech}
remains true also in this case, hence $f\colon X \to S$ also satisfies condition (D). 

 \end{proof}

\subsection{Applications of Proposition \ref{consthn}} 
\subsection*{K3 schemes over $S$}
 
First we recall the definitions (see \cite{rizov}).
\begin{definition}
Let $K$ be an arbitrary field, and $Y$ be an arbitrary base scheme. 

i) A smooth proper geometrically connected scheme $X$ over $K$ of  dimension $2$ is called a \textit{K3 surface} if $\kd{2}{X}{K} \cong \ofun{X}$, and $H^1(X, \ofun{X})=0$.

ii) A \textit{polarization on a K3 surface} $X$ is a global section $\lambda \in \mathrm{Pic}_{X/K}(K)$
that over $\bar{K}$ is the class of an ample line bundle $\mathcal{L}_{\bar{K}}$ . The degree
of  $\mathcal{L}_{\bar{K}}$, which is by definition the selfintersection index $(\mathcal{L}_{\bar{K}},\mathcal{L}_{\bar{K}})_{X_{\bar{K}}}$, 
is called the \textit{polarization degree} of  $\lambda$. A polarization degree is always an even number.

iii) A \textit{K3 scheme} over $Y$ is a scheme $X$ together with proper, smooth morphism  $f\colon X \to Y$ whose geometric fibers are K3 surfaces. 

iv)  A \textit{K3 space} over a scheme $Y$ is an algebraic space $X$ together with a proper and smooth
morphism $f\colon X \to Y$  such that there is an  \'{e}tale cover $Y'\to Y$ of $Y$ for which $f_{Y'}\colon \fiber{X}{Y'}{Y} \to Y'$ is a K3 scheme over $Y'$.

v) A \textit{polarization on a K3 space}  $f\colon X \to Y$ is a
global section $\lambda \in \mathrm{Pic}_{X/Y}(Y)$  such that for every geometric point $\bar{y}$ of $Y$ the section $\lambda_{\bar{y}} \in\mathrm{Pic}_{X_{\bar{y}}/\kappa(\bar{y})}(\kappa(\bar{y}))$
is a polarization of $X_{\bar{y}}$, see \cite[section 1.3.1]{rizov} for a definition of relative Picard functor $\mathrm{Pic}_{X/Y}$.
\end{definition}

We recall also a well-known fact about the Hodge diamond of a K3 surface $X$, see \cite[Proposition 1.1]{k3del}: 
\begin{remark} \label{k3hn}
For the Hodge numbers of $X/K$ holds:\\
$h^{1,0} = h^{0,1} = h^{2,1} = h^{1,2} = 0$;\\
$h^{0,0} = h^{2,0} = h^{0,2} = h^{2,2} = 1$; \\
$h^{1,1} = 20$.

Note that in particular they do not depend on the field $K$.
\end{remark}

J. Rizov constructs in \textit{loc.cit.} a separated Deligne-Mumford stack $\mathcal{M}_{2d}$ fibered over $\catsch$ of polarized pairs $(f\colon X \to Y, \lambda)$, where $X$ 
is a K3 space over $Y$, and $\lambda$ is a  polarization of the (constant) degree $2d$ on it. Moreover he shows in  \cite[Proposition 1.4.15]{rizov} that 
the moduli stack $\mathcal{M}_{2d}$ is smooth of relative dimension $19$ over $\zz[\frac{1}{2d}]$.

This result establishes the following method of the construction of a lift in zero characteristic:
\begin{corollary}
 Let $(f\colon X \to S, \lambda)$ be a polarized K3 scheme with a polarization of degree $2d$. Assume $p \nmid 2d$ .

Then $f$ satisfies condition (D).
\end{corollary}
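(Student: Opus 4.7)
The plan is to apply Proposition \ref{consthn} after producing a lift of $X$ to characteristic zero whose Hodge numbers are fiberwise constant. Since Remark \ref{fpqcdeg} tells us that condition (D) is preserved by fpqc descent, it is enough to produce such a lift after any fpqc base change of $S$; this affords us the freedom to pass to an étale cover on which the polarized family is pulled back from a universal one.

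First I would invoke the moduli-theoretic input. By J. Rizov's construction (already used just above), the moduli stack $\mathcal{M}_{2d}$ of polarized K3 spaces of degree $2d$ is a separated smooth Deligne--Mumford stack of relative dimension $19$ over $\Spec \zz[1/2d]$. The hypothesis $p \nmid 2d$ ensures that $\zz[1/2d]$ admits ring maps to both $\fp$ and $\zz_p$, so that $\mathcal{M}_{2d,\fp}$ is defined and the polarized family $(f,\lambda)$ yields a classifying $1$-morphism $S \to \mathcal{M}_{2d,\fp}$.

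Next I would choose an affine étale cover $U \to \mathcal{M}_{2d}$ (possible since $\mathcal{M}_{2d}$ is Deligne--Mumford), with $U$ smooth and affine over $\zz[1/2d]$, and denote by $\mathcal{X}_U \to U$ the pullback of the universal K3 family. Forming $S' := \fiber{S}{U}{\mathcal{M}_{2d}}$ yields an étale, hence fpqc, cover $S' \to S$, with $X' := \fiber{X}{S'}{S} \cong \fiber{\mathcal{X}_U}{S'}{U}$, and by Remark \ref{fpqcdeg} it suffices to verify condition (D) for $f'\colon X' \to S'$. I would then take the lift to be $\tilde{S} := \fiber{U}{\zz_p}{\zz[1/2d]}$ with $\tilde{X} := \fiber{\mathcal{X}_U}{\tilde{S}}{U}$ and $\tilde{f}\colon \tilde{X} \to \tilde{S}$ the base change of $\mathcal{X}_U \to U$. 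Then $\tilde{S}$ is smooth over $\Spec \zz_p$, hence flat, reduced, and locally noetherian; $\tilde{f}$ is proper and smooth; and the natural factorization $S' \to U_{\fp} \hookrightarrow \tilde{S}$ makes the square Cartesian since
\[ \fiber{\tilde{X}}{S'}{\tilde{S}} \;=\; \fiber{\mathcal{X}_U}{S'}{U} \;=\; X'. \]
All geometric fibers of $\tilde{f}$ are K3 surfaces over fields, so by Remark \ref{k3hn} their Hodge numbers take the universal values $h^{0,0}=h^{2,0}=h^{0,2}=h^{2,2}=1$, $h^{1,1}=20$, and $h^{a,b}=0$ otherwise, independently of the residue field. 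Hence the Hodge number functions are constant (a fortiori locally constant) on $\tilde{S}$, and Proposition \ref{consthn} yields condition (D) for $f'$; Remark \ref{fpqcdeg} then transfers it to $f$.

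The only real obstacle I anticipate is bookkeeping: verifying that the diagram in Definition \ref{liftchar0} is genuinely Cartesian requires chasing the factorization $S' \to U_{\fp} \hookrightarrow \tilde{S}$ through the two fiber products, but this is routine. All the conceptual weight is carried by Rizov's smoothness theorem for $\mathcal{M}_{2d}$ together with the field-independence of K3 Hodge numbers; once those are in hand, Proposition \ref{consthn} does the rest.
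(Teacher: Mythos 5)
Your proposal is correct and follows essentially the same route as the paper: pull the family back along an étale cover of Rizov's moduli stack, base change to $\Spec \zz_p$ to obtain the lift, invoke the field-independence of K3 Hodge numbers (Remark \ref{k3hn}), and conclude via Proposition \ref{consthn} and Remark \ref{fpqcdeg}. The one point you gloss over is that the universal object over $\mathcal{M}_{2d}$ is a K3 \emph{space}, i.e.\ an algebraic space, and its pullback $\mathcal{X}_U \to U$ to your chosen étale cover $U$ is a priori still only a K3 space rather than a scheme; Proposition \ref{consthn} and Definition \ref{liftchar0} are stated for schemes. The paper handles this by passing to a \emph{second} étale cover $M''_{2d} \to M'_{2d}$ over which the universal family becomes an honest K3 scheme (such a cover exists by the very definition of a K3 space), and then running the fpqc-descent argument of Remark \ref{fpqcdeg} once more. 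This is an easy repair of your argument, not a conceptual obstruction, but it is a step you should make explicit.
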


\begin{proof}
Denote by  $\mathcal{M}^{p}_{2d}= \ten{\mathcal{M}_{2d}}{\zz_{p}}{\zz[\frac{1}{2d}]}$ which is separated, smooth, in particular flat, Deligne-Mumford stack over $\zz_{p}$.
In particular, one has a  \'{e}tale surjection $M_{2d}' \to \mathcal{M}^{p}_{2d}$ with $M_{2d}'$ is a smooth scheme over $\zz_{p}$.

Let $\mathfrak{X} \to \mathcal{M}^{p}_{2d}$ be the universal K3 space with a polarization of degree $2d$. 
 $(f\colon X \to S, \lambda)$  corresponds to the stack morphism $F\colon S \to \mathcal{M}^{p}_{2d}$ such that
one has the following Cartesian diagram: 

\begin{displaymath} \label{univ}
\xymatrix{
X \ar[d]^{f} \ar[r] & \mathfrak{X} \ar[d]  \\
S  \ar[r]^{F} & \mathcal{M}^{p}_{2d}
}
\end{displaymath}

Let $X':=\fiber{X}{M'_{2d}}{\mathcal{M}^{p}_{2d}}$, $S':=\fiber{S}{M'_{2d}}{\mathcal{M}^{p}_{2d}}$, $\mathfrak{X}':=\fiber{\mathfrak{X}}{M'_{2d}}{\mathcal{M}^{p}_{2d}}$
be the \'{e}tale covers of $X$ resp. $S$ resp. $\mathfrak{X}$.

By the base change by $M'_{2d} \to \mathcal{M}^{p}_{2d}$ we obtain the following Cartesian diagram:

\begin{displaymath} \label{univn}
\xymatrix{
X' \ar[r] \ar[d]_{f_{M'_{2d}}} &   \mathfrak{X}' \ar[d]\\
S'  \ar[r]^{F_{M'_{2d}}} &  M'_{2d}
}
\end{displaymath}

Note that  of the diagonal  $\mathcal{M}^{p}_{2d} \to \mathcal{M}^{p}_{2d} \times \mathcal{M}^{p}_{2d}$ of a Deligne-Mumford stack being a schematic morphism 
implies that  $X' \to S'$  is a K3 scheme, and $\mathfrak{X}' \to M'_{2d}$ is a K3 space.

Note that by Remark \ref{fpqcdeg} it suffices to test condition (D) for a K3 scheme $X' \to S'$.

Now by the definition of K3 space we can choose an \'{e}tale cover $M''_{2d} \to M'_{2d}$  such that  $\mathfrak{X}'':=\fiber{\mathfrak{X}'}{M''_{2d}}{M'_{2d}} \to  M''_{2d}$ is a K3 scheme.

By the same reasoning as above  we can test  condition (D) for a K3 scheme  $X'':=\fiber{X'}{M''_{2d}}{M'_{2d}}  \to S'':=\fiber{S'}{M''_{2d}}{M'_{2d}}$. 

But $\mathfrak{X}'' \to M''_{2d}$ is a lift in zero characteristic  of $X''/S''$ in sense of Definition \ref{liftchar0} which clearly  satisfies the assumptions
of Proposition \ref{consthn}. 
This proves the claim. 
\end{proof}

\begin{remark}
By utilizing the existence and the regularity properties of the moduli stack  of polarized abelian schemes over $\zz$, see e.g \cite{djong}, one can show in a similar vein that 
an polarized abelian scheme $X$ over $S$ under certain restrictions also satisfies condition (D).
In fact, the polarization assumption can be abandoned, and condition (D) holds  true for an arbitrary abelian scheme over $S$, see \cite[§ 2.5. Prop 2.5.2]{bbm}. 
 \end{remark}

\subsection*{Lift over $\Spec W(k)$}

Here we list some examples of proper smooth schemes $X$ over $k$  where condition (D) holds, i.e.,  $S=\Spec k$ and $\tilde{S} = \Spec W(k)$:  

\begin{enumerate}
\item[(1)] $X/k$  is a \textit{proper smooth curve}:  In fact, there is a proper smooth lift $\tilde{X} \to \Spec W(k)$, since the obstructions lie in $H^2(X,{\Omega}^{\vee}_{X/k})$,
 and  $H^2(X,\ofun{X})$, see e.g. \cite[III/1, Th\'{e}or\`{e}me 5.1.4]{EGA}. 

By Serre duality for Hodge numbers  holds  $h^{0,0} = h^{1,1}=t$, $h^{1,0} = h^{0,1}=g$, and they are the same on the generic and the special fiber   
since the Euler characteristic $\chi= 2t-2g$ is constant on  $\tilde{S}$ by Proposition \ref{semicont}(ii), and $t$, which is the number of  geometric components,
is also constant by \cite[IV/3, Proposition 15.5.9(ii)]{EGA}.

\item[(2)] $X/k$ is a \textit{K3 surface}:  By \cite{k3del} there exists a lift to a K3 scheme over $W(k)$, 
and the Hodge numbers do not depend on $X$ and on the ground field. 

\item[(3)] $X/k$ is an \textit{Enriques surface} if $\mathrm{char}(k) \neq 2$. In this case $H^{2}(X , \ofun{X})=0$ and it has an \'{e}tale cover by a K3 surface $Y$.
By Serre duality  $H^2(X,{\Omega}^{\vee}_{X/k})= 0$  if and only if $H^0(X,\ten{\Omega_{X/k}}{\omega_X}{})=0$.
The last equality is true since it holds for an \'{e}tale cover $Y$, see \cite[Theorem 1.1.]{lang}. Thus a lift exists for the same reason as in (1).

Note that similar to K3 surfaces the Hodge numbers for Enriques surfaces over $K$ in $\mathrm{char}(K) \neq 2$ do not depend on any choice. 

\item[(4)] $X/k$ is a \textit{smooth complete intersection}  in $\mathbb{P}_k^n$, see \cite[Expos\'{e} XI, Th\'{e}or\`{e}me 1.5]{sga7}.  
%

\item[(5)] $X/k$ is a \textit{smooth proper toric variety}, see \cite{bli}.
\end{enumerate}

\nocite{*}
\bibliographystyle{alpha}
\bibliography{doctor}
\end{document}